\newcommand{\bbA} {\mathbb{A}}
\newcommand{\bbC} {\mathbb{C}}
\newcommand{\bbF} {\mathbb{F}}
\newcommand{\bbK} {\mathbb{K}}
\newcommand{\bbQ} {\mathbb{Q}}
\newcommand{\bbZ} {\mathbb{Z}}
\newcommand{\calC} {\mathcal{C}}
\newcommand{\calD} {\mathcal{D}}
\newcommand{\calF} {\mathcal{F}}
\newcommand{\calH} {\mathcal{H}}
\newcommand{\calO} {\mathcal{O}}
\newcommand{\calV} {\mathcal{V}}
\newcommand{\frkh}{\mathfrak{h}}
\newcommand{\ve}{\varepsilon}
\newcommand{\diag}{\mathop{\operatorname{diag}}}
\newcommand{\sign}{\mathop{\operatorname{sign}}}
\newcommand{\Ob}{\operatorname{Ob}}
\newcommand{\SU}{\mathop{\operatorname{SU}}}
\newcommand{\Det}{\mathop{\operatorname{det}} \nolimits}
\newcommand{\Hom}{\mathop{\operatorname{Hom}} \nolimits}
\newcommand{\Ker}{\mathop{\operatorname{Ker}}}
\newcommand{\Lim}{\mathop{\operatorname{lim}} \nolimits}
\newcommand{\Rad}{\mathop{\operatorname{Rad}} \nolimits}
\newcommand{\LHS}{\operatorname{LHS}}
\newcommand{\RHS}{\operatorname{RHS}}
\newcommand{\vir}{\mathcal{V}ir}
\newcommand{\vqt}{\vir_{q,t}}
\newcommand{\vqtp}{\vir_{q,t}^{+}}
\newcommand{\cls}[1]{{\bf Class} $#1$}
\newcommand{\dd}{\oalign{\raisebox{0.4em}[0.5em][0em]{$\circ$} \cr
                         \raisebox{0.15em}[0.5em][0em]{$\circ$}}}
\newcommand{\nord}[1]{\dd #1 \dd}
\newcommand{\pair}[2]{\left< #1 , #2 \right>}
\theoremstyle{plain}
 \newtheorem{thm}{Theorem}[section]
 \newtheorem{lem}[thm]{Lemma}
 \newtheorem{prop}[thm]{Proposition}
 \newtheorem*{thm*}{Theorem}
\theoremstyle{definition}
 \newtheorem{dfn}[thm]{Definition}
 \newtheorem{fct}[thm]{Fact}
 \newtheorem{rmk}[thm]{Remark}
\theoremstyle{remark}
\numberwithin{equation}{section}
\begin{document}


\title{Norm of the Whittaker vector of the deformed Virasoro algebra}

\author{Shintarou Yanagida}
\address{Research Institute for Mathematical Sciences,
Kyoto University, Kyoto 606-8502, Japan}
\email{yanagida@kurims.kyoto-u.ac.jp}

\date{November 3, 2014}


\begin{abstract}
We give a proof of the recursive formula on the norm of Whittaker vector of 
the deformed Virasoro algebra, which is an analog of the one for 
the Virasoro Lie algebra proposed by Al.~Zamolodchikov. 
Our formula gives a proof of the pure SU(2) 5d AGT relation 
proposed by Awata and Yamada.
We also give a summary of the structures of the deformed Virasoro algebra 
and the fundamental properties of the Verma module.
\end{abstract}

\maketitle
\tableofcontents

\section{Introduction}

\subsection{}
Let $q$ and $t$ be indeterminates and set $\bbF:=\bbQ((q,t)$.
The deformed Virasoro algebra $\vqt$ was introduced in \cite{SKAO} 
as the (topological) algebra over $\bbF$ generated by the current 
$T(z)=\sum_{n\in\bbZ}T_n z^{-n}$ satisfying the relation
$$
 f(w/z) T(z) T(w) -T(w)T(z) f(z/w) = 
 -\dfrac{(1-q)(1-t^{-1})}{1-q/t}
 \Bigl(\delta( q t^{-1} w/z) - \delta(t q^{-1} w/z)\Bigr)
$$
with $f(x) := \exp(\sum_{n\ge1}(1-q^n)(1-t^{-n})/(1+q^n/t^n)\cdot x^n/n)$ 
and $\delta(x) := \sum_{x \in \bbZ} x^n$.

The algebra $\vqt$ has the Verma module, an analogue of the Verma modules of 
triangular decomposed Lie algebras.
It is $\bbZ_{\ge0}$-graded and generated by a highest weight vector.
The Verma module with highest weight $h$ is denoted by $M(h)$,
and its $n$-th grading part is denoted by $M(h)^{(n)}$.
The highest weight vector of $M(h)$ will be denoted by $1_{h}$.
The highest weight condition can be written as 
$T_0.1_h = h 1_{h}$ and $T_n.1_{h} = 0$ ($n \ge 1$).
As in the triangulated Lie algebra,
we have the contravariant form $\pair{\cdot}{\cdot}: M(h) \otimes_{\bbF} M(h) \to \bbF$
on $M(h)$.

In \cite{AY:2010}, Awata and Yamada proposed a conjecture 
which state that the instanton part of Nekrasov's five-dimensional pure $\SU(2)$ 
partition function \cite{N:2003} 
coincides with the norm $\pair{w}{w}$ of the Whittaker vector $w$ 
of the Verma module $M(h)$.
The element $w$ is an analogue of the Whittaker vector \cite{K:1978} for finite dimensional 
Lie algebras.

The Whittaker vector $w$ is an element of the completion 
$\widehat{M}(h)$ of the Verma module $M(h)$ satisfying  
$T_1.w=\xi w$ and $T_n.w =0$ for any $n\in \bbZ_{>1}$.
Here $\xi$ is an indeterminate.

Now the conjecture in  \cite{AY:2010} can be stated as 
\begin{align}\label{eq:qagt}
 \pair{w}{w} \stackrel{?}{=} 
 \sum_{\lambda,\mu} (\xi^2 t/q)^{|\lambda|+|\mu|} Z_{\lambda, \mu}(Q,q,t).
\end{align}
In the right hand side, 
the summation is taken over all the partitions $\lambda$ and $\mu$.
The parameter $Q$ is defined by the relation $h= Q+Q^{-1}$.
We also used the expression
\begin{align*}
&Z_{\lambda,\mu}(Q,q,t) :=
 \dfrac{1}
       {N_{\lambda,\lambda}(1)N_{\mu,\mu}(1)
        N_{\lambda,\mu}(Q)N_{\mu,\lambda}(Q^{-1})},
\\
&N_{\lambda,\mu}(Q) :=
 \prod_{(i,j)\in\mu}    (1-Q q^{\lambda_i-j}t^{\mu'_j-i+1})
 \prod_{(i,j)\in\lambda}(1-Q q^{-\mu_i+j-1}t^{-\lambda'_j+i}),
\end{align*}
where $(i,j) \in \lambda$ means $1\le i \le \ell(\lambda)$ and $1\le j \le \lambda_i$,
and $\lambda'$ is the transpose of the partition $\lambda$.
The right hand side in \eqref{eq:qagt} is the instanton part of 
Nekrasov's five-dimensional pure $\SU(2)$ partition function.

This conjecture is a natural extension of the degenerate version \cite{G:2008} 
of the four-dimensional AGT (Alday-Gaiotto-Tachikawa) relation \cite{AGT:2010}.
The original AGT relation claims that the instanton part of 
Nekrasov's four-dimensional $\SU(2)$ partition function with $N_f=4$ 
anti-fundamental hypermultiplets \cite{N:2003} coincides with 
the conformal block of the Virasoro algebra.

The four-dimensional degenerated version of the AGT relation 
is now extended for a wide class of $W$-algebras \cite[\S6]{BFFR}, 
and is proved for type A \cite{SV:2013}
and for type ADE \cite{BFN}
using geometric representation theory on the instanton moduli spaces.

In this paper, we give a proof of Awata and Yamada's five-dimensional 
conjecture \eqref{eq:qagt}.
Our strategy is to use the Zamolodchikov-type recursive formula.
This strategy was originally proposed by Poghossian \cite{P:2009} 
in the four-dimensional case.
On the conformal field theory side, the recursion formula was
derived in \cite{HJS:2010a} by taking the degeneration limit technique 
of Zamolodchikov's  elliptic recursive formula for the four-point 
conformal block on the sphere \cite{Z:1984}.
On the Nekrasov partition function side, 
Fateev and Litvinov \cite{FL:2010} used 
the integral expression of the four-dimensional Nekrasov function to 
analyze its poles and residues, and they obtained the same recursion formula.

In the five-dimensional case,
the Zamolodchikov-type formula was shown in the Nekrasov side 
by the author's previous work \cite{Y:2010}.
Thus the proof of Awata and Yamada's proposal 
is reduced to the proof or the recursive formula 
for the norm of the Whittaker vector of $\vir_{q,t}$.
Now let us state the main theorem.

\begin{thm*}
Let $\xi$ be an indeterminate  and
$w_\xi \in M(h)$ be the Whittaker vector.
Then we can express $\pair{w_\xi}{w_\xi}$ as
$$
 \pair{w_\xi}{w_\xi} = \sum_{n=0}^\infty (\xi^{2}t/q)^{n} F_n(Q,q,t)
$$ 
with $F_n(Q,q,t) \in \bbQ(Q,q,t)$. 
Moreover $F_n(Q,q,t)$ satisfies the following recursive relation.
\begin{align*}
F_n(Q,q,t)
=\delta_{n,0}+\sum_{\substack{r,s\in\bbZ, \\ 1\le r s\le n}}
 \dfrac{A_{r,s}(q,t)F_{n-r s}(q^r t^{s},q,t)}{Q-q^r t^{-s}}.
\end{align*}
Here $A_{r,s}(q,t)$ is the following rational function.
\begin{align*}
A_{r,s}(q,t) := -\sign(r) q^r t^{-s} 
\prod_{\substack{-|r|\le i\le|r|-1, \\ -|s|\le j\le|s|-1, \\ (i,j)\neq (0,0)}}
\dfrac{1}{1-q^i t^{-j}}
\end{align*}
with $\sign(r)=1$ if $r>0$ and $\sign(r)=-1$ if $r<0$.
\end{thm*}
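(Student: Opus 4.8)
The plan is to extract the recursion from the analytic structure of $\pair{w_\xi}{w_\xi}$ as a function of the highest weight $h$, equivalently of the parameter $Q$ with $h = Q + Q^{-1}$. The key structural input is that the contravariant form on the Verma module $M(h)$ is nondegenerate except when $h$ hits one of the reducibility values; by the Kac-type determinant formula for $\vqt$ (which should have been recorded earlier in the paper), the Shapovalov determinant $\det\pair{\cdot}{\cdot}$ on $M(h)^{(N)}$ vanishes, to first order, precisely at $h = h_{r,s}$ with $r s \le N$, where $h_{r,s}$ corresponds to $Q = q^r t^{-s}$. Since $w_\xi = \sum_{N\ge 0} w_\xi^{(N)}$ with each component obtained by inverting the Gram matrix against the Whittaker conditions $T_1.w = \xi w$, $T_{n}.w = 0$ ($n>1$), each $F_n(Q,q,t)$ is a rational function of $Q$ whose only possible poles are simple poles at $Q = q^r t^{-s}$ with $1 \le rs \le n$. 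This explains the shape of the stated formula: $F_n$ equals its value at $Q = \infty$ (which I will show is $\delta_{n,0}$) plus a sum of principal parts.

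First I would pin down the behavior at $Q \to \infty$. In that limit the Verma module $M(h)$ becomes, in a suitable normalization, a free module (the form becomes ``diagonal'' in the PBW-type basis), so $\pair{w_\xi}{w_\xi} \to 1$; this gives the $\delta_{n,0}$ term and shows there is no polynomial-in-$Q$ part beyond the constant. Next, and this is the heart of the argument, I would compute the residue of $F_n$ at $Q = q^r t^{-s}$. At such a point $M(h_{r,s})$ contains a singular vector $\chi_{r,s}$ at level $rs$, and the form degenerates along the submodule it generates, which is isomorphic to $M(h_{r,s} + \text{shift})$ — concretely to the Verma module with parameter $q^r t^s$ (note the sign flip in the exponent of $t$ relative to the pole location, which is exactly what appears in the argument of $F_{n-rs}$ in the theorem). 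The residue of $\pair{w_\xi}{w_\xi}$ is then governed by the image of the Whittaker vector under projection onto this singular submodule: one gets a factor measuring how $\xi^{2 rs}$ times the square norm of $\chi_{r,s}$ (suitably interpreted) couples to the Whittaker condition, times $\pair{w_{\xi'}}{w_{\xi'}}$ computed inside the submodule, which reproduces $F_{n-rs}(q^r t^s,q,t)$ after matching the rescaled Whittaker parameter $\xi'$. The constant $A_{r,s}(q,t)$ is precisely the ratio of (i) the leading coefficient of $(h - h_{r,s})$ in the Kac determinant factor, (ii) the explicit value of the Whittaker pairing with the singular vector, and (iii) the normalization of the contravariant form on the submodule; the product formula $\prod_{i,j} (1-q^i t^{-j})^{-1}$ over the displaced box $[-|r|,|r|-1]\times[-|s|,|s|-1]\setminus\{(0,0)\}$ is the $\vqt$ analogue of the Felder/Malikov-Feigin-Fuks formula for the norm of the singular vector, and the prefactor $-\sign(r) q^r t^{-s}$ absorbs the Jacobian $dh/dQ$ at the pole together with a sign bookkeeping that distinguishes $r>0$ from $r<0$ (i.e. which of the two degenerate directions one is on).

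Assembling: a rational function of $Q$ that vanishes at no finite point other than simple poles at the $Q = q^r t^{-s}$ ($1\le rs \le n$), with prescribed residues, and with prescribed value at $Q=\infty$, is determined by partial fractions; writing this out gives exactly the claimed recursion, and induction on $n$ (the base case $n=0$ being $F_0 = 1$) together with the already-established rationality finishes the proof. The main obstacle I anticipate is step two — controlling the residue precisely — because it requires (a) a clean description of the singular vector $\chi_{r,s}$ and its contravariant norm in closed product form, (b) verifying that the Whittaker vector of $M(h)$ projects onto the Whittaker vector of the embedded submodule with the correctly rescaled parameter $\xi \mapsto \xi$ against the new highest weight, and (c) tracking all normalizations so that the constant comes out exactly as $A_{r,s}$ rather than up to an undetermined scalar. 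In the Virasoro (undeformed) case this is Poghossian's and Hadasz-Jaskólski-Suchanek's analysis; the deformation should go through mutatis mutandis provided the Kac determinant and singular-vector norm formulas for $\vqt$ are available, but the combinatorics of the product over the displaced rectangle, and the appearance of both signs of $r$ and $s$, will need care.
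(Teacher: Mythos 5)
Your skeleton coincides with the paper's: the proof there also reduces $\pair{w_\xi}{w_\xi}$ to the $(1^n),(1^n)$ entry of the inverse Gram matrix, shows (following the Etingof--Styrkas argument for Lie algebras) that this entry has only simple poles in $Q$ at $Q=q^rt^{-s}$ with $1\le rs\le n$, identifies the residue at such a pole with $R_{r,s}(q,t)^{-1}$ times the corresponding entry for the embedded Verma module of highest weight $q^rt^{s}$ (the sign flip you note is exactly what appears there), and assembles the recursion by partial fractions. So the approach is the same.

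The one point where your proposal stops short of a proof is precisely the step you flag as the main obstacle: the determination of the constant $A_{r,s}$, equivalently of the first-order coefficient $R_{r,s}(q,t)$ in the expansion $\pair{\Phi_{r,s}.1_h}{\Phi_{r,s}.1_h}=R_{r,s}(q,t)(h-h_{r,s})+o(h-h_{r,s})$. You invoke ``the $\vqt$ analogue of the Malikov--Feigin--Fuks/Felder norm formula,'' but no such closed formula is taken as given in the paper; establishing it \emph{is} the content of Proposition \ref{prop:Rrs}. The paper does not compute $R_{r,s}$ directly in the deformed algebra. Instead it sets $q=e^{\hbar\ve_1}$, $t=e^{\hbar\ve_2}$ and degenerates to the undeformed Virasoro algebra, where the analogous constant $R'_{r,s}(\beta)$ was computed in \cite{Y:2011}; since $R_{r,s}(q,t)$ is a fixed element of $\bbQ(q,t)$, matching the leading $\hbar$-expansion against the candidate product formula pins it down. (A second, sketched proof instead bounds the $q$-degree of $R_{r,s}$, locates its zeros via the bosonization of the singular vector as a Macdonald polynomial with the explicit normalization factor of Theorem \ref{thm:sing:norm}, and uses a $(q,t)\mapsto(1/t,q)$ symmetry.) Without one of these inputs your partial-fraction argument determines $F_n$ only up to the undetermined residue constants, so to complete the proof along your lines you would need to supply either the degeneration argument or an independent derivation of the singular-vector norm in the deformed setting.
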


Let us explain the organization of the paper here.
In \S \ref{sec:basics}  we introduce the deformed Virasoro algebra.
\S \ref{sec:verma} introduces the Verma modules.
In \S \ref{sec:sing} we show the normalization factor formula 
of the bosonization of singular vectors.
\S \ref{sec:norm} gives the proof of the main theorem.
After defining the Whittaker vector for Virasoro algebra,
we state in Theorem \ref{thm:main}
the conjecture of Awata and Yamada, whose proof is the main result of this paper.

In the appendix we will describe the embedding diagrams of Verma modules 
for $\vqt$.
The result is precisely similar to the (non-deformed) Virasoro Lie algebra case,
but we attached this appendix since there seems no explicit description 
in the literature.

\subsection*{Notation}

We follow \cite{M:1995} for the notations of partitions and symmetric functions.
By a partition we mean a non-decreasing finite sequence of positive integers.
For a partition $\lambda$, its total sum is denoted by $|\lambda|$ 
and its length is denoted by $\ell(\lambda)$.
$\lambda \vdash n$ means that $\lambda$ is a partition with the condition 
$|\lambda| = n$.
We sometimes use abbreviated symbols for partitions with large multiplicities,
such as $(1^n) = (1,1,\ldots,1)$.

\subsection*{Acknowledgements}

The author is supported by the Grant-in-aid for 
Scientific Research (No.\ 25800014), JSPS.

This work is also partially supported by the 
JSPS Strategic Young Researcher 
Overseas Visits Program for Accelerating Brain Circulation
``Deepening and Evolution of Mathematics and Physics,
Building of International Network Hub based on OCAMI''.

A part of this work was done while the author 
stayed National Research University  Higher School of Economics 
(winter 2013-14).
The author would like to thank the institute for support and hospitality.

Finally the author would like to thank Professor Ivan Cherednik 
and Professor Masato Taki for showing their interest.

\section{Basics of the deformed Virasoro algebra}
\label{sec:basics}

\subsection{Definition}
\label{ssec:dfn}

Let us begin with the introduction of the deformed Virasoro algebra \cite{SKAO}.
Let $q$ and $t$ be indeterminates and set $\bbF := \bbQ(q,t)$.
Consider the associative algebra over $\bbF$ 
generated by $\{T_n \mid n \in \bbZ\}$ and $1$ 
with the relations
\begin{align}
\label{eq:qvir:rel}
 \sum_{l \ge 0} f_l (T_{m-l} T_{n+l} - T_{n-l} T_{m+l})
=-\dfrac{(1-q)(1-t^{-1})}{1-q/t}
  \bigl((q/t)^m - (q/t)^{-m}\bigr) \delta_{m+n,0}
\end{align}
for any $m,n \in \bbZ$.
Here the coefficients $f_l$'s are given by the generating function
\begin{align}\label{eq:fl}
  \sum_{l \ge 0} f_l z^l
= \exp\Bigl(
   \sum_{n\ge1} 
    \dfrac{1}{n} \dfrac{(1-q^n)(1-t^{-n})}{1+(q/t)^{n}} z^n 
  \Bigr).
\end{align}
This associative algebra will be denoted by $\vqt$.

The algebra $\vqt$ has an automorphism defined by $T_n \mapsto -T_n$ ($n\in\bbZ$).
There is also an isomorphism of algebras defined by 
$$
 \vqt \xrightarrow{\ \sim \ } \vir_{t^{-1},q},\qquad 
 T_n \longmapsto T_n \ (n\in\bbZ).
$$

By the defining relation \eqref{eq:qvir:rel},
$\vqt$ has a $\bbZ$-grading.
Introduce the outer grading operator $D$ 
by 
$$[D,T_n]=n T_n, \quad  [D,1] =0.
$$
Set the $d$-grading subspace by 
$\vir_{q.t}^{(d)} := \{ X \in \vir_{q.t} \mid [D,X] = d\}$.
Then we have a decomposition
$$
 \vqt = \bigoplus_{d \in \bbZ} \vqt^{(d)}
$$
as an $\bbF$-vector space.
Let us also denote by $\vqt^{(+)}$ (resp. $\vqt^{(-)}$) 
the subalgebra of $\vqt$
generated by $\{T_n \mid n >0\}$ (resp. $\{T_n \mid n < 0\}$).
These subalgebras enjoy the decomposition 
$$
  \vir_{q,t}^{(+)} = \bigoplus_{d > 0} \vir_{q.t}^{(d)},\qquad
  \vir_{q,t}^{(-)} = \bigoplus_{d < 0} \vir_{q.t}^{(d)}.
$$

Although $\vir_{q,t}$ is not a Lie algebra (nor a vertex algebra),
we have the following Poincar\'{e}-Birkoff-Witt type theorem,

\begin{lem}\label{lem:pbw}
Each $d$-grading subspace $\vir_{q.t}^{(d)}$ 
has the following $\bbF$-basis.
$$
 \{ T_{n_1} T_{n_2} \cdots T_{n_\ell} \mid
    n_1 \le n_2 \le \cdots \le n_\ell, \ \sum_i n_i = d\}.
$$
\end{lem}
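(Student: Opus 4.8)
The plan is to establish the two assertions — that the ordered monomials span $\vir_{q,t}^{(d)}$, and that they are linearly independent — separately, and then to combine them; that the relevant monomials are exactly those of total degree $d$ is automatic, since the defining relation \eqref{eq:qvir:rel} is homogeneous of degree $m+n$ for the $D$-grading.

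For spanning, I would show by a bubble-sort procedure that an arbitrary monomial $T_{n_1}\cdots T_{n_\ell}$ of degree $d$ is an $\bbF$-linear combination of ordered ones. The input is \eqref{eq:qvir:rel}: since $f_0=1$ by \eqref{eq:fl}, solving for the $l=0$ term gives, for $a>b$,
\[
 T_a T_b = T_b T_a - \sum_{l\ge 1} f_l\bigl(T_{a-l}T_{b+l}-T_{b-l}T_{a+l}\bigr) + c_{a,b},
\]
where $c_{a,b}\in\bbF$ and $c_{a,b}=0$ unless $a+b=0$. Substituting this for any out-of-order adjacent pair inside a longer monomial produces the swapped monomial, new length-two pieces whose two indices have been pulled apart, and (when $a+b=0$) a strictly shorter monomial. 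One then needs a well-founded order that strictly decreases under this replacement. The point I expect to be the main obstacle is that the corrections $T_{a-l}T_{b+l}$ with $1\le l<(a-b)/2$ are themselves out of order and, against the remaining factors, may create new inversions, so the obvious induction on the number of inversions does not close; one must use a refined order (for instance lexicographic in length, then in a suitable combination of inversion count and the spread of the multiset of indices), and one must come to terms with the fact that each swap also spawns an infinite tail $\sum_{l\gg0} f_l\,T_{b-l}T_{a+l}$ of already-ordered monomials. This last phenomenon is exactly why the statement has to be read in the topological completion of the algebra, and why the rewriting must be organised so that only finitely many not-yet-ordered monomials are in play at any stage.

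For linear independence I would invoke the free-field realization of $\vir_{q,t}$ of \cite{SKAO}: on a Heisenberg Fock module $T(z)=\Lambda^{+}(z)+\Lambda^{-}(z)$ with $\Lambda^{\pm}(z)$ normally ordered exponentials of the bosons. Given a hypothetical finite relation $\sum_M c_M M=0$ among ordered monomials of degree $d$, evaluate it on a generic vector of a Fock module with generic momentum and isolate, for each $M=T_{n_1}\cdots T_{n_\ell}$, the top contribution in which every factor contributes only its creation part; distinct ordered $M$ yield distinct such contributions, which are linearly independent, forcing every $c_M=0$. Equivalently one may argue through the Verma modules: $X\mapsto X.1_h$ embeds $\vir_{q,t}^{(-)}$ into $M(h)$ with the monomials $T_{-\lambda_1}\cdots T_{-\lambda_\ell}1_h$ ($\lambda\vdash -d$) manifestly free, and the automorphism $T_n\mapsto -T_n$ together with the anti-automorphism underlying the contravariant form $\pair{\cdot}{\cdot}$ and a generic-momentum pairing propagates independence to the mixed monomials.

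Combining the two steps yields the lemma: spanning shows $\vir_{q,t}^{(d)}$ is (topologically) generated by its ordered monomials, and independence shows there is no nontrivial finite relation among them. Alternatively the whole proof can be recast as one application of a completed version of Bergman's diamond lemma, whose only overlap ambiguities are the words $T_aT_bT_c$ with $a>b>c$; their resolution is precisely the associativity already built into the definition of $\vir_{q,t}$, and termination is again the well-founded order from the spanning step — so in that formulation, too, the termination bookkeeping is the crux.
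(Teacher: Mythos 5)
The paper states Lemma \ref{lem:pbw} without proof (it is quoted as a known fact, implicitly from \cite{SKAO}), so there is no argument of the author's to measure yours against; I can only assess your outline on its own terms. Your overall strategy --- spanning by a rewriting procedure driven by the $l=0$ term of \eqref{eq:qvir:rel}, linear independence via the bosonization --- is the standard and reasonable one. But as written it is a plan with two acknowledged holes rather than a proof, and the holes are exactly where the content of the lemma lives. For the spanning half, you correctly observe that replacing an out-of-order adjacent pair $T_aT_b$ ($a>b$) produces corrections $T_{a-l}T_{b+l}$ that are again out of order for $1\le l<(a-b)/2$, plus an infinite ordered tail $T_{b-l}T_{a+l}$, and that inside a longer word both kinds of corrections can create \emph{new} inversions against the neighbouring factors (the pair is pulled apart, so $a-l$ may drop below $n_{i-1}$ and $b+l$ may rise above $n_{i+2}$). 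You then say ``one must use a refined order'' and name some candidates, but you never exhibit a well-founded order that provably decreases under the substitution, nor do you specify the topology on $\vqt^{(d)}$ in which the infinite tails converge and in which the rewriting process has a limit. Since you yourself call this ``the crux,'' the spanning claim is not established.

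The independence half has a concrete defect beyond vagueness: your proposed ``top contribution in which every factor contributes only its creation part'' does not exist for the factors $T_n$ with $n>0$, because in the realization \eqref{eq:qvir:bosonization} the operator $T_n$ lowers the Fock degree by $n$ and therefore has no purely-creation component; the ordered monomials of a given degree $d$ generally mix positive, zero and negative modes, and on a single Fock vector (even a generic one) many distinct ordered monomials can have coinciding or vanishing leading terms. Your fallback via $X\mapsto X.1_h$ only handles $\vqt^{(-)}$, i.e.\ monomials in negative modes alone, and the automorphism $T_n\mapsto-T_n$ preserves each monomial up to sign, so it cannot ``propagate independence to the mixed monomials'' as claimed; one would instead need to separate monomials by acting on vectors of positive depth in $\calF_\alpha$ (or on several Fock modules with varying $\alpha$) and to set up an honest filtration argument on the associated graded. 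Both halves are repairable along the lines you indicate, but in their present form neither is complete.
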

%

\subsection{Degeneration to the Virasoro Lie algebra}
\label{subsec:degenerate}

Let us recall the degeneration of $\vqt$ to the Virasoro Lie algebra,
which is the origin of the name ``deformed Virasoro''.

Let $\ve_1$, $\ve_2$ and $\hbar$ indeterminates and set 
\begin{align}\label{eq:qtve}
q = e^{\hbar \ve_1},\quad  t=e^{\hbar \ve_2}. 
\end{align}
Let us consider the algebra $\vir_{e^{\hbar \ve_1},e^{\hbar \ve_2}}$ as defined over 
$\bbF = \bbQ(e^{\hbar \ve_1}, e^{\hbar \ve_2})$,
and let us take the $\hbar$-expansion. 

\begin{lem}
The generator $T_n$ of $\vqt$ can be expanded as 
\begin{align}\label{eq:expand}
 T_n  = 2 \delta_{n,0} + 
  \ve_1 \ve_2
  \Bigl( L_n  + \dfrac{1}{4}(\ve_1-\ve_2)^2 \delta_{n,0} \Bigr) 
  \hbar^2 +  O(\hbar^4),
\end{align}
where $L_n$'s satisfy
\begin{align}\label{eq:degeneration}
 [L_m,L_n] = (m-n) L_{m+n} + c \dfrac{m^3-m}{12}\delta_{m+n,0}
\end{align}
with
\begin{align}\label{eq:beta}
 c=13-6\bigl(\beta +\beta^{-1}\bigr),\quad 
 \beta := \dfrac{\ve_2}{\ve_1}.
\end{align}
\end{lem}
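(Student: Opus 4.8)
The plan is to carry out the formal $\hbar$-expansion of the defining relation: substitute $q=e^{\hbar\ve_1}$, $t=e^{\hbar\ve_2}$ from \eqref{eq:qtve} into \eqref{eq:qvir:rel}--\eqref{eq:fl}, expand in powers of $\hbar$, and read off the relations satisfied by the $\hbar^2$-coefficients of the $T_n$; this is the degeneration argument of \cite{SKAO}. I would first record the expansions of the structure data. Using $1-e^{x}=-2e^{x/2}\sinh(x/2)$ and $1+e^{x}=2e^{x/2}\cosh(x/2)$ and cancelling exponential prefactors gives
\[
\frac{(1-q)(1-t^{-1})}{1-q/t}=\frac{2\sinh(\hbar\ve_1/2)\sinh(\hbar\ve_2/2)}{\sinh(\hbar(\ve_1-\ve_2)/2)},\qquad
\frac{(1-q^{n})(1-t^{-n})}{1+(q/t)^{n}}=\frac{-2\sinh(n\hbar\ve_1/2)\sinh(n\hbar\ve_2/2)}{\cosh(n\hbar(\ve_1-\ve_2)/2)},
\]
both of which are even power series in $\hbar$ with explicit coefficients. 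In particular $f_0=1$ and $f_l=-\tfrac12 l\,\ve_1\ve_2\,\hbar^2+\hbar^4 f_l^{(4)}+O(\hbar^6)$ for $l\ge1$, with $f_l^{(4)}$ computable from the generating function \eqref{eq:fl}, and the right-hand side of \eqref{eq:qvir:rel} equals $-2m\,\ve_1\ve_2\,\hbar^2\,\delta_{m+n,0}$ plus an explicitly known $\hbar^4$-term.

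Then I would insert the ansatz \eqref{eq:expand}, written as $T_n=2\delta_{n,0}+\ve_1\ve_2\hbar^2\widetilde L_n+O(\hbar^4)$ with $\widetilde L_n:=L_n+\tfrac14(\ve_1-\ve_2)^2\delta_{n,0}$, into \eqref{eq:qvir:rel} and match coefficients of $\hbar^k$. The order $\hbar^0$ gives $0=0$, and at order $\hbar^2$ both sides reduce to $-2m\,\ve_1\ve_2\,\delta_{m+n,0}$ --- on the left only the summand $l=|m|$ survives, since $T_0=2+O(\hbar^2)$ --- so these orders carry no information. The first nontrivial order is $\hbar^4$: among the terms of $\sum_l f_l(T_{m-l}T_{n+l}-T_{n-l}T_{m+l})$, only $l=0$ (which contributes $\ve_1^2\ve_2^2[\widetilde L_m,\widetilde L_n]$, the shift dropping out of the commutator) and the finitely many summands for which some index among $m-l,n+l,n-l,m+l$ vanishes can contribute; for the latter, using $f_0=1$, $T_0=2+\ve_1\ve_2\hbar^2\widetilde L_0+\cdots$ and the expansion of $f_l$, one gets a multiple of $\ve_1^2\ve_2^2\widetilde L_{m+n}$ together with a scalar built from $f_{|\cdot|}^{(4)}$. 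Comparing with the $\hbar^4$-part of the right-hand side, dividing by $\ve_1^2\ve_2^2$, and using $[\widetilde L_m,\widetilde L_n]=[L_m,L_n]$, one obtains $[L_m,L_n]=(m-n)L_{m+n}$ when $m+n\ne0$, and for $m+n=0$ the same together with a scalar central term. Plugging in the explicit value of $f_m^{(4)}$ and the $\hbar^4$-coefficient of the right-hand side, this scalar reorganizes into $c\,\tfrac{m^3-m}{12}$ with $c=13-6(\beta+\beta^{-1})$ as in \eqref{eq:beta}, the point being that the shift $\tfrac14(\ve_1-\ve_2)^2\delta_{n,0}$ in \eqref{eq:expand} is precisely what converts the $m^3$-contribution into the Virasoro combination $m^3-m$.

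The one genuinely laborious step is the order-$\hbar^4$ matching: deciding which terms of the infinite sum survive, keeping the $\hbar^0,\hbar^2,\hbar^4$ parts of every bilinear $T_aT_b$, and then performing the elementary but lengthy identification of the scalar $-4f_m^{(4)}/(\ve_1^2\ve_2^2)$ plus the shift contribution against $c\,\tfrac{m^3-m}{12}$. Everything else --- the expansions in the first step and the vanishing of the lower orders --- is routine.
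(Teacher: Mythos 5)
Your proposal follows essentially the same route as the paper's proof: expand $f_l$, the right-hand side of \eqref{eq:qvir:rel}, and $T_n$ in powers of $\hbar$; note that the orders $\hbar^0$ and $\hbar^2$ are consistent with $T_n^{(0)}=2\delta_{n,0}$ and carry no further information; and extract the Virasoro relation with central charge $13-6(\beta+\beta^{-1})$ from the order-$\hbar^4$ comparison, where only the $l=0$ term and the finitely many summands with a vanishing index survive and the shift $\tfrac14(\ve_1-\ve_2)^2\delta_{n,0}$ absorbs the non-Virasoro part of the central term. The only cosmetic difference is your repackaging of the structure-constant expansions via hyperbolic functions, where the paper just Taylor-expands directly.
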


The relation \eqref{eq:degeneration}
is nothing but the defining relation of the Virasoro Lie algebra.
Although this statement is given in \cite{SKAO},
there seems no proof in the literature, 
so let us write down a derivation for readers' convenience.


\begin{proof}
First we want to obtain an expansion of $f_l$.
In the definition \eqref{eq:fl}, we have
$$
 \dfrac{(1-q^n)(1-t^{-n})}{1+(q/t)^n} 
 = -\dfrac{n^2}{2}\ve_1\ve_2 \hbar^2 
   +\dfrac{n^4}{24}\ve_1\ve_2(\ve_1^2-3\ve_1\ve_2+\ve_2^2) \hbar^4 + O(\hbar^6).
$$
Then $f_l$ has the following expansion.
\begin{align}\label{eq:expand:fl}
f_l = \sum_{d\ge0} f_l^{(d)}\hbar^{2d},\qquad
f_l^{(0)} = \delta_{l,0},\quad
f_l^{(1)} = -\dfrac{l}{2}\ve_1\ve_2,\quad
f_l^{(2)} = \dfrac{l^3}{24}\ve_1\ve_2(\ve_1^2-3\ve_1\ve_2+\ve_2^2)+\dfrac{l^3-l}{48}\ve_1^2\ve_2^2.
\end{align}
On the other hand,
the right hand side of the defining relation \eqref{eq:qvir:rel} of $\vqt$ 
can be expanded as 
\begin{align}\label{eq:expand:rhs}
&\dfrac{(1-q)(1-t^{-1})}{1-q/t}\bigl((q/t)^m-(q/t)^{-m}\bigr) 
= \sum_{d \ge 0}\RHS^{(d)} \hbar^{2d},
\end{align}
with
\begin{align*}
\RHS^{(0)} = 0,\quad
\RHS^{(1)} = 2m \ve_1\ve_2,\quad 
\RHS^{(2)} = \dfrac{m}{6} \ve_1\ve_2\bigl( 2m^2(\ve_1^2+\ve_2^2) 
                      + (1-4m^2)\ve_1\ve_2 \bigr).
\end{align*}
By \eqref{eq:expand:fl} and \eqref{eq:expand:rhs},
we see that $T_n$'s can be expanded as 
$T_n = \sum_{d \ge 0} T_n^{(d)} \hbar^{(d)}$.
Then the left hand side of \eqref{eq:qvir:rel} is expanded as 
\begin{align*}
 \sum_{l \ge 0} f_l (T_{m-l} T_{n+l} - T_{n-l} T_{m+l})
=\sum_{d\ge0} \LHS^{(d)} \hbar^{2d}
\end{align*}
with
\begin{align*}
\LHS^{(0)}
 = &\sum_{l\ge0}f_l^{(0)}(T_{m-l}^{(0)}T_{n+l}^{(0)}-T_{n-l}^{(0)}T_{m+l}^{(0)}),
\\
\LHS^{(1)}
 = &\sum_{l\ge0}f_l^{(0)}(T_{m-l}^{(1)}T_{n+l}^{(0)}+T_{m-l}^{(0)}T_{n+l}^{(1)}
                         -T_{n-l}^{(1)}T_{m+l}^{(0)}-T_{n-l}^{(0)}T_{m+l}^{(1)})
  + \sum_{l\ge0}f_l^{(1)}(T_{m-l}^{(0)}T_{n+l}^{(0)}-T_{n-l}^{(0)}T_{m+l}^{(0)}),
\\
\LHS^{(2)}
 = &\sum_{l\ge0}f_l^{(0)}(T_{m-l}^{(2)}T_{n+l}^{(0)}+T_{m-l}^{(1)}T_{n+l}^{(1)}
                         +T_{m-l}^{(0)}T_{n+l}^{(2)}
                         -T_{n-l}^{(2)}T_{m+l}^{(0)}-T_{n-l}^{(1)}T_{m+l}^{(1)}
                         -T_{n-l}^{(0)}T_{m+l}^{(2)})
\\
 + &\sum_{l\ge0}f_l^{(1)}(T_{m-l}^{(1)}T_{n+l}^{(0)}+T_{m-l}^{(0)}T_{n+l}^{(1)}
                         -T_{n-l}^{(1)}T_{m+l}^{(0)}-T_{n-l}^{(0)}T_{m+l}^{(1)})
  + \sum_{l\ge0}f_l^{(2)}(T_{m-l}^{(0)}T_{n+l}^{(0)}-T_{n-l}^{(0)}T_{m+l}^{(0)}).
\end{align*}
Now the comparison of $\LHS^{(0)}$ and $\RHS^{(0)}$ gives 
$$
 T_{m}^{(0)}T_{n}^{(0)} = T_{n}^{(0)}T_{m}^{(0)} \quad \forall\,m,n \in \bbZ.
$$
Then by $\LHS^{(1)}$, $\RHS^{(1)}$ and the formula $f_l^{(1)}$ in \eqref{eq:expand:fl}, 
we have
$$
  T_{m}^{(1)}T_{n}^{(0)}+T_{m}^{(0)}T_{n}^{(1)}
- T_{n}^{(1)}T_{m}^{(0)}-T_{n}^{(0)}T_{m}^{(1)}
- \dfrac{1}{2}\ve_1\ve_2 \sum_{l\ge0}l(T_{m-l}^{(0)}T_{n+l}^{(0)}-T_{n-l}^{(0)}T_{m+l}^{(0)})
= 2\ve_1\ve_2 \delta_{m+n,0}.
$$
The following conditions imply the above relation.
$$
 T_{n}^{(0)} = 2\delta_{n,0}.
$$
By the obtained conditions, $\LHS^{(2)}$ gets the following simple form.
$$
 \LHS^{(2)} 
 = [T_m^{(1)},T_n^{(1)}]-\ve_1\ve_2(m-n)T_{m+n}^{(1)} 
   + 4f_{|m|}^{(2)} \sign(m) \delta_{m+n,0}.
$$
Here $\sign(m)$ is the sign of the integer $m$.
By this formula, $\RHS^{(2)}$ and $f_l^{(2)}$, we finally have
$$
 [T_m^{(1)},T_n^{(1)}]
 = \ve_1\ve_2 (m-n) T_{m+n}^{(1)} 
   -\ve_1\ve_2 \Bigl(\dfrac{m}{2}(\ve_1-\ve_2)^2 
                     + \dfrac{m^3-m}{12} (6\ve_1^2-13\ve_1\ve_2+6\ve_2^2) \Bigr)
    \delta_{m+n,0}.
$$
This relation can be rewritten as 
$$
 [\widetilde{T}_m^{(1)},\widetilde{T}_n^{(1)}]
 = \ve_1\ve_2 (m-n) \widetilde{T}_{m+n}^{(1)} 
 + \ve_1\ve_2\dfrac{m^3-m}{12} (13\ve_1\ve_2-6\ve_1^2-6\ve_2^2) \delta_{m+n,0}, 
$$
where we put 
$\widetilde{T}_n^{(1)} := T_n^{(1)} - \ve_1\ve_2(\ve_1-\ve_2)^2 \delta_{n,0}/4$.
Thus 
$$
 L_n := \dfrac{1}{\ve_1\ve_2} \widetilde{T}_n^{(1)} 
      = \dfrac{1}{\ve_1\ve_2} T_n^{(1)} - \dfrac{1}{4}(\ve_1-\ve_2)^2 \delta_{n,0}
$$
satisfies the desired relation \eqref{eq:degeneration}.
\end{proof}

\section{Verma modules}
\label{sec:verma}

\subsection{Generalities}

We introduce an analog of the category $\calO$ of the 
representations of Lie algebra.
Since $\vqt$ has no abelian subalgebra,
we use the degeneration functor $\Phi$ \eqref{eq:functor_phi} 
to obtain a useful notion.

\subsubsection{}

Let us briefly recall some important categories of representations 
of the Virasoro Lie algebra, following \cite[Chap. 1, \S 2]{IK}.
Let $\vir$ be the Virasoro Lie algebra over $\bbQ$ with 
the generators $\{L_n \mid n \in \bbZ\}$ and 
the central generator $C$  satisfying the defining relation
$$
 [L_m,L_n] = (m-n)L_{m+n} + C \dfrac{m^3-m}{12}\delta_{m+n,0}.
$$
Let $\frkh$ be the abelian Lie subalgebra of $\vir$ generated by $L_0$ and $C$.
$\vir$ is a $\bbZ$-graded Lie algebra with 
$$
 \vir = \bigoplus_{n\in\bbZ}\vir^{(n)}, \qquad
 \vir^{(n)} := \begin{cases}\bbQ L_n & n \neq 0 \\ \frkh & n=0 \end{cases}.
$$
In this $\bbZ$-grading structure, 
we have a unique $\lambda_n \in \frkh^*$ for each $n \in \bbZ$ such that 
$$
 [h,x] = \lambda_{n}(h) x \quad \forall \, \lambda \in \frkh,\ \forall \, x \in \vir^{(n)}.
$$
Let $\iota$ be the homomorphism defined by 
$$
 \iota: \bbZ \longrightarrow  \frkh^*, \quad n \longmapsto \lambda_{n}.
$$

\begin{dfn} Let $M$ be a $\vir$-module.
\begin{enumerate}
\item
$M$ is called $\bbZ$-graded if 
$M = \oplus_{n \in \bbZ} M^{(n)}$ as a $\bbQ$-vector space 
and 
$$
 \vir^{(m)}.M^{(n)} \subset M^{(m+n)}
$$
for any $m,n \in \bbZ$.
\item 
$M$ is called $\frkh$-diagonalizable if 
$$
 M = \bigoplus_{\lambda \in \frkh^*} M_{\lambda}
$$ 
with $M_\lambda := \{v \in M \mid h.v = \lambda(h) v \ \forall \, h \in \frkh\}$.
\item
$M$ is called $\frkh$-semi-simple if 
$$
 \dim M_{\lambda} < \infty
$$ 
for any $\lambda \in \frkh^*$.
\item
$M$ is called admissible 
if it is $\bbZ$-graded and $\frkh$-diagonalizable.
\end{enumerate}
\end{dfn}

Denote by $\calC'$ 
the category of $\bbZ$-graded $\frkh$-diagonalizable $\vir$-modules.
(The morphisms is defined to be $\vir$-homomorphism respecting the $\bbZ$-gradings.)
$\calC'$  is an abelian category.
Denote by $\calC'_{\text{adm}}$ 
the full subcategory of $\widetilde{C}$ 
consisting of admissible $\vir$-modules.
It is also an abelian category.

\begin{dfn}
The category $\calO'$ for $\vir$ is the full subcategory of
$\calC'_{\text{adm}}$
whose objects consist of $M$ such that there exist
finitely many $\lambda_i \in \frkh^*$ satisfying
$$
 \{\lambda \in \frkh^* \mid M_\lambda \neq 0 \} 
\subset \bigcup_i \bigl(\lambda_i - \iota(\bbZ_{\ge0})\bigr).
$$
\end{dfn}

The category $\calO'$ is nothing but the BGG (Beilinson-Gelfand-Gelfand) category.

\subsubsection{}
Next we turn to the introduction of categories of $\vqt$-representations.
In this paper, a $\vqt$-module means a $\bbQ$-vector space $M$ with a left $\vqt$-action.
We denote by `$.$' the action of $\vqt$.
For example, $X.v$ means the action of $X$ of $\vqt$ on $v \in M$.

A $\vqt$-module is called $\bbZ$-graded 
if $M = \oplus_{n \in \bbZ} M^{(n)}$ as a $\bbQ$-vector space and 
$\vqt^{(m)}.M^{(n)} \subset M^{(n+m)}$.

\begin{dfn}
Denote by $\calC$ the category of $\vqt$-modules with $\bbZ$-grading,
where the momorphism is a $\vqt$-homomorphism respecting $\bbZ$-gradings.
\end{dfn}

$\calC$ is an abelian category.

The degeneraton procedure explained in \S \ref{subsec:degenerate} 
induces a functor $\Phi$ from
the category of $\vqt$-modules 
to the category of representations of $\vir$.
On an element $v$ of a $\vqt$-module $M$,
$\Phi$ can be written as
\begin{align}\label{eq:functor_phi}
 \Phi(T_n).v = \lim_{\hbar^2 \to 0} 
  \dfrac{1}{\beta \hbar^2}
  \Bigl(T_n-2-\dfrac{1}{4\beta}(\beta-1)^2\Bigr).v.
\end{align}
Here the parameter $\beta$ is defined by \eqref{eq:beta}.
Note that this functor preserves the $\bbZ$-grading structure of modules.


\begin{dfn}
\begin{enumerate}
\item
An object $M \in \Ob(\calC)$ is called admissible
if $\Phi(M) \in \Ob(\calC')$.
The full subcategory $\calC$ consisting of admissible modules 
is denoted by $\calC_{\text{adm}}$.
\item
The category $\calO$ is the full subcategory of $\calC_{\text{adm}}$
consisting of $M$ satisfying $\Phi(M) \in \Ob(\calO')$.
\end{enumerate}
\end{dfn}

$\calC_{\text{adm}}$ is an abelian category.
The functor $\Phi$ preserves the structure of abelian categories of $\bbZ$-graded modules.

\subsection{Verma module}

\begin{dfn}
$M \in \Ob(\calC)$ is called a highest weight module 
with highest weight $h$ 
if there exists a non-zero element $v \in M$ such that 
\begin{enumerate}
\item  $T_0.v = h v$.
\item  $T_n.v = 0$ for any $n>0$.
\item  $T_{-\lambda}.v$'s ($\lambda$ runs over the set of all partitions) span $M$.
\end{enumerate}
The vector $v$ is called a highest weight vector of $M$.
\end{dfn}

A highest weight module is always an object of $\calC_{\text{adm}}$.

\begin{dfn}
The Verma module $M(h)$ of $\vqt$ 
is defined to be generated by the vector 
$1_{h}$ with the properties 
$$
 T_0.1_{h}=h 1_{h}, \qquad 
 T_n.1_{h}=0 \ (n \ge 1).
$$
\end{dfn}

The Verma module has a $\bbZ$-grading.
Introduce the outer grading operator $D$ by 
\begin{align}\label{eq:opD}
[D,T_n]=n T_n, \qquad  D.1_{h}= h 1_{h}.
\end{align}
Then we have the decomposition 
$$
 M(h) = \bigoplus_{n \in \bbZ_{\ge 0}} M(h)^{(n)},\qquad 
 M(h)^{(n)} := \{v \in M(h) \mid D.v = (h-n) v\}.
$$
By the defining relation of $\vqt$,
$M(h)^{(n)}$ has a basis 
\begin{align}\label{eq:Verma:basis}
\{T_{-\lambda}.1_{h} \mid \lambda \vdash n \}
\end{align}
of $M(h)^{(n)}$,
where we set
$T_{-\lambda} :=  T_{-\lambda_1} T_{-\lambda_2} \cdots T_{-\lambda_\ell}$  
for a partition $\lambda=(\lambda_1,\lambda_2,\ldots,\lambda_\ell)$ of $n$.

The Verma module $M(h)$ is a highest weight module with highest weight vector $1_h$.
It enjoys the following universal property.

\begin{lem}
\begin{enumerate}
\item 
For any highest weight module $M$ with highest weight $h \in \bbC$,
there exists a surjective homomorphism $M(h) \twoheadrightarrow M$.
\item
The Verma module $M(h)$ has a unique maximal proper $\bbZ$-graded submodule 
$N(h) \in \Ob(\calC)$.
It is also the maximal proper submodle of $M(h)$.
\end{enumerate}
\end{lem}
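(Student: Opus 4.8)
The plan is to prove the two assertions by the usual Verma-module technique, adapted to $\vqt$ through the PBW basis of Lemma~\ref{lem:pbw}. For part~(1), let $M$ be a highest weight module with highest weight vector $v$, so that $T_0.v = h v$, $T_n.v = 0$ for $n>0$, and the vectors $T_{-\lambda}.v$ span $M$. Using the basis \eqref{eq:Verma:basis} of $M(h)$, I would define an $\bbF$-linear map $\phi\colon M(h) \to M$ by $\phi(T_{-\lambda}.1_h) := T_{-\lambda}.v$ on basis vectors and extend linearly; this is well defined precisely because $\{T_{-\lambda}.1_h\}$ is a basis. The substance is to check that $\phi$ is a $\vqt$-homomorphism, and it suffices to verify $\phi(T_m.x) = T_m.\phi(x)$ for each generator $T_m$ and each basis vector $x = T_{-\lambda}.1_h$. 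Since $T_m T_{-\lambda}.1_h$ lies in the finite-dimensional graded piece $M(h)^{(|\lambda|-m)}$, the defining relation \eqref{eq:qvir:rel} together with the conditions $T_k.1_h = 0$ ($k>0$) and $T_0.1_h = h 1_h$ rewrites it as a finite combination $\sum_\mu c_\mu T_{-\mu}.1_h$ with coefficients $c_\mu \in \bbF$ depending only on $m$, $\lambda$, $h$ and the structure constants $f_l$. Because $v$ satisfies the identical highest weight conditions, the very same reduction yields $T_m T_{-\lambda}.v = \sum_\mu c_\mu T_{-\mu}.v$, whence $\phi(T_m.x) = T_m.\phi(x)$. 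Finally $\phi$ is surjective, as its image contains all the $T_{-\lambda}.v$, which span $M$.

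For part~(2) I would first treat graded submodules. Any $\bbZ$-graded submodule $N = \bigoplus_n N^{(n)}$ with $N^{(0)} \neq 0$ satisfies $N^{(0)} = M(h)^{(0)} = \bbF 1_h$, since $M(h)^{(0)}$ is one-dimensional; hence such an $N$ contains $1_h$ and therefore equals $M(h)$. Thus every proper graded submodule has $N^{(0)} = 0$. Let $N(h)$ be the sum of all proper graded submodules. It is again graded, and its degree-zero part is the sum of the degree-zero parts of the summands, namely $0$; hence $N(h) \neq M(h)$ is proper. By construction $N(h)$ contains every proper graded submodule, so it is the unique maximal proper graded submodule, which proves the first assertion of~(2).

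It remains to upgrade maximality from graded submodules to arbitrary ones, and for this I would show that every $\vqt$-submodule of $M(h)$ is automatically $\bbZ$-graded. The generator $T_0 \in \vqt^{(0)}$ preserves each finite-dimensional graded piece $M(h)^{(n)}$, so $M(h)$ splits into generalized $T_0$-eigenspaces compatible with the grading. The hard part will be to prove the separation statement that the generalized $T_0$-eigenvalues occurring on $M(h)^{(n)}$ are disjoint from those on $M(h)^{(m)}$ whenever $n \neq m$: granting this, any submodule is $T_0$-stable and hence a sum of generalized $T_0$-eigenspaces, so it respects the grading, and every proper submodule is then graded and contained in $N(h)$. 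To establish the separation I would invoke the degeneration of \S\ref{subsec:degenerate}: under the functor \eqref{eq:functor_phi} the operator $T_0$ corresponds to the Virasoro grading operator $L_0$, whose eigenvalue on the degree-$n$ part of the Virasoro Verma module is $h'+n$, and these are pairwise distinct. Since the $T_0$-eigenvalues on $M(h)^{(n)}$ are rational functions of $q,t$ specializing, at leading order in the $\hbar$-expansion \eqref{eq:expand}, to these distinct limits, they cannot coincide across different $n$. With the separation in hand the gradedness of arbitrary submodules follows, and hence $N(h)$ is the maximal proper submodule of $M(h)$.
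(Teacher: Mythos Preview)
The paper omits the proof entirely, writing only ``The proof is by a standard argument.'' Your arguments for part~(1) and for the existence and uniqueness of the maximal proper \emph{graded} submodule in part~(2) are indeed the standard ones and are correct.

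The final claim---that $N(h)$ is maximal among \emph{all} proper submodules, equivalently that every $\vqt$-submodule of $M(h)$ is automatically $\bbZ$-graded---is where care is needed, since the grading operator $D$ of \eqref{eq:opD} lies outside $\vqt$ (unlike $L_0 \in \vir$). Your idea of replacing $D$ by $T_0 \in \vqt^{(0)}$ is the natural one, but two points in the write-up are soft. First, the eigenvalues of $T_0$ on $M(h)^{(n)}$ are roots of its characteristic polynomial $\chi_n(X) \in \bbF[h][X]$ and are a priori only algebraic over $\bbQ(q,t,h)$, not ``rational functions of $q,t$'' as you assert; moreover $T_0$ need not act as a scalar on $M(h)^{(n)}$ for $n\ge 2$, so there may be several such eigenvalues. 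Second, the degeneration step is only heuristic as written: the substitution $q=e^{\hbar\ve_1}$, $t=e^{\hbar\ve_2}$ is not an evaluation at a point of $\bbF$, so one cannot literally ``specialize'' the eigenvalues. A clean way to close the gap is to show that the resultant $\mathrm{Res}_X(\chi_n,\chi_m) \in \bbQ(q,t,h)$ is nonzero by examining its leading $\hbar$-behavior after the formal substitution together with the basis rescaling $T_{-\lambda}.1_h \mapsto (\ve_1\ve_2\hbar^2)^{\ell(\lambda)} L_{-\lambda}.1_{c,h'}$ from \eqref{eq:lim_M}; at leading order $T_0$ then acts on the degree-$n$ piece by the scalar coming from $L_0$, and the resultant degenerates to a nonzero multiple of $(n-m)^{p(n)p(m)}$. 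With this refinement your argument goes through over $\bbF=\bbQ(q,t)$.
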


The proof is by a standard argument.

The quotient module is denoted by
$$
 L(h) := M(h) / N(h).
$$
It is an irreducible highest weight module with highest weight $h$.
Thus $L(h) \in \Ob(\calC_{\text{adm}})$.

We also have 
\begin{lem}
The image $\Phi(M(h))$ of Verma module under the functor $\Phi$ \eqref{eq:functor_phi}
is again a Verma module of $\vir$.
\end{lem}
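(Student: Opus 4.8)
The plan is to show that the functor $\Phi$ sends the Verma module $M(h)$ of $\vqt$ to the Verma module of $\vir$ with an explicitly computable highest weight, and that no collapsing occurs along the way. First I would record what $\Phi$ does on the highest weight vector $1_h$: since $T_0.1_h = h\,1_h$, formula \eqref{eq:functor_phi} gives $\Phi(T_0).1_h = \lim_{\hbar^2\to 0}\frac{1}{\beta\hbar^2}\bigl(h - 2 - \frac{1}{4\beta}(\beta-1)^2\bigr)1_h$, which forces the $\vqt$-highest weight $h$ to be expanded as $h = 2 + \beta\hbar^2\bigl(h_{\mathrm{Vir}} + \frac{1}{4\beta}(\beta-1)^2\bigr) + O(\hbar^4)$ — exactly the shape of \eqref{eq:expand} with $n=0$ — so that $\Phi(T_0).1_h = h_{\mathrm{Vir}}\,1_h$. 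Thus $1_h$ is a highest weight vector for $\vir$ after applying $\Phi$: it is annihilated by all $\Phi(T_n) = L_n$ with $n>0$ because $T_n.1_h = 0$ and $\Phi$ is $\bbZ$-graded.

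Next I would identify the underlying graded pieces. By construction $\Phi$ preserves the $\bbZ$-grading, and on $M(h)^{(n)}$ the $\bbF$-basis \eqref{eq:Verma:basis} consists of the monomials $T_{-\lambda}.1_h$ for $\lambda\vdash n$. Under the leading-order expansion \eqref{eq:expand}, $T_{-k} = \ve_1\ve_2\,L_{-k}\,\hbar^2 + O(\hbar^4)$ for $k\ge 1$, so $T_{-\lambda}.1_h$ degenerates (after the appropriate rescaling built into $\Phi$) to $L_{-\lambda}.1_h := L_{-\lambda_1}\cdots L_{-\lambda_\ell}.1_h$ modulo higher order terms. Hence $\Phi(M(h))$ is spanned in degree $n$ by $\{L_{-\lambda}.1_h \mid \lambda \vdash n\}$, which is precisely the PBW-type spanning set for the $\vir$-Verma module $M_{\mathrm{Vir}}(h_{\mathrm{Vir}})$. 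So $\Phi(M(h))$ is a highest weight $\vir$-module generated by $1_h$, hence a quotient of $M_{\mathrm{Vir}}(h_{\mathrm{Vir}})$ by the universal property of Verma modules.

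It remains to show this surjection is an isomorphism, i.e. that $\dim \Phi(M(h))^{(n)} = p(n)$ (the number of partitions of $n$), with no degeneration-induced linear relations among the $L_{-\lambda}.1_h$. Here I would use that $\Phi$ is exact on $\bbZ$-graded modules and that $\dim_{\bbF} M(h)^{(n)} = p(n)$ by Lemma \ref{lem:pbw} applied through \eqref{eq:Verma:basis}: the functor $\Phi$ does not increase dimensions of graded pieces, but since the images $L_{-\lambda}.1_h$ are exactly the standard basis of a generic $\vir$-Verma module — which is $p(n)$-dimensional for generic central charge $c = 13 - 6(\beta+\beta^{-1})$ and generic $h_{\mathrm{Vir}}$ — the dimension cannot drop. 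The main obstacle is precisely this non-degeneracy check: one must make sure that the leading-order term of each $T_{-\lambda}.1_h$ really is $(\ve_1\ve_2)^{\ell(\lambda)}\hbar^{2\ell(\lambda)}\,L_{-\lambda}.1_h$ and that no cancellation forces a drop in rank; the cleanest route is to observe that the Shapovalov form of $M(h)$ degenerates to that of $M_{\mathrm{Vir}}(h_{\mathrm{Vir}})$ (up to an overall nonzero power of $\ve_1\ve_2\hbar^2$ in each degree), whose determinant is a nonzero element of $\bbQ(\beta)[h_{\mathrm{Vir}}]$ by the Kac determinant formula, so the degenerated Gram matrix is nonsingular and $\Phi(M(h))\cong M_{\mathrm{Vir}}(h_{\mathrm{Vir}})$.
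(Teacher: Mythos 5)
The paper states this lemma without any proof, treating it as immediate from the construction of the degeneration functor, so there is no argument of the paper's to compare yours against; your write-up supplies the missing details, and its overall structure (the highest weight vector degenerates to a highest weight vector, the monomials $T_{-\lambda}.1_h$ degenerate to $L_{-\lambda}.1_{c,h'}$, hence a surjection from the $\vir$-Verma module which must be an isomorphism) is sound. Two remarks on your last step, which is where the only real content lies. First, the Shapovalov form does not degenerate ``up to an overall nonzero power of $\ve_1\ve_2\hbar^2$ in each degree'': the entry $\langle T_{-\lambda}.1_h, T_{-\mu}.1_h\rangle$ has leading order $\hbar^{2(\ell(\lambda)+\ell(\mu))}$, which varies with $\lambda$ and $\mu$ inside a fixed degree $n$, so you must rescale each basis vector $T_{-\lambda}.1_h$ by $(\ve_1\ve_2\hbar^2)^{-\ell(\lambda)}$ (exactly what the map $\Lim_M$ of \eqref{eq:lim_M} builds in) before the Gram matrix converges entrywise to the Virasoro one. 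Second, nondegeneracy of the limiting Gram matrix holds only for $h'$ and $\beta$ away from the zeros of the Virasoro Kac determinant \eqref{eq:Kac'}; this is harmless in the paper's setting where $q$, $t$, $h$ are indeterminates, but the lemma as stated makes no genericity assumption, and for special $h$ your argument as written does not exclude a rank drop. The cleaner and fully general route is to observe that $\Phi$, realized as $\Lim_M$, is by construction a bijection of graded $\bbQ$-vector spaces: it sends the basis \eqref{eq:Verma:basis} of $M(h)^{(n)}$ to the PBW basis $\{L_{-\lambda}.1_{c,h'} \mid \lambda \vdash n\}$ up to the nonzero scalars $(\ve_1\ve_2)^{\ell(\lambda)}$, so $\dim \Phi(M(h))^{(n)} = p(n) = \dim M'(c,h')^{(n)}$ and the surjection you construct from the universal property is automatically an isomorphism, with no appeal to the Kac determinant needed.
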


Now recall
\begin{fct}[{\cite[Lemma 1.2]{IK}}]
The irreducible quotients of the Verma modules 
of $\vir$ exhaust the simple objects of $\widetilde{\calO}$.
\end{fct}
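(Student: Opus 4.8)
The plan is to run the standard argument identifying the simple objects of a BGG-type category with the simple highest weight modules; the only point that needs genuine care is unwinding the defining cone condition of $\calO'$.

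First I would dispose of the easy inclusion. For $\lambda\in\frkh^{*}$, the irreducible quotient $L(\lambda) = M(\lambda)/N(\lambda)$ is simple by construction, and as a quotient of the Verma module it is $\bbZ$-graded and $\frkh$-diagonalizable with finite-dimensional weight spaces (those of $M(\lambda)$ have dimension at most the number of partitions of $k$ at grading level $k$) and with all weights contained in the single down-cone $\lambda-\iota(\bbZ_{\ge0})$; hence $L(\lambda)\in\Ob(\calO')$. So every irreducible quotient of a Verma module is a simple object of $\calO'$.

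For the converse, let $S$ be a simple object of $\calO'$. The key preliminary step is to show that its $\bbZ$-grading is bounded above. I would first note that each nonzero graded piece $S^{(n)}$ is a single $\frkh$-weight space: picking $0\neq v\in S^{(g)}$ of weight $\mu$, simplicity gives $U(\vir).v = S$, and the $\bbZ$-graded PBW decomposition of $U(\vir)$ shows that $S^{(g+k)}$ is spanned by vectors of the form $x_{1}\cdots x_{j}.v$ with $x_{i}\in\vir^{(m_{i})}$, $\sum_{i} m_{i}=k$, each of which has $\frkh$-weight $\mu+\lambda_{m_{1}}+\cdots+\lambda_{m_{j}}=\mu+\lambda_{k}$ since $\iota$ is a homomorphism. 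Then I would feed this into the $\calO'$-condition: the weights $\{\mu+\lambda_{k}\mid S^{(g+k)}\neq0\}$ lie in a finite union $\bigcup_{i}(\lambda_{i}-\iota(\bbZ_{\ge0}))$; after discarding cones meeting no weight of $S$, each remaining $\lambda_{i}$ equals $\mu+\lambda_{m_{i}}$ for some $m_{i}\in\bbZ$, and $\mu+\lambda_{k}\in\lambda_{i}-\iota(\bbZ_{\ge0})$ becomes, by injectivity of $\iota$, the condition $k\le m_{i}$. Hence $S^{(g+k)}\neq0$ forces $k\le\max_{i}m_{i}$, so the grading is bounded above.

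Given that, let $n_{\max}$ be the top nonzero grading and take $0\neq v\in S^{(n_{\max})}$. It is an $\frkh$-eigenvector by the above, and $L_{k}.v\in S^{(n_{\max}+k)}=0$ for all $k\ge1$, so it is a highest weight vector; then $U(\vir).v$ is a nonzero submodule of $S$, hence equals $S$, so $S$ is a highest weight module. By the universal property of Virasoro Verma modules (the $\vir$-analogue of the Lemma proved above for $\vqt$), $S$ is a quotient of $M(\nu)$ with $\nu$ the weight of $v$, and simplicity forces $S\cong L(\nu)$. The main—indeed only—obstacle is the third paragraph: correctly translating the abstract ``finitely many down-cones'' hypothesis of $\calO'$ into the concrete statement ``the $\bbZ$-grading is bounded above''; note in particular that finite-dimensionality of weight spaces is not needed for this direction, only $\bbZ$-gradedness, $\frkh$-diagonalizability, and the cone condition.
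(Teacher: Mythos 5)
The paper does not prove this statement at all: it is imported verbatim as a Fact from \cite[Lemma 1.2]{IK}, so there is no in-paper argument to compare against. Your proof is the standard one (and essentially the one in Iohara--Koga): the nontrivial content is exactly where you locate it, namely converting the finite-union-of-cones condition into boundedness of the $\bbZ$-grading via injectivity of $\iota$, after which the extremal graded piece yields a highest weight vector and the universal property of $M(\nu)$ finishes the argument. The reasoning is correct. One small point you pass over: when you ``pick $0\neq v\in S^{(g)}$ of weight $\mu$'' you are implicitly using that each graded piece is a sum of $\frkh$-weight spaces; this does hold, because $\frkh=\vir^{(0)}$ preserves $S^{(g)}$ and any subspace invariant under a commuting diagonalizable family decomposes into its intersections with the weight spaces, but it deserves a sentence since $\bbZ$-gradedness and $\frkh$-diagonalizability are imposed as separate axioms. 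Your closing observation that $\frkh$-semi-simplicity (finite-dimensional weight spaces) is not needed for the ``exhaustion'' direction is accurate and a nice touch.
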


Therefore we have
\begin{lem}
$\{L(h) \mid h\}$ exhausts the simple objects of $\calO$.
\end{lem}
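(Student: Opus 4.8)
The plan is to deduce the statement $\{L(h)\mid h\}$ exhausts the simple objects of $\calO$ by transporting the analogous fact for $\vir$ through the degeneration functor $\Phi$. First I would record that $\Phi$ restricts to an exact functor $\calO \to \calO'$ preserving the $\bbZ$-grading, and that by the previous lemma $\Phi(M(h))$ is a Verma module of $\vir$; more precisely $\Phi(M(h)) \cong M_{\vir}(h')$ where $h'$ is the image of the highest weight $h$ under the limit procedure in \eqref{eq:functor_phi}, with central charge $c$ determined by $\beta$ as in \eqref{eq:beta}. Since $\Phi$ is compatible with subquotients and preserves the $\bbZ$-grading, one gets $\Phi(N(h)) = N_{\vir}(h')$ (the maximal proper submodule) and hence $\Phi(L(h)) \cong L_{\vir}(h')$, the irreducible quotient of the $\vir$-Verma module.

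Next I would verify that $L(h)$ is actually a simple object of $\calO$: it lies in $\Ob(\calC_{\text{adm}})$ as noted, and $\Phi(L(h)) \cong L_{\vir}(h') \in \Ob(\calO')$ by the cited Fact, so $L(h) \in \Ob(\calO)$; simplicity is inherited because any proper nonzero $\bbZ$-graded submodule would, being generated in degrees above the top, map under $\Phi$ to a proper submodule of the simple module $L_{\vir}(h')$, forcing it to be zero. For the converse, let $S \in \Ob(\calO)$ be simple. Then $\Phi(S) \in \Ob(\calO')$, and I would argue $\Phi(S) \neq 0$: the top graded piece $S^{(0)}$ survives under $\Phi$ since on it the action of $T_0 - 2 - (\beta-1)^2/(4\beta)$ has a well-defined leading term. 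A simple object of $\calO'$ is, by the cited Fact \cite[Lemma 1.2]{IK}, of the form $L_{\vir}(h')$ for some $h'$; moreover every $\vir$-weight appearing is of this shape, so $\Phi(S)$ has one-dimensional top and is a quotient of some $\vir$-Verma module, hence is simple, hence $\cong L_{\vir}(h')$. Choosing $h$ with $\Phi$-image $h'$, one has a nonzero morphism $M(h) \to S$ (send $1_h$ to a highest weight vector of $S$, which exists since $S$ is a highest weight module once we know $\dim S^{(0)} = 1$), whose image is all of $S$ by simplicity, so $S$ is a quotient of $M(h)$; comparing with $L_{\vir}(h')$ via $\Phi$ forces $S \cong L(h)$.

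The main obstacle I anticipate is the faithfulness-type input needed in two places: (i) showing $\Phi(S) \neq 0$ and more specifically that $\dim S^{(0)} = 1$ so that $S$ is genuinely a highest weight module admitting a surjection from a Verma module, and (ii) upgrading ``$\Phi$ of a proper submodule is proper'' to conclude simplicity is reflected by $\Phi$. Both hinge on the leading-order behaviour of the action in \eqref{eq:functor_phi} being nondegenerate on the top graded component — essentially that the degeneration does not collapse $M(h)^{(0)}$ or the pairing structure there. I would handle this by noting $M(h)^{(0)} = \bbF\cdot 1_h$ is one-dimensional already, so for any $M \in \Ob(\calC_{\text{adm}})$ the degree-$0$ piece injects into $\Phi(M)^{(0)}$, and then reducing the general simple $S$ to a highest weight module by the standard category-$\calO$ argument (finitely many weights, pick a maximal one). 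The remaining steps — exactness of $\Phi$, preservation of gradings, the identification $\Phi(L(h)) \cong L_{\vir}(h')$ — are formal consequences of the lemmas already in the excerpt.
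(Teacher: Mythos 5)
Your argument is correct in outline and is precisely the route the paper intends: the paper offers no written proof at all, simply writing ``Therefore we have'' after recalling the Fact that the $L_{\vir}(h')$ exhaust the simples of $\calO'$ and using that $\calO$ is \emph{defined} by $\Phi(M)\in\Ob(\calO')$ together with $\Phi(M(h))$ being a Verma module. Your writeup fills in the transport through $\Phi$ (including the points the paper leaves implicit, such as $\dim S^{(0)}=1$ forcing a surjection $M(h)\twoheadrightarrow S$), so it matches the paper's approach, just in more detail.
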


\subsection{Shapovalov form}

Now we recall the contravariant form (an analogue of the Shapovalov form for Lie algebras) 
on the Verma module and the Kac determinant formula for $\vir_{q,t}$.
These were introduced in \cite{SKAO} in comparison with the Virasoro Lie algebra.

Introduce the anti-involution
$$
 \sigma: \vir_{q,t} \to \vir_{q,t},\qquad
 T_n \longmapsto T_{-n}
$$
over $\bbF$.
By the Poincar\'{e}-Birkoff-Witt theorem (Lemma \ref{lem:pbw}),
we have the direct sum decomposition
$$
 \vir_{q,t} = \vir_{q,t}^{(0)} 
  \oplus \bigl\{ \vir_{q,t}^{(-)} \vir_{q,t} + \vir_{q,t} \vir_{q,t}^{(+)} \bigr\}. 
$$
as an $\bbF$-vector space.
In this decomposition, denote the projection to the first component by
$$
 \pi: \vir_{q,t} \longrightarrow \vir_{q,t}^{(0)}. 
$$

\begin{dfn}\label{dfn:Shapovalov}
\begin{enumerate}
\item
Define the contravariant bilinear form 
$\calF: \vir \otimes \vir \to \vir^{(0)}$ to be
$$
 \calF(x,y) = \pi(\sigma(x)y)
$$
for $x,y \in \vir$.

\item
The contravariant form 
$\langle -,-\rangle : M(h) \otimes M(h) \to \bbF$ is determined by
$$
 \langle x.1_{h},y.1_{h} \rangle 1_{h} = 
 \sigma(x)y.1_{h}
$$
for $x,y \in \vir$.
We call it the Shapovalov form for $M(h)$.
\end{enumerate}
\end{dfn}

Obviously we have $\langle x.1_{h},y.1_{h} \rangle 1_{h} = \pi(\sigma(x)y).1_{h}$
As in the case the Shapovalov form of Lie algebras,
this $\langle -,-\rangle$ enjoys the following properties.

\begin{lem}
\begin{enumerate}
\item 
For any $v,w \in M(h)$ we have
$\langle v,w \rangle = \langle w,v \rangle$.
\item
For any $v,w \in M(h)$ and $x \in \vir_{q,t}$ we have
$\langle x.v,w \rangle = \langle v,\sigma(x).w \rangle$.
\item
The radical 
$$
 \Rad_{\langle-,-\rangle} := \{ v \in M(h) \mid \langle v,M(h) \rangle = 0  \}
$$
of the contravariant form is a maximal $\vqt$-submodule of $M(h)$.
\end{enumerate}
\end{lem}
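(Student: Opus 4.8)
The plan is to prove the three assertions in the order (2), (1), (3), deducing the symmetry (1) from the contravariance (2) by a uniqueness argument rather than by a direct manipulation of the quadratic relations~\eqref{eq:qvir:rel}. The reason is that, unlike in the Lie-algebra case, the degree-zero part $\vir_{q,t}^{(0)}$ is noncommutative and is not fixed pointwise by $\sigma$, so the classical proof ($\sigma$ is the identity on the Cartan, hence $\pi(\sigma(z))=\pi(z)$) has no literal analogue. Property (2) itself is immediate: writing $v=a.1_h$ and $w=b.1_h$ with $a,b\in\vir_{q,t}$ and using $\langle x.1_h,y.1_h\rangle 1_h=\pi(\sigma(x)y).1_h$, for any $x\in\vir_{q,t}$ one has $\langle x.v,w\rangle 1_h=\pi(\sigma(xa)b).1_h=\pi(\sigma(a)\sigma(x)b).1_h=\langle v,\sigma(x).w\rangle 1_h$, the middle step being only that $\sigma$ is an anti-involution.

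For (1), the key observation is that \emph{every} bilinear form $\beta$ on $M(h)$ with $\beta(x.v,w)=\beta(v,\sigma(x).w)$ is determined by the single scalar $\beta(1_h,1_h)$. Indeed, $\vir_{q,t}^{(k)}.1_h=0$ for $k>0$ (the largest index occurring in a PBW monomial of positive degree is itself positive), and combining this — applied in both slots — with $M(h)^{(k)}=0$ for $k<0$ forces $\beta(M(h)^{(m)},M(h)^{(n)})=0$ whenever $m\neq n$; while for $\lambda,\mu\vdash n$ one rewrites $\beta(T_{-\lambda}.1_h,T_{-\mu}.1_h)=\beta(1_h,\sigma(T_{-\lambda})T_{-\mu}.1_h)$, and $\sigma(T_{-\lambda})T_{-\mu}.1_h\in M(h)^{(0)}=\bbF 1_h$ acts on $1_h$ by a scalar, so $\beta$ is pinned down on the basis $\{T_{-\lambda}.1_h\}$. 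Now $B(v,w):=\langle w,v\rangle$ satisfies $B(x.v,w)=B(v,\sigma(x).w)$ (apply (2) twice and use $\sigma^2=\id$) and $B(1_h,1_h)=1=\langle 1_h,1_h\rangle$, so by the uniqueness just noted $B=\langle-,-\rangle$, which is exactly the symmetry. This is the step I expect to be the main obstacle — not the algebra, which the uniqueness trick sidesteps, but the grading bookkeeping, and the realization at the outset that the Lie-theoretic shortcut fails because $\chi_h\circ\sigma\neq\chi_h$ on $\vir_{q,t}^{(0)}$ in general, where $\chi_h\colon\vir_{q,t}^{(0)}\to\bbF$ denotes the scalar action on $M(h)^{(0)}$.

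For (3): contravariance gives $\langle x.v,w\rangle=\langle v,\sigma(x).w\rangle=0$ for $v$ in the radical, so $\Rad_{\langle-,-\rangle}$ is a $\vir_{q,t}$-submodule; it is $\bbZ$-graded because the form pairs distinct graded pieces trivially; and it is proper since $\langle 1_h,1_h\rangle=1\neq 0$. Hence $\Rad_{\langle-,-\rangle}\subseteq N(h)$, the maximal proper $\bbZ$-graded submodule supplied by the earlier lemma. Conversely, for $v\in N(h)$ and any partition $\mu$, symmetry together with (2) gives $\langle T_{-\mu}.1_h,v\rangle=\langle 1_h,\sigma(T_{-\mu}).v\rangle$; since $N(h)$ is a submodule $\sigma(T_{-\mu}).v\in N(h)$, and since $N(h)\cap M(h)^{(0)}=0$ while the form pairs only equal gradings, this vanishes. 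As the $T_{-\mu}.1_h$ span $M(h)$ we get $v\in\Rad_{\langle-,-\rangle}$, so $\Rad_{\langle-,-\rangle}=N(h)$ is the (unique) maximal proper submodule.
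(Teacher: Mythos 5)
Your proof is correct. The paper gives no argument for this lemma---it simply invokes the analogy with the Shapovalov form of Lie algebras---so you have supplied a proof where none is printed. Parts (2) and (3) are handled in the standard way: (2) is just that $\sigma$ is an anti-involution, and (3) combines contravariance, orthogonality of distinct graded pieces, $\langle 1_h,1_h\rangle=1$, and the identification of the radical with the maximal proper graded submodule $N(h)$ in both inclusions. For (1) you substitute a uniqueness argument---any contravariant bilinear form is determined by its value at $(1_h,1_h)$, and the transposed form is again contravariant with the same normalization---for the usual direct computation. That argument is complete and valid; the grading bookkeeping ($\vir_{q,t}^{(k)}.1_h=0$ for $k>0$ and $M(h)^{(k)}=0$ for $k<0$) is exactly what is needed.

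One correction to your motivating remark, though it does not affect the validity of the proof: the classical shortcut does not in fact fail here. By Lemma \ref{lem:pbw}, $\vir_{q,t}^{(0)}$ has as basis the ordered monomials $T_{n_1}\cdots T_{n_\ell}$ with $n_1\le\cdots\le n_\ell$ and $\sum_i n_i=0$, and $\sigma$ sends such a monomial to $T_{-n_\ell}\cdots T_{-n_1}$, which is again ordered of total degree $0$; hence $\sigma$ preserves $\vir_{q,t}^{(0)}$ and the complementary summand $\vir_{q,t}^{(-)}\vir_{q,t}+\vir_{q,t}\vir_{q,t}^{(+)}$, so $\pi\circ\sigma=\sigma\circ\pi$. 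Moreover $\chi_h$ kills every such basis monomial except the powers $T_0^k$ (any other one has $n_\ell>0$, hence annihilates $1_h$, and its image under $\sigma$ ends in $T_{-n_1}$ with $-n_1>0$, hence also annihilates $1_h$), while $\sigma(T_0^k)=T_0^k$. Therefore $\chi_h\circ\sigma=\chi_h$ on $\vir_{q,t}^{(0)}$ after all, and the one-line classical proof $\langle y.1_h,x.1_h\rangle=\chi_h(\pi(\sigma(y)x))=\chi_h(\sigma(\pi(\sigma(x)y)))=\chi_h(\pi(\sigma(x)y))=\langle x.1_h,y.1_h\rangle$ goes through verbatim. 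Your uniqueness argument is a perfectly good alternative, but the claimed obstruction to the direct route is not real.
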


As in the case of the Virasoro Lie algebra,
the determinant of the contravariant form tells us the irreducibility of $M(h)$.
Let us recall the argument briefly.
By (2) of the above Lemma, we have
$\pair{ M(h)^{(m)} }{ M(h)^{(n)} } = 0$ for $m \neq n$.
Then by (3), we see that $M(h)$ is irreducible if and only if 
$\Rad_{\langle-,-\rangle}  \cap \, M(h)^{(n)}$ is trivial for any $n \in \bbZ_{\ge 0}$.
The latter is equivalent to the vanishing of the determinant 
of the matrix whose elements are given by 
of the form $\langle -,-\rangle|_{ M(h)^{(n)}\otimes M(h)^{(n)}}$
evaluated on some basis of $M(h)^{(n)}$.
Recall that we have a basis $\{T_{-\lambda}.1_{h} \mid \lambda \vdash n \}$
\eqref{eq:Verma:basis} of $M(h)^{(n)}$.
So let us set 
$$
 \Det_n(q,t,h) := 
 \det \bigl(\langle T_{-\lambda}.1_{h},T_{-\mu}.1_{h} \rangle \bigr)_{\lambda,\mu \, \vdash n}.
$$
It is an element of $\bbF[h]$.
As conjectured in \cite{SKAO} and proved in \cite{BP},
we have the formula 
\begin{align}\label{eq:Kac}
 \Det_n(q,t,h) = C_n
  \prod_{\substack{r,s \ge 1 \\ r s \le n}} 
   (h^2-h_{r,s}^2)^{p(n-r s)}
   \Bigl(\dfrac{(1-q^r)(1-t^r)}{q^r+t^r}\Bigr)^{p(n-r s)}.
\end{align}
Here $p(m) := \#\{\lambda \mid \lambda \,\vdash m\}$ denotes the partition number of $m$,
$C_n \in \bbQ\setminus\{0\}$ is some constant, and
$$
 h_{r,s} := t^{r/2} q^{-s/2} + t^{-r/2} q^{s/2}.
$$
This is an analogue of the Kac determinant formula
for the Virasoro Lie algebra.

\subsection{Singular vector}

Let us introduce some basic notions.
For a $\vqt$-module $M$,
we set
$$
 M^{\vqtp} := \{ v \in M \mid T_n.v = 0 \quad \forall n \in \bbZ_{>0} \}.
$$

\begin{dfn} 
Let $M=\oplus_{n\in\bbZ}M^{(n)}$ 
be a $\bbZ$-graded $\vqt$-module and $v$ be a non-zero element of $M$.
\begin{enumerate}
\item
 $v$ is called a weight vector if it belongs to a graded component $M^{(d)}$ 
 with some $d \in \bbZ$.
 The integer $d$ is called the weight of the weight vector $v$.
\item
 A weight vector $v$ is called a singular vector if $v \in  M^{\vqt^{+}}$.
\end{enumerate}
\end{dfn}

Similar definitions can be introduced for the Virasoro Lie algebra $\vir$.
We immediately have

\begin{lem}\label{lem:sing:deg}
By the functor $\Phi$ \eqref{eq:functor_phi},
a singular vector in the Verma module $M(h)$ of $\vqt$
is mapped to a singular vector in the Verma module of $\vir$.
\end{lem}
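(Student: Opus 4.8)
The plan is to unwind the definition of $\Phi$ and check that it is compatible with the highest-weight-module structure and with the conditions $T_n.v = 0$ for $n>0$. First I would record that $\Phi$ preserves the $\mathbb{Z}$-grading: a weight vector $v \in M(h)^{(n)}$ of $\vqt$ gives a weight vector $\Phi(v)$ of the same weight in $\Phi(M(h))$, which by the preceding lemma is a Verma module of $\vir$. So it suffices to show that if $v$ is annihilated by all $T_n$ with $n>0$, then $\Phi(v)$ is annihilated by all $L_n$ with $n>0$. The key computation is the relation \eqref{eq:functor_phi}, which reads
$$
 \Phi(L_n).\Phi(v) = \lim_{\hbar^2 \to 0} \frac{1}{\beta \hbar^2}
  \Bigl(T_n - 2\delta_{n,0} - \tfrac{1}{4\beta}(\beta-1)^2\delta_{n,0}\Bigr).v .
$$
For $n>0$ the subtracted constants vanish, so the right-hand side is $\lim_{\hbar^2\to 0}\frac{1}{\beta\hbar^2} T_n.v$; since $v$ is singular, $T_n.v = 0$ identically in $\hbar$, hence $\Phi(L_n).\Phi(v) = 0$. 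One also should confirm that $\Phi(v)\neq 0$, i.e.\ that the degeneration functor does not kill $v$ — this follows because $\Phi$ sends the PBW-type basis $\{T_{-\lambda}.1_h\}$ of $M(h)^{(n)}$ to (a triangular modification of) the PBW basis $\{L_{-\lambda}.1_{h'}\}$ of the degenerate Verma module, so $\Phi|_{M(h)^{(n)}}$ is an isomorphism of the $n$-th graded pieces; in particular a nonzero weight vector stays nonzero.

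Concretely, I would argue as follows. Fix $n>0$ and a singular $v \in M(h)^{(n)}$, so $T_m.v = 0$ for all $m>0$. The module $\Phi(M(h))$ is, by the earlier lemma, the Verma module $M(h')$ of $\vir$ with $h' = \Phi(T_0)$ acting on the highest weight vector, and $\Phi(1_h) = 1_{h'}$. By functoriality and $\mathbb{Z}$-grading, $\Phi(v) \in M(h')^{(n)}$ is nonzero. For $m>0$ we then have $L_m.\Phi(v) = \Phi(T_m^{(1)}/(\ve_1\ve_2)).\Phi(v)$ up to the $\delta_{m,0}$ correction in the definition of $L_m$, which is irrelevant since $m>0$; and $T_m^{(1)}$ is precisely the coefficient extracted by \eqref{eq:functor_phi}, so $L_m.\Phi(v) = \lim \frac{1}{\beta\hbar^2} T_m.v = 0$. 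Hence $\Phi(v)$ is a weight vector of positive weight annihilated by all $L_m$, $m>0$, i.e.\ a singular vector of the Verma module of $\vir$.

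I do not expect a serious obstacle here — the statement is essentially a bookkeeping consequence of the explicit expansion \eqref{eq:expand} and the definition \eqref{eq:functor_phi} of $\Phi$. The one point that deserves a sentence of care is the non-vanishing of $\Phi(v)$: one must know that $\Phi$ restricted to each graded component $M(h)^{(n)}$ is injective (indeed bijective onto $M(h')^{(n)}$), which is where the PBW theorem (Lemma \ref{lem:pbw}) and the fact that $\Phi(M(h))$ is again a Verma module are used. Everything else is the observation that the correction terms $2\delta_{n,0}$ and $\frac{1}{4\beta}(\beta-1)^2\delta_{n,0}$ in \eqref{eq:functor_phi} are supported at $n=0$ and therefore play no role when testing the annihilation conditions indexed by $n>0$.
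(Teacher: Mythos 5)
Your argument is correct and is exactly the verification the paper has in mind: the paper states this lemma with no proof (``We immediately have''), treating it as a direct consequence of the definition \eqref{eq:functor_phi} and the expansion \eqref{eq:expand}, which is precisely what you unwind. Your extra sentence on the non-vanishing of $\Phi(v)$, via the graded correspondence of PBW bases as in \eqref{eq:lim_M}, is a worthwhile point of care but does not change the route.
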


As in the case of Lie algebras, singular vectors play important role 
in the structure of Verma module of $\vqt$.
For instance, to study embedding diagrams of Verma modules,
we need the following uniqueness property of singular vectors. 
Set
$$
 \bigl(M(h)^{(n)}\bigr)^{\vqtp} := M(h)^{\vqtp} \cap M(h)^{(n)}.
$$

\begin{lem}\label{lem:hom<=1}
Then for each $n \in \bbZ_{>0}$ we have
$$
 \dim \bigl(M(h)^{(n)}\bigr)^{\vqtp} \le 1.
$$
In particular, for any $h,h' \in \bbF$ we have
$$
 \dim \Hom_{\vqt}\bigl(M(h),M(h')\bigr) \le 1.
$$
Here $\Hom_{\vqt}$ means the space of homomorphisms 
between $\bbZ$-graded $\vir_{q,t}$-modules.
\end{lem}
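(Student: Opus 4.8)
The plan is to transfer both inequalities from the Virasoro Lie algebra to $\vqt$ by means of the degeneration functor $\Phi$ of \eqref{eq:functor_phi}, exploiting the properties already recorded in the text: $\Phi$ preserves the $\bbZ$-grading, it sends singular vectors of $M(h)$ to singular vectors of the $\vir$-module $\Phi(M(h))$ (Lemma \ref{lem:sing:deg}), and $\Phi(M(h))$ is itself a Verma module of $\vir$. Granting for the moment that the degeneration loses no dimension on the finite graded pieces --- so that $\Phi$ restricts to an isomorphism $M(h)^{(n)}\xrightarrow{\ \sim\ }\Phi(M(h))^{(n)}$ carrying singular vectors to singular vectors --- the first inequality follows at once: $\bigl(M(h)^{(n)}\bigr)^{\vqtp}$ is identified with a subspace of the space of degree-$n$ singular vectors of the Virasoro Verma module $\Phi(M(h))$, and the latter space is at most one-dimensional by the classical uniqueness of singular vectors for $\vir$ (see \cite{IK}).

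For the homomorphism bound I would first recall the usual description of maps out of a Verma module. A $\vqt$-homomorphism $\phi\colon M(h)\to M(h')$ is determined by $\phi(1_h)$; by the highest-weight relations and the $\bbZ$-grading, each homogeneous component $v^{(k)}$ of $\phi(1_h)$ lying in $M(h')^{(k)}$ is a singular vector with $T_0.v^{(k)}=h\,v^{(k)}$, and conversely, by the universal property of $M(h)$, every $v\in M(h')^{\vqtp}$ with $T_0.v=h\,v$ occurs in this way. Hence $\Hom_{\vqt}(M(h),M(h'))$ is $\bbF$-linearly identified with $\bigoplus_{k\ge0}\{\,v\in\bigl(M(h')^{(k)}\bigr)^{\vqtp}\mid T_0.v=h\,v\,\}$. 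By the first part each $\bigl(M(h')^{(k)}\bigr)^{\vqtp}$ is $\le1$-dimensional, and it is stable under $T_0$ --- this is immediate from the defining relation \eqref{eq:qvir:rel} evaluated at $(m,0)$ with $m\ge1$ --- so $T_0$ acts on it by a scalar $\mu_k\in\bbF$, with $\mu_0=h'$. It remains to see that the $\mu_k$ for which $\bigl(M(h')^{(k)}\bigr)^{\vqtp}\neq0$ are pairwise distinct; but under the isomorphism of the first paragraph $\mu_k$ degenerates to the eigenvalue of $\Phi(T_0)$ on the degree-$k$ singular vector of the Virasoro Verma module $\Phi(M(h'))$, and $\Phi(T_0)$ --- which corresponds to $L_0$ under the degeneration of \S\ref{subsec:degenerate} --- has pairwise distinct eigenvalues on the graded pieces of a Verma module of $\vir$. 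Consequently at most one $k$ can satisfy $\mu_k=h$, and $\dim_{\bbF}\Hom_{\vqt}(M(h),M(h'))\le1$.

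The step needing genuine care is the one set aside above: that $\Phi$ loses no dimension on each $M(h)^{(n)}$, equivalently that no non-zero element of $M(h)^{(n)}$ degenerates to $0$. This requires unwinding the construction of $\Phi$ along the lines of \S\ref{subsec:degenerate}: after the substitution $q=e^{\hbar\ve_1}$, $t=e^{\hbar\ve_2}$, the PBW monomial $T_{-\lambda}.1_h$ with $\lambda\vdash n$ enters $M(h)$ at order $\hbar^{2\ell(\lambda)}$ with a leading coefficient which, by \eqref{eq:expand}, is a non-zero scalar multiple of the Virasoro monomial $L_{-\lambda}.1$. Rescaling each monomial by the appropriate power of $\hbar$ before taking $\hbar\to0$ therefore matches the PBW basis $\{T_{-\lambda}.1_h\}_{\lambda\vdash n}$ of $M(h)^{(n)}$ with the basis $\{L_{-\lambda}.1\}_{\lambda\vdash n}$ of $\Phi(M(h))^{(n)}$ up to non-zero scalars, which is exactly the required injectivity and which in passing re-derives $\dim\Phi(M(h))^{(n)}=p(n)=\dim M(h)^{(n)}$. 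With this in hand the argument is complete, the remainder being bookkeeping with the $\bbZ$-grading together with the two Virasoro facts quoted from \cite{IK}.
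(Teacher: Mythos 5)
Your proof takes essentially the same route as the paper: the paper's entire argument is to cite the classical uniqueness of singular vectors for the Virasoro Verma module (\cite[Proposition 5.1]{IK}) and transfer it to $\vqt$ via the degeneration functor $\Phi$ and Lemma \ref{lem:sing:deg}. You additionally make explicit two points the paper leaves implicit --- the injectivity of the (monomial-wise rescaled) degeneration on each finite graded piece, which is exactly the map $\Lim_M$ of \eqref{eq:lim_M}, and the $T_0$-eigenvalue separation needed for the ``in particular'' Hom bound --- both of which are correct and consistent with the paper's machinery.
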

\begin{proof}
A similar statement for the Virasoro Lie algebra $\vir$ is known
(see \cite[Proposition 5.1]{IK} for example).
Then our statement holds by Lemma \ref{lem:sing:deg}.
\end{proof}

On the other hand,
for the existence of singular vector,
the Kac determinant \eqref{eq:Kac} must vanish.
The existence of singular vector 
implies the existence of homomorphism between Verma modules,
as in the case of Lie algebras.
Such a homomorphism is induced by a special element of $\vqt$, 
which we will call Shapovalov element following the Lie algebra case. 

In the lemma below, assume $q,t,h$ are indeterminates and 
let $\calV_{r,s}$ be the algebraic set in $\bbC[q,t,h]$ defined by $h^2-h_{r,s}^2 = 0$. 
Let $\bbF_{r,s} := \bbC[\calV_{r,s}]$ be the coordinate ring of $\calV_{r,s}$,
and $I_{r,s} := I(\calV_{r,s})$ the ideal of $\calC_{r,s}$ in $\bbC[q,t,h]$.
We also denote by $\pi_{r,s} := \bbC[q,t,h] \to \bbF_{r,s}$ the natural homomorphism.
Consider $\vqt$ as an object defined over $\bbF=\bbQ(q,t)$ 
and $M(h)$ defined over $\bbF[h]$.

\begin{lem}\label{lem:sing:exist}
Fix a pair $(r,s)$ of positive integers, and set $n:= r s$.
Then there exists $S_{r,s} \in \vqt^{(-n)}$, called a Shapovalov element,
satisfying
\begin{enumerate}
\item 
$T_k S_{r,s}.1_h \in I_{r,s} \otimes_{\bbF} M(h)$ for any $k \in \bbZ_{>0}$.
\item
$S_{r,s} =\sum_{\lambda \, \vdash n} H_{\lambda} T_{-\lambda}$ with 
$H_{\lambda} \in \bbF[h]$ and $H_{(1^n)} = 1$.
\end{enumerate}
\end{lem}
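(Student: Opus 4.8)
The plan is to exploit the degeneration functor $\Phi$ in \eqref{eq:functor_phi} together with the known existence of Shapovalov elements for the Virasoro Lie algebra $\vir$, and then to lift the construction over the coordinate ring $\bbF_{r,s}$. First I would recall that the classical Shapovalov element $S_{r,s}^{\vir} \in U(\vir^{(-n)})$ exists: there is a polynomial (in the central charge and highest weight) element of the universal enveloping algebra of the negative part of $\vir$, of weight $-n = -rs$, normalized so that the coefficient of $L_{-1}^n$ is $1$, which generates the singular vector on the locus where the Kac determinant factor $(h - h_{r,s})(h+h_{r,s})$ vanishes. By the PBW theorem for $\vqt$ (Lemma \ref{lem:pbw}) I would choose, for each partition $\lambda \vdash n$, a lift $T_{-\lambda}$ of the corresponding classical monomial $L_{-\lambda}$, so that $\Phi$ sends $\sum_\lambda H_\lambda T_{-\lambda}$ to $\sum_\lambda \bar H_\lambda L_{-\lambda}$ up to the grading-preserving normalization.

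\textbf{Main construction.} The heart of the argument is to show that the system of equations "$T_k\bigl(\sum_{\lambda\vdash n}H_\lambda T_{-\lambda}\bigr).1_h \equiv 0$ for all $k>0$, with $H_{(1^n)}=1$" has a solution with $H_\lambda \in \bbF_{r,s}$ (equivalently $H_\lambda\in\bbF[h]$ modulo $I_{r,s}$). Since $\vqt$ is $\bbZ$-graded and $M(h)$ is generated in degree $0$, it suffices to impose $T_k.S_{r,s}.1_h=0$ for $1\le k\le n$; moreover, because $\vqt^{(+)}$ is generated by $T_1,\dots,T_n$ (indeed by $T_1$ and $T_2$) acting on a degree-$n$ vector, the vanishing conditions amount to a finite linear system whose coefficient matrix has entries in $\bbF[h]$. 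The key point is that this linear system becomes solvable exactly modulo $I_{r,s}$: the matrix of $\langle T_{-\lambda}.1_h, T_{-\mu}.1_h\rangle$ on $M(h)^{(n)}$ drops rank on $\calV_{r,s}$ by the Kac determinant formula \eqref{eq:Kac}, and by Lemma \ref{lem:hom<=1} the kernel is exactly one–dimensional there. I would phrase the vanishing conditions $T_k S_{r,s}.1_h=0$ as the statement that the covector obtained by pairing $S_{r,s}.1_h$ against all of $M(h)^{(n)}$ lies in the radical; the one–dimensionality of the radical at a generic point of $\calV_{r,s}$ gives a unique (up to scalar) solution $(H_\lambda)$ with coordinates in the fraction field of $\bbF_{r,s}$, and the normalization $H_{(1^n)}=1$ pins the scalar. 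To get $H_\lambda\in\bbF_{r,s}$ rather than merely in its fraction field, I would either argue that $H_{(1^n)}=1$ forces no denominators (the solution is obtained by Cramer's rule from a submatrix whose relevant minor is a unit after the normalization), or, more robustly, descend the classical Shapovalov element $S_{r,s}^{\vir}$, whose coefficients are genuine polynomials, and check that the $\hbar$-expansion of $\Phi$ identifies its leading term with the sought $S_{r,s}$ modulo $I_{r,s}$; property (1) then follows because $\Phi$ intertwines the $T$-action with the $L$-action and the classical singular-vector condition holds identically in the central charge and weight on $\calV_{r,s}$.

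\textbf{Expected obstacle.} The main obstacle is the passage from "solvable over the fraction field of $\bbF_{r,s}$" to "solvable over $\bbF_{r,s}$ itself with the prescribed normalization," i.e.\ controlling denominators. This is exactly where one must use that the rank drop on $\calV_{r,s}$ is by precisely one and that the $(1^n)$–component can be taken as a pivot; the cleanest way is to show that the submatrix of the Gram matrix indexed by partitions $\lambda\neq(1^n)$ is invertible over $\bbF_{r,s}$ (its determinant, by \eqref{eq:Kac} applied with the $(1^n)$ row/column removed, is a nonzero element of $\bbF_{r,s}$), so that Cramer's rule produces $H_\lambda\in\bbF_{r,s}$ directly, with $H_{(1^n)}=1$ imposed. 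A secondary, purely bookkeeping point is to verify that the finitely many conditions $T_k S_{r,s}.1_h=0$ for $1\le k\le n$ really do imply $T_k S_{r,s}.1_h=0$ for all $k>0$; this is immediate once one knows $M(h)^{(m)}=0$ for $m<0$, since $T_k S_{r,s}.1_h\in M(h)^{(n-k)}$ and for $k>n$ this group vanishes, while the conditions for $k\le n$ are the ones solved. With these points in hand, properties (1) and (2) of $S_{r,s}$ follow.
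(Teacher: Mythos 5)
Your main construction is essentially the paper's proof: both set up the linear system for the coefficients $H_\lambda$ over the coordinate ring $\bbF_{r,s}$, use the Kac determinant formula \eqref{eq:Kac} to obtain solvability away from the Zariski-closed locus $\calV_{r,s}\cap\bigcup_{\alpha\beta<n}\calV_{\alpha,\beta}$ where lower-level determinants also vanish, invoke Lemma \ref{lem:hom<=1} for uniqueness, normalize at $(1^n)$, and lift along $\pi_{r,s}$. One caveat: your proposed justification that the Gram minor with the $(1^n)$ row and column removed is invertible ``by \eqref{eq:Kac} applied with the $(1^n)$ row/column removed'' does not stand as written, since the Kac formula computes only the full determinant and says nothing about its minors; this is inessential, however, because the paper handles the integrality issue exactly as in your fallback, by rescaling a fraction-field solution into $\bbF_{r,s}$ and then fixing the normalization via the uniqueness supplied by Lemma \ref{lem:hom<=1}.
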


\begin{proof}
We mimic the argument in \cite[Proposition 5.2]{IK}.
We first show the existence of an element 
$$
 \widetilde{S}_{r,s} =\sum_{\lambda \, \vdash n} c_{\lambda} T_{-\lambda} 
 \in \bbF_{r,s} \otimes_{\bbF}\vir^{(-n)} 
$$
with $c_{(1^n)} =1$ and
\begin{align}\label{eq:lem:shap_elem}
 \widetilde{S}_{r,s}.1_{h} \in \bigl(M(h)^{(-n)}\bigr)^{\vqtp}.1_{h}\quad
 \forall (q,t,h) \in \calV_{r,s}.
\end{align}
The condition \eqref{eq:lem:shap_elem} is a system of linear equations in $\{c_{\lambda}\}$
defined over $\bbF_{r,s}$.
Although any solution lies in the quotient field of $\bbF_{r,s}$,
it is enough to consider solutions in $\bbF_{r,s}$
by multiplying non-zero elements of $\bbF_{r,s}$.
Thus we want to show the existence of solution on a Zariski dense subset of $\bbF_{r,s}$.
The set 
$$
 \calD_{r,s} := \calV_{r,s} \cap 
  \bigcup_{\substack{\alpha,\beta\in\bbZ_{>0}  \\ \alpha \beta < n}} 
  \calV_{\alpha,\beta} 
$$
is a Zariski closed subset of $\calV_{r,s}$.
For $(q,t,h) \in \calV_{r,s} \setminus \calD_{r,s}$, 
we have $\det_m \neq 0$ for any $m <n$ and $\det_n = 0$.
Hence \eqref{eq:lem:shap_elem} has a solution on a Zariski dense subset of $\bbF_{r,s}$.
By Lemma \ref{lem:hom<=1}, a solution is uniquely determined by $c_{(1^n)}$.
Thus we may assume $c_{(1^n)}=1$.

Now choose $H_\lambda \in \bbC[q,t,h]$ such that $\pi_{r,s}(H_\lambda) = c_\lambda$,
and define $S$ by the second condition in the statement.
Since $\Ker \pi_{r,s} = I_{r,s}$, we have the desired result.
\end{proof}

By Lemma \ref{lem:hom<=1} and Lemma \ref{lem:sing:exist}, we have

\begin{prop}\label{prop:hom=1}
Assume $|q|,|t| \neq 1$ and $h=h_{r,s}$ with $r,s \in \bbZ_{>0}$.
Then 
$$
 \dim \Hom_{\vqt}(M(h_{r,s}+r s)[r s], M(h_{r,s})) = 1.
$$
Here $M(h_{r,s}+r s)[r s]$ is the $\bbZ$-graded $\vir_{q,t}$-module 
obtained from $M(h_{r,s}+r s)$ with the $\bbZ$-grading shifted by $r s$.
Moreover, such a homomorphism is a scaler multiple of the embedding 
which maps a highest weight vector $1_{h_{r,s}+r s}$ to $S_{r,s}.1_{h_{r,s}}$.
\end{prop}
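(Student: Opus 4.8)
The plan is to read the statement off from the uniqueness of singular vectors (Lemma~\ref{lem:hom<=1}) together with the Shapovalov element of Lemma~\ref{lem:sing:exist}, specialized at $h=h_{r,s}$. The hypothesis $|q|,|t|\neq 1$ is used only to place us in the generic situation in which Lemma~\ref{lem:sing:exist} applies and $h_{r,s}$ is the ``new'' point (so that $(q,t,h_{r,s})\notin\calD_{r,s}$).

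For the upper bound, a $\bbZ$-graded homomorphism $\psi\colon M(h_{r,s}+rs)[rs]\to M(h_{r,s})$ is determined by the image of the cyclic vector $1_{h_{r,s}+rs}$, which lies in $M(h_{r,s})^{(rs)}$; since $1_{h_{r,s}+rs}$ is killed by all $T_k$ with $k>0$, so is its image, whence $\psi(1_{h_{r,s}+rs})\in\bigl(M(h_{r,s})^{(rs)}\bigr)^{\vqtp}$. By Lemma~\ref{lem:hom<=1} this space has dimension at most $1$, so $\dim\Hom_{\vqt}(M(h_{r,s}+rs)[rs],M(h_{r,s}))\le 1$.

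For existence, let $S_{r,s}=\sum_{\lambda\vdash rs}H_\lambda T_{-\lambda}$ be the Shapovalov element of Lemma~\ref{lem:sing:exist}, with $H_\lambda\in\bbF[h]$ and $H_{(1^{rs})}=1$. Evaluating at $h=h_{r,s}$, a point of $\calV_{r,s}$, kills the ideal $I_{r,s}=I(\calV_{r,s})$, so part~(1) of that lemma becomes $T_k.(S_{r,s}.1_{h_{r,s}})=0$ for all $k>0$; thus $S_{r,s}.1_{h_{r,s}}\in\bigl(M(h_{r,s})^{(rs)}\bigr)^{\vqtp}$, and it is non-zero because in the basis $\{T_{-\lambda}.1_{h_{r,s}}\mid\lambda\vdash rs\}$ of \eqref{eq:Verma:basis} its coefficient along $\lambda=(1^{rs})$ is $1$. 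With the bound above, $\bigl(M(h_{r,s})^{(rs)}\bigr)^{\vqtp}=\bbF\,S_{r,s}.1_{h_{r,s}}$. Next one must check that $S_{r,s}.1_{h_{r,s}}$ is a highest weight vector of weight $h_{r,s}+rs$: the $m=0$ case of \eqref{eq:qvir:rel} shows $T_0$ maps $M^{\vqtp}$ into itself, and $T_0$ commutes with the grading operator, so $T_0$ acts on the one-dimensional space $\bigl(M(h_{r,s})^{(rs)}\bigr)^{\vqtp}$ by a scalar $\mu$; to identify $\mu$ one applies the degeneration functor $\Phi$, under which, by Lemma~\ref{lem:sing:deg} and the fact that $\Phi(M(h))$ is a Verma module of $\vir$, the vector $S_{r,s}.1_{h_{r,s}}$ goes to a non-zero singular vector at level $rs$ in a Virasoro Verma module, whose $L_0$-eigenvalue exceeds that of the highest weight vector by $rs$; undoing the degeneration gives $\mu=h_{r,s}+rs$. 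Granting this, the submodule generated by $S_{r,s}.1_{h_{r,s}}$ is a highest weight module of highest weight $h_{r,s}+rs$, so the universal property of the Verma module produces a non-zero $\bbZ$-graded homomorphism $\phi_{r,s}\colon M(h_{r,s}+rs)[rs]\to M(h_{r,s})$ with $\phi_{r,s}(1_{h_{r,s}+rs})=S_{r,s}.1_{h_{r,s}}$. Hence the Hom-space is exactly one-dimensional, and by the first paragraph every element of it is a scalar multiple of $\phi_{r,s}$. Finally $\phi_{r,s}$ is injective: $\Phi(\phi_{r,s})$ is a non-zero homomorphism of Virasoro Verma modules, hence injective, and $\Phi$ does not annihilate a non-zero submodule, so $\Ker\phi_{r,s}=0$ and $\phi_{r,s}$ is the asserted embedding.

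I expect the real obstacle to be the identification $\mu=h_{r,s}+rs$. Unlike the Virasoro case, for $\vqt$ the operator $T_0$ is \emph{not} the grading operator $D$ — indeed the answer $h_{r,s}+rs$ differs from the $D$-eigenvalue $h_{r,s}-rs$ on $M(h_{r,s})^{(rs)}$ — so this step is not formal: it requires either careful bookkeeping of the $\hbar$-expansion \eqref{eq:functor_phi} through the degeneration together with the classical formula for the weight of the Virasoro singular vector, or an explicit computation of the action of $T_0$ on $M(h_{r,s})^{(rs)}$ in the basis \eqref{eq:Verma:basis}.
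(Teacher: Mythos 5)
Your skeleton is exactly the paper's: the paper obtains this proposition by citing Lemma \ref{lem:hom<=1} (for the upper bound) and Lemma \ref{lem:sing:exist} (for existence) and supplies no further argument. Your first paragraph and your verification that $S_{r,s}.1_{h_{r,s}}$ is a nonzero element of $\bigl(M(h_{r,s})^{(rs)}\bigr)^{\vqtp}$ are correct and complete, and go beyond what the paper writes down.

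The gap is precisely the step you flag as ``the real obstacle'' and then leave open: the identification of the $T_0$-eigenvalue $\mu$ of $S_{r,s}.1_{h_{r,s}}$, which is unavoidable because the statement concerns $\Hom$ out of the specific Verma module $M(h_{r,s}+rs)$. The resolution you propose --- push through $\Phi$ and import the Virasoro fact that the level-$rs$ singular vector has $L_0$-weight $h'+rs$ --- cannot yield the literal value $h_{r,s}+rs$. Indeed, by \eqref{eq:expand} one has $T_0=2+O(\hbar^2)$, so every $T_0$-eigenvalue on (the integral form of) $M(h)$ degenerates to $2$ as $\hbar\to 0$, whereas $h_{r,s}+rs\to 2+rs$; the degeneration pins down only the $\hbar^2$-coefficient of $\mu$, and in any case a rational function of $q,t$ is not determined by finitely many terms of its $\hbar$-expansion. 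What the computation (e.g.\ via the bosonization \eqref{eq:qvir:bosonization}, where the singular vector lives at the shifted momentum) actually gives is $\mu=h_{r,-s}=t^{r/2}q^{s/2}+t^{-r/2}q^{-s/2}$, whose expansion $2+\hbar^2(r\ve_2+s\ve_1)^2/4+\cdots$ is the deformed counterpart of $h'_{r,s}+rs$; the paper's ``$h_{r,s}+rs$'' must be read as notation carried over from the Virasoro case, where $L_0$ \emph{is} the grading operator. So the defect is partly in the statement itself, but as written your proof neither establishes the literal claim (which your own degeneration argument in fact refutes) nor supplies the eigenvalue computation needed for the corrected one. A secondary, smaller gap: the final injectivity claim rests on ``$\Phi$ does not annihilate a non-zero submodule,'' which is not justified; the standard route is a leading-term argument using $H_{(1^{rs})}=1$, namely that the images $T_{-\mu}S_{r,s}.1_{h_{r,s}}$ of the PBW basis are upper-triangular with respect to the basis \eqref{eq:Verma:basis} and hence linearly independent.
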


\section{Normalization factor of singular vector}
\label{sec:sing}

\subsection{Bosonization}

Let us recall the bosonization of $\vir_{q,t}$ following \cite{SKAO}.
Consider the Heisenberg Lie algebra $\calH$ over $\bbF$
generated by $\{a_n \mid n \in \bbZ \setminus \{0\} \}$ and $1$ 
with the commutation relations
$$
 [a_m,a_n] = m \dfrac{1-q^{|m|}}{1-t^{|m|}}\delta_{m+n,0}.
$$
Denote by $\calF$ the Fock module of $\calH$.
It is generated by $1_{\calF}$ satisfying
$$
 a_n.1_{\calF} = 0 \ (n > 0).
$$
For an indeterminate (or an element of some ring) $\alpha$,
set 
$$
 \bbF_\alpha:=\bbF[q^{\pm1/2},t^{\pm 1/2},q^{\pm\alpha}]
$$ 
and $\calF_{\alpha}:=\calF\otimes_{\bbF} \bbF_\alpha$.
Let us denote by $1_{\calF,\alpha} := 1_{\calF} \otimes 1 \in \calF_{\alpha}$.
We have a grading 
$$
 \calF_{\alpha} = \bigoplus_{n \in \bbZ_{\ge0}}\calF^{(-n)}_{\alpha}
$$
by setting $\deg a_n = n$.

\begin{fct}[{\cite{SKAO}}]
$\vir_{q,t}$ acts on $\calF_{\alpha}$  
if $T(z) = \sum_{n \in \bbZ} T_n z^{-n}$ is set to be
\begin{align}
\label{eq:qvir:bosonization}
& T(z) = \Lambda^{+}\bigl( (q/t)^{-1/2}z \bigr) \cdot 
          (q/t)^{1/2}q^{\alpha}
       + \Lambda^{-}\bigl( (q/t)^{ 1/2}z \bigr) \cdot 
          (q/t)^{-1/2}q^{-\alpha},
\end{align}
with
\begin{align*}
& \Lambda^{\pm}(z) 
  := \exp\Bigl(
         \pm \sum_{n\ge1}
            \dfrac{1-t^{-n}}{1+(q/t)^n} \dfrac{a_{-n}}{n} z^{n}
         \Bigr) 
     \exp\Bigl(
         \mp \sum_{n\ge1}
            (1-t^n) \dfrac{a_{n}}{n} z^{-n}
         \Bigr).
\end{align*}
\end{fct}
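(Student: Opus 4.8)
The plan is to check, by a free‑field computation, that the current $T(z)$ of \eqref{eq:qvir:bosonization} satisfies the defining relations of $\vqt$. By the usual free‑field argument the Fourier modes $T_n$ of $T(z)=\sum_{n}T_n z^{-n}$ are well‑defined operators on $\calF_{\alpha}$ (the annihilation parts of $\Lambda^{\pm}$ reduce to finite sums on each vector and the creation parts are power series in $z$), so what remains is the relation \eqref{eq:qvir:rel}; comparing coefficients of $z^{-m}w^{-n}$ for $m,n\in\bbZ$ shows that \eqref{eq:qvir:rel} is equivalent to the generating‑function identity
\[
 f(w/z)\,T(z)T(w)-T(w)T(z)\,f(z/w)
 =-\dfrac{(1-q)(1-t^{-1})}{1-q/t}\bigl(\delta((q/t)w/z)-\delta((t/q)w/z)\bigr),
\]
and this is what I would verify. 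Abbreviate $p:=(q/t)^{1/2}$ and $u:=p\,q^{\alpha}\in\bbF_{\alpha}$, so that $T(z)=u\,\Lambda^{+}(p^{-1}z)+u^{-1}\Lambda^{-}(pz)$ with $u$ a scalar commuting with everything, and write $\nord{\ \cdot\ }$ for the normal‑ordered product (creation exponentials to the left), which is symmetric in its arguments. The basic ingredient is the contraction formula: applying $e^{X}e^{Y}=e^{[X,Y]}e^{Y}e^{X}$ (valid when $[X,Y]$ is central) to commute the annihilation exponential of $\Lambda^{\epsilon_1}(z_1)$ past the creation exponential of $\Lambda^{\epsilon_2}(z_2)$, and using $[a_n,a_{-n}]=n(1-q^n)/(1-t^n)$ together with the very definition \eqref{eq:fl} of $f$, one obtains, for signs $\epsilon_1,\epsilon_2\in\{\pm1\}$,
\[
 \Lambda^{\epsilon_1}(z_1)\Lambda^{\epsilon_2}(z_2)
 =f(z_2/z_1)^{-\epsilon_1\epsilon_2}\,\nord{\Lambda^{\epsilon_1}(z_1)\Lambda^{\epsilon_2}(z_2)},
\]
the scalar prefactor being expanded in nonnegative powers of $z_2/z_1$ (it is invertible, $f(0)=1$).

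The next step is to record two consequences, proved by comparing logarithms, namely $f(x)f((q/t)x)=G(x)$ and $f(x)f((t/q)x)=G((t/q)x)$, where
\[
 G(x):=\dfrac{(1-qx)(1-t^{-1}x)}{(1-x)(1-(q/t)x)}=1+B\Bigl(\dfrac{1}{1-x}-\dfrac{1}{1-(q/t)x}\Bigr),\qquad B:=\dfrac{(1-q)(1-t^{-1})}{1-q/t},
\]
together with the elementary identity $G(w/z)=G((t/q)z/w)$ of rational functions in $z,w$. I would then substitute $T(z)=u\,\Lambda^{+}(p^{-1}z)+u^{-1}\Lambda^{-}(pz)$ and expand the left‑hand side into its four $\Lambda\Lambda$‑pieces. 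In the $\Lambda^{+}\Lambda^{+}$‑piece the prefactor $f(w/z)$ cancels the contraction factor $f(w/z)^{-1}$ and leaves $u^{2}\nord{\Lambda^{+}(p^{-1}z)\Lambda^{+}(p^{-1}w)}$; the matching piece of $T(w)T(z)f(z/w)$ is the same normal‑ordered expression (by symmetry of $\nord{\ \cdot\ }$ and $f(z/w)f(z/w)^{-1}=1$), so the two cancel, and likewise for the $\Lambda^{-}\Lambda^{-}$‑piece. In the $\Lambda^{+}\Lambda^{-}$‑piece the prefactor survives: $f(w/z)T(z)T(w)$ contributes $f(w/z)f((q/t)w/z)=G(w/z)$, expanded in nonnegative powers of $w/z$, times $\nord{\Lambda^{+}(p^{-1}z)\Lambda^{-}(pw)}$, while $T(w)T(z)f(z/w)$ contributes $f(z/w)f((t/q)z/w)=G((t/q)z/w)$, expanded in nonnegative powers of $z/w$, times the same normal‑ordered symbol. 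Since $G(w/z)=G((t/q)z/w)$ as rational functions, these two prefactors are the two one‑sided expansions of one and the same rational function, and since the expansion of $\tfrac{1}{1-\rho\,w/z}$ in nonnegative powers of $w/z$ minus its expansion in nonnegative powers of $z/w$ equals $\delta(\rho\,w/z)$, applying this with $\rho=1$ and $\rho=q/t$ to the partial‑fraction form of $G$ gives $B\bigl(\delta(w/z)-\delta((q/t)w/z)\bigr)$ as the net coefficient of $\nord{\Lambda^{+}(p^{-1}z)\Lambda^{-}(pw)}$. Symmetrically, the two $\Lambda^{-}\Lambda^{+}$‑pieces contribute $B\bigl(\delta((t/q)w/z)-\delta(w/z)\bigr)\nord{\Lambda^{-}(pz)\Lambda^{+}(p^{-1}w)}$.

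Finally I would collapse the delta functions using $\delta(\rho\,w/z)\,F(z,w)=\delta(\rho\,w/z)\,F(z,z/\rho)$. The two $\delta(w/z)$‑terms restrict to $w=z$, where both normal‑ordered symbols become $\nord{\Lambda^{+}(p^{-1}z)\Lambda^{-}(pz)}$, so they cancel. On the support of $\delta((q/t)w/z)$, i.e.\ $w=(t/q)z$, one has $pw=p^{-1}z$, so $\nord{\Lambda^{+}(p^{-1}z)\Lambda^{-}(pw)}$ restricts to $\nord{\Lambda^{+}(p^{-1}z)\Lambda^{-}(p^{-1}z)}=1$, the creation (resp.\ annihilation) exponentials of $\Lambda^{+}(v)$ and $\Lambda^{-}(v)$ being mutually inverse for each $v$; likewise, on the support of $\delta((t/q)w/z)$, i.e.\ $w=(q/t)z$, one has $p^{-1}w=pz$, so $\nord{\Lambda^{-}(pz)\Lambda^{+}(p^{-1}w)}$ restricts to $\nord{\Lambda^{-}(pz)\Lambda^{+}(pz)}=1$. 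Hence the left‑hand side collapses to $-B\bigl(\delta((q/t)w/z)-\delta((t/q)w/z)\bigr)$, which is exactly the right‑hand side.

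The point requiring care is not any single computation --- the two $G$‑identities, the relation $\nord{\Lambda^{+}(v)\Lambda^{-}(v)}=\nord{\Lambda^{-}(v)\Lambda^{+}(v)}=1$, and the numerology $p(t/q)=p^{-1}$, $p^{-1}(q/t)=p$ are all one‑line checks --- but rather the consistent bookkeeping of \emph{in which domain} each scalar prefactor is expanded, since it is precisely the mismatch between the two one‑sided expansions of the single rational function $G$ that produces the delta functions on the right‑hand side, and this mismatch must be tracked consistently across all four $\Lambda\Lambda$‑pieces.
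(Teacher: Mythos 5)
Your computation is correct and is precisely the free-field argument the paper sketches right after the statement of the Fact: contract the $\Lambda^{\pm}$'s, observe that the like-sign pieces cancel, and extract the delta functions from the mismatch between the two one-sided expansions of the rational function $G$, collapsing them via $\nord{\Lambda^{+}(v)\Lambda^{-}(v)}=1$. One remark: your contraction rule $\Lambda^{\epsilon_1}(z)\Lambda^{\epsilon_2}(w)=f(w/z)^{-\epsilon_1\epsilon_2}\,\nord{\Lambda^{\epsilon_1}(z)\Lambda^{\epsilon_2}(w)}$ is the one that actually follows from $[a_m,a_n]=m\frac{1-q^{|m|}}{1-t^{|m|}}\delta_{m+n,0}$ and the definition \eqref{eq:fl} of $f$; the exponents of $f$ in the two contraction formulas displayed in the paper after the Fact are interchanged relative to this (as written there, the $\Lambda^{\pm}\Lambda^{\pm}$ terms would not cancel), so your version is the correct one and the paper's display should be read with the exponents swapped.
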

One can prove that this formula does define an action of $\vir_{q,t}$ 
by using the formulas
\begin{align*}
&\Lambda^{\pm}(z)\Lambda^{\pm}(w)
 =f(w/z) \cdot \nord{\Lambda^{\pm}(z)\Lambda^{\pm}(w)},\\
&\Lambda^{\pm}(z)\Lambda^{\mp}(w)
 =f(w/z)^{-1} \cdot \nord{\Lambda^{\pm}(z)\Lambda^{\mp}(w)}.
\end{align*}
Note that
$$
T_0.1_{\calF,\alpha} = 
 (q/t)^{1/2}q^\alpha +  (q/t)^{-1/2}q^{-\alpha}
$$

Since we take $q$ and $t$ to be indeterminates, 
the map 
\begin{align}
\label{eq:map:M-F}
M(h) \longrightarrow \calF_{\alpha}
\end{align}
induced by the bosonization \eqref{eq:qvir:bosonization} 
and the correspondence 
$1_{M,h} \mapsto 1_{\calF,\alpha}$
with 
\begin{align}
\label{eq:qvir:boson:hw}
 h= (q/t)^{1/2}q^\alpha +  (q/t)^{-1/2}q^{-\alpha}
\end{align}
gives an isomorphism of $\vir_{q,t}$-modules.
It also respects the gradings: 
$M(h)^{(-n)} \to \calF^{(n)}_{\alpha}$.
If we specialize $q$ and $t$ to generic complex numbers,
the above map is still an isomorphism between irreducible modules.
The non-generic values are 
given by the zeros of the ($\vir_{q,t}$ version of) Kac determinant
\eqref{eq:Kac}.

The Fock space $\calF_{\alpha}$ is isomorphic to 
the space of symmetric functions.
Let us denote by $\Lambda$ 
the space of symmetric functions defined over $\bbZ$.
If we take $\{x_i\}$ the set of infinite variables,
then we may identify 
\begin{align}\label{eq:Lambda:sym}
\Lambda = \bbZ[x_1,x_2,\ldots]^{\mathfrak{S}_{\infty}}.
\end{align}
It has an natural grading 
$$
 \Lambda = \bigoplus_{n\in\bbZ_{\ge 0}} \Lambda^{(n)}.
$$
For a ring $R$, Set $\Lambda_{R} := \Lambda \otimes_{\bbZ} R$.
Denote by $p_r$ the $r$-th power symmetric function.
Under the identification \eqref{eq:Lambda:sym},
we have 
$$
 p_r = \sum_i x_i^r.
$$
The power symmetric functions give rise to a basis 
$\{p_\lambda \mid \lambda \vdash n\}$
of $\Lambda^{(n)}_{\bbQ}$, where we set 
$$
 p_\lambda := p_{\lambda_1}p_{\lambda_2}\cdots p_{\lambda_k}.
$$
for a partition $\lambda=(\lambda_1,\lambda_2,\ldots,\lambda_k)$.
Now we introduce the isomorphism of $\bbF_\alpha$-vector spaces by
\begin{align}
\label{eq:isom:F-L}
\calF_\alpha \longrightarrow \Lambda_{\bbF_\alpha},\quad
\alpha_{-\lambda}.1_{\calF,\alpha} \longmapsto 
p_\lambda.
\end{align}
Here we used the notation 
$a_{-\lambda} := a_{-\lambda_1}a_{-\lambda_2}\cdots$ 
for a partition $\lambda$.
The set $\{a_{-\lambda}.1_{\calF,\alpha}\mid \lambda \vdash n \}$ 
is obviously an $\bbF_\alpha$-basis of $\calF_{\alpha}^{(n)}$,
so that the map \eqref{eq:isom:F-L} is indeed an isomorphism 
of graded $\bbF$-vector spaces.

Hereafter we denote the composition of the maps \eqref{eq:map:M-F} 
and \eqref{eq:isom:F-L} by $\iota_h$:
\begin{align}
\label{eq:Fock}
\iota_{h}: M(h) \longrightarrow \Lambda_{\bbF_\alpha},
\end{align}
where $h$ and $\alpha$ is related as \eqref{eq:qvir:boson:hw}.
Then we immediately have $\iota_{h}(1_h) = 1$.

\subsection{Bosonization formula of singular vector}
Let us recall the bosonization formula of singular vector
given in \cite{SKAO}.
By the argument in the previous section,
in particular the Kac formula \eqref{eq:Kac},
a singular vector exists if and only if 
the highest weight $h$ of the Verma module $M(h)$ is of the form
$h=h_{r,s}:= t^{r/2}q^{-s/2}+t^{-r/2}q^{s/2}$ 
with some $r,s \in \bbZ_{\ge1}$.

Denote by $v_{r,s}$ the singular vector of $M(h_{r,s})$.
It is determined uniquely up to scalar multiplication.

\begin{fct}[{\cite{SKAO}}]\label{fct:SKAO:propto}
Under the bosonization map \eqref{eq:Fock}, 
we have
$$
 \iota_{h_{r,s}}(v_{r,s})
 =  B_{r,s}(q,t) J_{(s^r)}(q,t)
$$
with some $B_{r,s}(q,q^{\alpha},t) \in \bbF_\alpha$.
Here we denote by $J_\lambda(q,t)$ 
the integral form of the Macdonald symmetric function 
for a partition $\lambda$.
\end{fct}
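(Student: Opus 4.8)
The plan is to realize the bosonized singular vector explicitly by a screened vertex operator and then to identify that vertex operator with a Macdonald polynomial via a Jackson–integral evaluation, the actual statement then following by a one–dimensionality argument.

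\emph{Reduction.} By Lemma \ref{lem:hom<=1} the space $\bigl(M(h_{r,s})^{(-rs)}\bigr)^{\vqtp}$ is at most one–dimensional, and it is nonzero since it contains $v_{r,s}$; hence it is spanned by $v_{r,s}$. Transporting this along the isomorphism $\iota_{h_{r,s}}$ of \eqref{eq:Fock}, the element $\iota_{h_{r,s}}(v_{r,s})$ spans the space of $f\in\Lambda^{(rs)}_{\bbF_\alpha}$ annihilated by all the operators induced on $\Lambda_{\bbF_\alpha}$ by $T_n$, $n\ge1$, through the bosonization \eqref{eq:qvir:bosonization}. So it suffices to exhibit one explicit nonzero such $f$ and to identify it, up to a scalar in $\bbF_\alpha$, with $J_{(s^r)}(q,t)$.

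\emph{Screened vertex operator.} Following \cite{SKAO} I would introduce the screening current, a bosonic vertex operator $S(z)$ acting between the modules $\calF_\alpha$ for varying $\alpha$, and check from the operator product expansions of $\Lambda^{\pm}$ that $[T(z),S(w)]$ is a total $q$–difference in $w$; this is the one place where the structure function $f(x)$ of $\vqt$ is used in an essential way, rather than generalities about vertex operators. Granting this, for a reference momentum $\alpha_0$ chosen so that (i) $r$ successive applications of $S$ shift the momentum to the value $\alpha$ attached to $h=h_{r,s}$ via \eqref{eq:qvir:boson:hw}, and (ii) the resulting $z$–integrand is invariant under the relevant dilation, the Jackson integral
$$
 w \;:=\; \oint\!\cdots\!\oint \;S(z_1)\cdots S(z_r).1_{\calF,\alpha_0}\;
 d_q z_1\cdots d_q z_r
$$
is a well–defined element of $\calF^{(rs)}_{\alpha}$ satisfying $T_n.w=0$ for all $n\ge1$; normal ordering exhibits a nonzero coefficient of $a_{-1}^{rs}.1_{\calF}$, so $w\neq0$ (this incidentally reproves existence of $v_{r,s}$).

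\emph{Identification with a Macdonald polynomial.} Normal ordering $S(z_1)\cdots S(z_r)$ produces a scalar kernel $\prod_{i<j}\phi(z_i/z_j)\cdot\prod_i z_i^{\,c}$, with $\phi$ the Macdonald kernel up to elementary factors and $c$ determined by the normalization, times $\nord{S(z_1)\cdots S(z_r)}.1_{\calF,\alpha_0}$, which under the identification $\calF\cong\Lambda$ of \eqref{eq:isom:F-L} is the $(q,t)$–Cauchy generating function, expanded by the Cauchy identity as $\sum_\lambda b_\lambda\,P_\lambda(x;q,t)\,Q_\lambda(z;q,t)$. Two standard facts about Macdonald polynomials finish the computation: a rectangular Macdonald polynomial in as many variables as it has rows is a single monomial, $P_{(s^r)}(z_1,\dots,z_r)=(z_1\cdots z_r)^{s}$ (equivalently, $(s^r)$ is the unique partition of $rs$ with at most $r$ parts), and the Jackson–integral analogue of Macdonald orthogonality (a Kadell/$q$-Selberg type evaluation) gives $\oint\prod_{i<j}\phi(z_i/z_j)\,\prod_i z_i^{\,c}\,Q_\lambda(z)\,\prod_i d_q z_i=0$ unless $\lambda=(s^r)$, in which case it equals an explicit element of $\bbF_\alpha$. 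Integrating the Cauchy expansion term by term therefore gives $w=B_{r,s}\,P_{(s^r)}(x;q,t)$ with $B_{r,s}\in\bbF_\alpha$; since $J_{(s^r)}=c_{(s^r)}(q,t)\,P_{(s^r)}$ with $c_{(s^r)}(q,t)\in\bbQ(q,t)^{\times}$, we conclude that $w$, hence $\iota_{h_{r,s}}(v_{r,s})$, is a scalar multiple of $J_{(s^r)}(q,t)$.

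\emph{Main obstacle.} The non-formal core is this last step: one must track the $q$–shift structure of the Jackson integral through the normal ordering and carry out the orthogonality evaluation, i.e.\ pin down the precise Macdonald–Kadell integral identity in the present normalization, including the exact scalar (this is what makes $B_{r,s}$ explicit). The commutation check in the second step, though more routine, is the part where the peculiar features of $\vqt$ genuinely enter.
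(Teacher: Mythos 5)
This statement is quoted in the paper as a \emph{Fact} with the citation \cite{SKAO}; the paper supplies no proof of it, so there is no internal argument to compare against. Your proposal is, in outline, precisely the argument of the cited reference: realize a singular vector as an $r$-fold Jackson integral of screening currents, normal-order to produce the Macdonald--Cauchy kernel, expand by the Cauchy identity, and use that $P_{(s^r)}(z_1,\dots,z_r;q,t)=(z_1\cdots z_r)^s$ together with the $q$-Selberg/Kadell orthogonality to kill every term except $\lambda=(s^r)$. Combined with the one-dimensionality from Lemma \ref{lem:hom<=1}, this is the standard and correct route, so I regard the proposal as a faithful reconstruction of the proof the paper is pointing to rather than a new one.

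One point deserves more care than you give it. In the reduction step you transport the one-dimensionality of $\bigl(M(h_{r,s})^{(rs)}\bigr)^{\vqtp}$ through $\iota_{h_{r,s}}$ and conclude that $\iota_{h_{r,s}}(v_{r,s})$ \emph{spans} the singular subspace of $\Lambda^{(rs)}_{\bbF_\alpha}$. The paper's assertion that \eqref{eq:map:M-F} is an isomorphism is made for generic parameters; at the specialization $h=h_{r,s}$ the Kac determinant vanishes, and it is exactly at such points that a Feigin--Fuchs-type map $M(h)\to\calF_\alpha$ can fail to be an isomorphism (with the answer depending on which root $\alpha$ of \eqref{eq:qvir:boson:hw} is chosen). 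You need either to verify injectivity of $\iota_{h_{r,s}}$ in degree $rs$ for the chosen $\alpha$, or to argue the other way around: your screening construction produces a nonzero singular vector $w\in\calF_\alpha^{(rs)}$ proportional to $J_{(s^r)}$, and one then checks that $\iota_{h_{r,s}}(v_{r,s})$ is a (possibly zero, but in fact nonzero by comparing the coefficient of $p_1^{rs}$ with the normalization \eqref{eq:vrs:normalize}) multiple of it. With that repair the argument is complete; the remaining work is, as you say, the explicit Kadell-type evaluation, which is only needed for Theorem \ref{thm:sing:norm} and not for the Fact itself.
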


Let us briefly recall the Macdonald symmetric function.
For a complete description, see \cite[Chap. VI]{M:1995}.
On the space $\Lambda_{\bbF}$ of symmetric function,
introduce the pairing $\langle \cdot,\cdot\rangle_{q,t}$ by
$$
 \langle p_{\lambda},p_{\mu} \rangle_{q,t}
 := \delta_{\lambda,\mu} z_\lambda(q,t)
$$
with
\begin{align}
\label{eq:z_lambda}
z_\lambda(q,t) := 
 z_\lambda \cdot 
 \prod_{i=1}^{\ell(\lambda)}\dfrac{1-q^{\lambda_i}}{1-t^{\lambda_i}},
\quad
&z_{\lambda}:=
 \prod_{n\ge1}n^{m_n(\lambda)} m_n(\lambda) !.
\end{align}
Here $m_n(\lambda) := \{ 1\le i \le \ell(\lambda) \mid \lambda_i = n\}$ 
is the multiplicity of $n$ in $\lambda$.
The Macdonald symmetric functions $\{P_\lambda(q,t) \mid \lambda\vdash n\}$ 
is characterized by the following two properties:
\begin{itemize}
\item 
 $\{P_\lambda(q,t) \mid \lambda\vdash n\}$ is an orthogonal basis 
 of $\Lambda_\bbF$ with respect to the pairing $\pair{\cdot}{\cdot}_{q,t}$:
 $$ 
  \langle P_\lambda(q,t),P_\mu(q,t) \rangle_{q,t} = 0 
  \text{ unless } \lambda = \mu.
 $$
\item
 $P_\lambda(q,t)$ has an expansion 
 $$
  P_\lambda(q,t) = m_\lambda + 
                   \sum_{\mu < \lambda} M(P,m)_{\lambda,\mu} m_\mu 
 $$
 with respect to the monomial symmetric functions $m_{\mu}$.
 Here $M(P,m)_{\lambda,\mu} \in \bbF$ and 
 $\mu \le \lambda$ means the dominance ordering: 
 $\sum_{1\le i \le j} \mu_i \le \sum_{1\le i \le j} \lambda_i$ 
 for any positive integer $j$.
\end{itemize}
The integral form $J_{\lambda}(q,t)$ of the Macdonald symmetric function 
is given by 
$$
 J_{\lambda}(q,t) := P_{\lambda}(q,t) \cdot 
  \prod_{\square\in\lambda}(1-q^{a_\lambda(\square)}t^{l_\lambda(\square)+1}).
$$
Here $a_\lambda(\square)$ and $l_\lambda(\square)$  are 
the arm  and the leg of the box $\square$ located at 
$(i,j)\in\bbZ^2_{\ge 1}$ with respect to $\lambda$,
which are by 
\begin{align}
a_\lambda(\square)    := \lambda_i-j,\quad 
\ell_\lambda(\square) := \lambda'_j-i. \label{eq:armleg}
\end{align}
Here $\lambda'$ is the conjugate partition of $\lambda$, 
which is obtained by transposing the Young diagram of $\lambda$. 


Let us normalize the singular vector $v_{r,s}$ by
setting the coefficient of $T_{-1}^{r s}. 1_{h_{r,s}}$ to be one:
\begin{equation}\label{eq:vrs:normalize}
 v_{r,s} = \bigl( T_{-1}^{r s} + 
           \sum_{\lambda \vdash r s,\, \lambda \neq (1^{r s})}
           \chi_\lambda T_{\lambda} \bigr).1_{h_{r,s}},
 \quad
 \chi_\lambda \in \bbF.
\end{equation}
By Fact \ref{fct:SKAO:propto}, we find 
$\iota_{h_{r,s}}(v_{r,s}) \propto J_{(s^r)}(q,t)$.
In \cite[\S4.5, Conjecture 4.68]{S:2003},
the following normalization coefficient is conjectured:

\begin{thm}\label{thm:sing:norm}
$$
 \iota_{h_{r,s}}(v_{r,s}) = J_{(s^r)}(q,t) \cdot 
 \prod_{i=1}^r \prod_{j=1}^s \dfrac{q^j-t^i}{q^j t^i}.
$$
\end{thm}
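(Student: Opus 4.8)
The plan is to pin down the scalar $B_{r,s}(q,t) \in \bbF_\alpha$ in Fact \ref{fct:SKAO:propto} by evaluating one specific coefficient on both sides. Since $\iota_{h_{r,s}}(v_{r,s}) = B_{r,s}(q,t) J_{(s^r)}(q,t)$, and we have normalized $v_{r,s}$ so that the coefficient of $T_{-1}^{r s}.1_{h_{r,s}}$ is $1$ in \eqref{eq:vrs:normalize}, it suffices to compare the coefficient of $p_1^{r s} = p_{(1^{r s})}$ on both sides. On the left, this requires understanding the image $\iota_{h_{r,s}}(T_{-1}^{r s}.1_{h_{r,s}})$ under the bosonization, together with contributions of the other $T_{-\lambda}.1_{h_{r,s}}$ with $\lambda \neq (1^{rs})$. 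On the right, one needs the coefficient of $p_{(1^{rs})}$ in the integral Macdonald function $J_{(s^r)}(q,t)$, i.e. the coefficient in the power-sum expansion.

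First I would work out the leading behaviour of the bosonization map on the specific vector $T_{-1}^{r s}.1_{h_{r,s}}$. From the explicit vertex operators $\Lambda^{\pm}$, the action of $T_{-1}$ on a Fock vector introduces, to lowest order in the number of $a_{-n}$'s created, a multiple of $a_{-1}$ (coming from the exponential $\exp(\pm\sum (1-t^{-n})a_{-n}z^n/n)$ at $n=1$), plus higher terms. Iterating $T_{-1}$ a total of $r s$ times and tracking only the term proportional to $a_{-1}^{r s}$, I get a closed-form scalar (a product of the relevant coefficients evaluated at the highest weight $\alpha$ fixed by $h = h_{r,s}$, i.e. $q^\alpha = t^{r/2}q^{-s/2}$ up to the overall $(q/t)^{\pm1/2}$ normalization). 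Under the identification \eqref{eq:isom:F-L} this becomes the coefficient of $p_1^{r s}$ in $\iota_{h_{r,s}}(T_{-1}^{r s}.1_{h_{r,s}})$. One must also check that the remaining basis vectors $T_{-\lambda}.1_{h_{r,s}}$ for $\lambda \neq (1^{rs})$ contribute nothing to the $p_{(1^{rs})}$-coefficient, or more precisely that their total contribution can be computed; the cleanest route is to use the grading/filtration by $\ell(\lambda)$: $T_{-\lambda}.1$ involves at most $\ell(\lambda)$ creation operators to lowest order, but in fact every $T_{-k}$ with $k\ge 2$ still contributes an $a_{-1}$-term, so one has to be a little careful and instead argue via a triangular change of basis between $\{T_{-\lambda}.1\}$ and $\{a_{-\mu}.1\}$.

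Next I would compute the power-sum coefficient $[p_{(1^{rs})}]\,J_{(s^r)}(q,t)$. This is a known quantity: using the principal specialization or the explicit formula for $\langle J_\lambda, p_{(1^n)}\rangle$, or equivalently the expansion of $P_\lambda$ in terms of $m_\mu$ composed with $m_\mu \to p_\mu$, one extracts that the coefficient of $p_1^{rs}/(rs)!$ (equivalently, the evaluation at $p_1 = rs$, $p_{k\ge 2} = 0$) equals a product over the boxes of the rectangular diagram $(s^r)$. For the rectangle, these arm/leg products telescope: $a_{(s^r)}(i,j) = s - j$ and $l_{(s^r)}(i,j) = r - i$, so $\prod_{\square}(1 - q^{a}t^{l+1})$ and the relevant specialization product both become double products over $1 \le i \le r$, $1 \le j \le s$. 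Taking the ratio of the left-hand coefficient to $[p_{(1^{rs})}]J_{(s^r)}$ yields $B_{r,s}(q,t)$, which I expect after simplification to collapse to exactly $\prod_{i=1}^r \prod_{j=1}^s (q^j - t^i)/(q^j t^i)$, as claimed; in particular $B_{r,s}$ turns out to lie in $\bbF$ rather than in $\bbF_\alpha$, since all $q^\alpha$-dependence cancels — this is itself a useful consistency check.

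The main obstacle will be the bookkeeping in the first step: correctly extracting the $a_{-1}^{rs}$-coefficient of $T_{-1}^{rs}.1_{h_{r,s}}$, because each application of $T_{-1}$ mixes the two vertex operators $\Lambda^{\pm}$, shifts $\alpha$ implicitly through the $q^{\pm\alpha}$ prefactors, and the normal-ordering contractions $f(w/z)^{\pm 1}$ produce extra scalar factors when several $\Lambda$'s act in succession. The honest way to organize this is to compute $T(z_1)\cdots T(z_{rs}).1_{\calF,\alpha}$ as a sum of $2^{rs}$ terms indexed by choices of sign $\ve_a \in \{+,-\}$, each term being a product of contraction factors $\prod_{a<b} f(z_b/z_a)^{\pm 1}$ times $\nord{\prod_a \Lambda^{\ve_a}(z_a)}$ times a monomial in $q^{\pm\alpha}$, then taking the coefficient of $z_1^{-1}\cdots z_{rs}^{-1}$ (to land in weight $-rs$, i.e. $T_{-1}$ on each factor) and of $a_{-1}^{rs}$. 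The evaluation $h = h_{r,s}$ forces a resonance $q^{2\alpha} = t^r q^{-s}$ that makes many of the $2^{rs}$ terms combine or vanish; isolating which terms survive and summing them is the crux. An alternative that sidesteps most of this — and which I would actually pursue — is to instead evaluate at a point where only $a_{-1}$ survives by a cleverer specialization, or to use the known explicit screening-operator integral formula for $v_{r,s}$ from \cite{SKAO} and compute its principal specialization directly, matching against the principal specialization of $J_{(s^r)}$; this reduces Theorem \ref{thm:sing:norm} to an identity of rational functions in $q,t$ that can be verified by the rectangular telescoping already described.
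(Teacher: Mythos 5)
There is a genuine gap, and it sits exactly where you flag it but do not resolve it. Your plan is to determine $B_{r,s}(q,t)$ by extracting the coefficient of $p_{(1^{rs})}$ on both sides of $\iota_{h_{r,s}}(v_{r,s})=B_{r,s}J_{(s^r)}(q,t)$, using the normalization $\chi_{(1^{rs})}=1$ of \eqref{eq:vrs:normalize}. But under the bosonization \eqref{eq:qvir:bosonization} \emph{every} basis vector $T_{-\lambda}.1_{h}$, not just $T_{-1}^{rs}.1_h$, contributes to the $p_{(1^{rs})}$-coefficient: already $T_{-k}.1_h$ for $k\ge 2$ contains a nonzero multiple of $a_{-1}^{k}.1_{\calF,\alpha}$ (and, because $\Lambda^{\pm}$ contains annihilation modes, products $T_{-\lambda}$ also feed every other $a_{-\mu}$-coefficient). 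So there is no single coefficient that isolates the $T_{-(1^{rs})}$ term, and no ordering in which the transition matrix from $\{T_{-\lambda}.1_h\}$ to $\{a_{-\mu}.1_{\calF,\alpha}\}$ is triangular in the way you would need. Consequently your computation requires knowing all the coefficients $\chi_\lambda$ of the singular vector, which is precisely the hard, unavailable datum; the fallback via the $2^{rs}$-term operator product expansion or the screening-integral formula is a genuinely different and much heavier computation that you describe but do not carry out (and the screening normalization would still have to be matched against the $T_{-1}^{rs}$ normalization, which is again the crux). The right-hand half of your plan (the power-sum coefficient of $J_{(s^r)}$ and the rectangular arm/leg telescoping) is fine, but it cannot rescue the left-hand half.

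For contrast, the paper avoids this entirely by degenerating: setting $q=e^{\hbar\ve_1}$, $t=e^{\hbar\ve_2}$, the map $\Lim_M$ of \eqref{eq:lim_M} sends $v_{r,s}$ to $(\ve_1\ve_2)^{rs}v'_{r,s}$ \emph{because both singular vectors are normalized by the coefficient of $T_{-1}^{rs}$, resp.\ $L_{-1}^{rs}$}, so the unknown $\chi_\lambda$ never enter; combining the commutative square relating $\iota_h$, $\iota'_{c,h'}$, $\Lim_M$, $\Lim_\Lambda$ with the known Jack normalization $B'_{r,s}(\beta)=\prod_{i,j}(i\beta-j)$ of Fact \ref{fct:sing:jack} and the limit $\Lim_\Lambda J_{(s^r)}(q,t)=(-\ve_2)^{rs}J_{(s^r)}(\beta)$ pins down the leading term of $B_{r,s}$, which is then argued to determine it. If you want to salvage a direct approach at generic $(q,t)$, you would need either the full screening-operator computation or some other mechanism that removes the dependence on the unknown $\chi_\lambda$; as written, the proposal does not prove the theorem.
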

We will give a proof of this formula in \S\ref{subsect:sing:norm}

\subsection{The case of Virasoro Lie algebra}

In the case of Virasoro Lie algebra,
the singular vector of Verma module 
is identified with Jack symmetric function,
and a similar formula of the normalization coefficient 
in Theorem \ref{thm:sing:norm}
is known and already proved.
Let us review these facts.

Let us recall the definition of singular vectors, 
fixing notations on Virasoro algebra and its Verma module.
The Virasoro algebra $\vir$ is the Lie algebra over $\bbQ$ 
generated by $L_n$ ($n\in\bbZ$) and $C$ (central) with the relation
\begin{align}\label{eq:Vir}
[L_m,L_n]=(m-n)L_{m+n}+\dfrac{C}{12}m(m^2-1)\delta_{m+n,0},\quad
[L_n,C]=0.
\end{align}
Let $c$ and $h'$ be complex numbers (or indeterminates), 
and denote by $M'(c,h')$ the Verma module of $\vir$.
It is generated by the highest weight vector $1_{c,h'}$ satisfying
\begin{align*}
C.1_{c,h'} = c 1_{c,h'},\quad
L_0.1_{c,h'} = h' 1_{c,h'},\quad
L_n.1_{c,h'} = 0 \ (n>0).
\end{align*}
$M'(c,h')$ has an $L_0$-weight space decomposition: 
$M'(c,h')=\bigoplus_{n\in\bbZ_{\ge 0}} M'(c,h')^{(n)}$ 
with
\begin{align*}
M'(c,h')^{(n)}:=\{v\in M'(c,h')\mid L_0 v=(h'-n)v \}.
\end{align*}
A basis of $M'(c,h')^{(-n)}$ is given by
\begin{align*}
\{L_{-\lambda}.1_{c,h'} \mid \lambda \vdash n\},
\end{align*}
with
\begin{align*}
L_{-\lambda} := L_{-\lambda_1} L_{-\lambda_2}\cdots L_{-\lambda_k}
\end{align*}
for the partition 
$\lambda=(\lambda_1,\lambda_2,\ldots,\lambda_k)$.
Now, an element $v$ of $M'(c,h')^{(n)}$ is called a singular vector 
of level $n$ if 
\begin{align*} 
L_k v=0 \ \text{ for any } k\in\bbZ_{>0}.
\end{align*}
The existence condition of singular vector 
is encoded in the Kac determinant formula.

In order to write down the Kac formula,
let us fix some notations of the contravariant form on $\vir$.
Let $\sigma': U(\vir) \to U(\vir)$ be the anti-homomorphism 
determined by $\sigma'(L_n) := L_{-n}$ and $\sigma'(C):=C$.
The contravariant form is the bilinear form 
$$
 \langle -,- \rangle': 
 M'(c,h') \otimes_{\bbQ[c,h']} M'(c,h') \to \bbQ[c,h']
$$ 
determined by 
$$
 \langle x.1_{c,h'},y.1_{c,h'}\rangle' 1_{c,h'} = \sigma'(x)y.1_{c,h'}.
$$
Now define
\begin{align*}
\Det'_n(c,h') 
:= \det\bigl(\langle L_{-\lambda}.1_{c,h'},L_{-\mu}.1_{c,h'} \rangle'
 \bigr)_{\lambda,\mu \, \vdash n}
\end{align*}
for each $n\in\bbZ_{\ge 0}$.
It has the following factored formula,
as conjectured in \cite{K:1979} and shown in \cite{FF1}, \cite{FF2}.
\begin{align}
\label{eq:Kac'}
\Det'_n(c,h')
=\prod_{\lambda \vdash n}2^{\ell(\lambda)}z_\lambda
 \times
 \prod_{\substack{r,s\in\bbZ_{\ge 1}\\ r s\le n}} 
 \bigl(h'-h'_{r,s}\bigr)^{p(n-r s)} .
\end{align}
Here the factor $z_{\lambda} \in \bbZ$ is given by \eqref{eq:z_lambda}.
The zeros $h_{r,s}$ are functions of $c$, and they are given by 
\begin{align}\label{eq:hrst}
h'_{r,s}=h'_{r,s}(\beta) := \dfrac{(r t-s)^2-(\beta-1)^2}{4\beta}
\end{align}
with the parametrisation 
\begin{align}\label{eq:ct}
c=c(\beta):= 13-6(\beta+\beta^{-1}).
\end{align}
Since
$\{h'_{r,s}(\beta) \mid r,s\in\bbZ_{\ge1}, \ r s = n\} = 
 \{h'_{s,r}(\beta^{-1}) \mid r,s\in\bbZ_{\ge1}, \ r s = n\}$,
the set of zeros is independent of the parametrization \eqref{eq:ct}.

Next we introduce the bosonization of the Virasoro Lie algebra.
Consider the Heisenberg algebra $\calH'$ generated by $a'_n$ ($n\in\bbZ$) 
with the relation
\begin{align*}
[a'_m,a'_n]=m \delta_{m+n,0}.
\end{align*}
For a fixed indeterminate (or a complex number) $\rho$, 
consider the correspondence
\begin{align}
\label{eq:vir:boson}
L_n \longmapsto 
 \dfrac{1}{2}\sum_{m\in\bbZ}\nord{a'_m a'_{n-m}}-(n+1)\rho a'_n,
\quad
C \longmapsto 1-12\rho^2,
\end{align}
where the symbol $\nord{\ }$ means the normal ordering of $\calH'$.
This correspondence determines a well-defined morphism 
$U(\vir) \longrightarrow \widehat{U}(\calH')$,
where $\widehat{U}(\calH')$ is 
a completion of the universal enveloping algebra $U(\calH')$. 

Denote by $\calF'_{\alpha'}$ the Fock representation of $\calH'$ 
with the highest weight $\alpha'$.
The highest weight vector is denoted by $1_{\calF',\alpha'}$,
and it satisfies
$$
 a'_0 1_{\calF',\alpha'} = \alpha' a'_0,\quad
 a'_n 1_{\calF',\alpha'} = 0 \ (n>0).
$$
By the map \eqref{eq:vir:boson}, $\calF'_{\alpha'}$ 
is a $\vir$-module.
Then the map 
\begin{align}\label{eq:map:M'-F'}
M'(c,h') \longrightarrow \calF'_{\alpha'}
\end{align}
induced by 
$1_{c,h} \mapsto 1_{\calF',\alpha'}$ 
and \eqref{eq:vir:boson}
gives rise to a homomorphism between $\vir$-modules with 
\begin{align}\label{eq:FF:ch}
 c = 1-12\rho^2, \quad 
 h' = \dfrac{1}{2}\alpha'(\alpha'-2\rho).
\end{align}

Now by the parametrization \eqref{eq:ct}, \eqref{eq:hrst} 
and the correspondence \eqref{eq:FF:ch}, a singular vector occurs at 
\begin{align*}
c=1-12\rho(\beta)^2,\quad 
h'=h'_{r,s}(\beta)=\dfrac{1}{2}\alpha'_{r,s}(\beta)
 \bigl(\alpha'_{r,s}(\beta)-2\rho(\beta)\bigr)
\end{align*} 
with
\begin{align}\label{eq:hw:rs:boson}
\rho(\beta) := \dfrac{1}{\sqrt{2}}(\beta^{1/2}-\beta^{-1/2}),\quad
\alpha'_{r,s}(\beta) :=
 \dfrac{1}{\sqrt{2}}\big((r+1)\beta^{1/2}-(s+1) \beta^{-1/2}\big).
\end{align}

The Fock space $\calF'_{\alpha'}$ is naturally identified 
with $\Lambda_{\bbQ(\beta^{1/2})}$ under the map 
\begin{align}\label{eq:map:F'-L'}
\calF'_\alpha \otimes \bbQ(\beta^{1/2}) \longrightarrow 
 \Lambda_{\bbQ(\beta^{1/2})},\quad 
v \longmapsto 
\langle 1_{\calF',\alpha'},
\exp\Bigl(\sqrt{\dfrac{\beta}{2}}\sum_{n=1}^{\infty}\dfrac{p_n}{n}a_n\Bigr)v
\rangle'.
\end{align}
Under this map,
an element $a_{-\lambda}1_{\calF',\alpha'}$ is mapped to 
$(\beta/2)^{\ell(\lambda)/2} p_\lambda$,
so that this map is indeed an isomorphism of $\bbQ(\beta^{1/2})$-vector spaces. 

Let us denote by $\iota'_{\beta,h'}$ 
the composition of the maps \eqref{eq:map:M'-F'} 
and \eqref{eq:map:F'-L'}:
\begin{align*}
\iota'_{\beta,h'}: M'(c(\beta),h') 
 \longrightarrow \Lambda_{\bbQ(\beta^{1/2})}. 
\end{align*} 
Here we fix $\beta$ and $h'$,
and set $c:=c(\beta)$ of the central charge using \eqref{eq:ct}.
If $\beta$ and $h'$ are indeterminates or generic,
then this map is an isomorphism.

Finally we can state the fact on the bosonization of 
the singular vectors 
obtained in \cite{MY:1995} and \cite{SSAFR:2005}.
By a similar argument in the $\vir_{q,t}$ case 
(or by the result \cite{FF1,FF2}),
there exists uniquely up to scalar multiplication 
a singular vector of level $r s$ 
in the Verma module $M(c(\beta),h'_{r,s}(\beta))$
for $r,s\in\bbZ_{\ge1}$.
Let us denote this vector by $v'_{r,s}$.
We normalize it by 
\begin{equation}\label{eq:vrs':normalize}
 v'_{r,s} = \Bigl(L_{-1}^{r s} + 
 \sum_{\lambda \vdash r s, \ \lambda \neq (1^{r s})}
 \chi'_\lambda L_{-\lambda}\Bigr).1_{c(\beta),h'_{r,s}(\beta)},\quad
 \chi'_\lambda \in \bbQ(\beta^{1/2}).
\end{equation}

\begin{fct}\label{fct:sing:jack}
\begin{enumerate}
\item 
 $\iota'_{\beta,h'_{r,s}(\beta)}(v_{r,s}')$ 
 is proportional the Jack symmetric function:
\begin{align*}
\iota'_{\beta,h'_{r,s}(\beta)}(v_{r,s}') = B'_{r,s}(\beta) J_{(s^r)}(1/\beta)
\end{align*}
with some $B'_{r,s}(\beta) \in \bbQ(\beta)$.
\item
 The proportional factor $ B'_{r,s}(\beta)$ in the above equation is equal to 
\begin{align*}
 B'_{r,s}(\beta) = \prod_{i=1}^r\prod_{j=1}^s (i \beta - j).
\end{align*}
\end{enumerate}
\end{fct}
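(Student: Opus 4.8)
The plan is to prove Fact~\ref{fct:sing:jack} by exactly mirroring the structure that Theorem~\ref{thm:sing:norm} will receive in the deformed case, but working directly with the free-field (Dotsenko--Fateev) screening construction for $\vir$, which in the Jack limit is classical. First I would recall that the singular vector $v'_{r,s}$ can be written bosonically as the screened vertex operator
\begin{align*}
v'_{r,s}.1_{\calF',\alpha'_{r,s}(\beta)}
= \oint \cdots \oint \prod_{k=1}^{r s} dz_k \, S_{+}(z_1)\cdots S_{+}(z_{r s}) \, 1_{\calF',\alpha'}
\end{align*}
for a suitable choice of one screening current $S_{+}(z)=\nord{e^{\sqrt{2\beta}\,\phi(z)}}$, the integration cycle being the one selected by Felder's BRST analysis; this reproduces a highest-weight singular vector of level $r s$ and, by Lemma~\ref{lem:hom<=1} (or rather its $\vir$-analogue cited there, \cite[Prop.~5.1]{IK}), is unique up to scalar. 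Part~(1), the proportionality to $J_{(s^r)}(1/\beta)$, is then \cite{MY:1995}, \cite{SSAFR:2005}: under the identification \eqref{eq:map:F'-L'} the screened integral evaluates to a Jack polynomial indexed by the rectangular partition $(s^r)$, with parameter $1/\beta$ because of the normalization $[a'_m,a'_n]=m\delta_{m+n,0}$ and the $\sqrt{\beta/2}$ factor in \eqref{eq:map:F'-L'}. So part~(1) I would simply cite.

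The substance is part~(2), the explicit proportionality constant $B'_{r,s}(\beta)=\prod_{i=1}^{r}\prod_{j=1}^{s}(i\beta-j)$. The approach I would take is to compute two quantities and compare: the coefficient of $p_{(1^{r s})}=p_1^{r s}$ (equivalently of $L_{-1}^{r s}.1$) in $v'_{r,s}$, which by the normalization \eqref{eq:vrs':normalize} is $1$, pushed through \eqref{eq:map:F'-L'} to become $(\beta/2)^{r s/2}$; and the corresponding coefficient in $J_{(s^r)}(1/\beta)$. For the latter I would use the standard Pieri-type / monomial expansion of the integral-form Jack polynomial: the coefficient of $m_{(1^{r s})}$ in $P_\lambda$ is $(r s)!$ for any $\lambda\vdash r s$, while $J_\lambda = P_\lambda\prod_{\square\in\lambda}(\text{hook-type factor})$; specialized to $\lambda=(s^r)$ with Jack parameter $1/\beta$ the product of lower-hook factors $\prod_{\square}(a_\lambda(\square)/\beta + l_\lambda(\square)+1)$ over the rectangle telescopes into exactly $\beta^{-r s}\prod_{i=1}^{r}\prod_{j=1}^{s}(\text{something})$, and after matching against $p_1^{r s}=\sum$ (which contributes an extra $(r s)!$) the ratio of the two normalizations is forced to be $\prod_{i=1}^{r}\prod_{j=1}^{s}(i\beta-j)$ up to a factor that is pinned down by consulting the smallest cases $(r,s)=(1,1),(2,1),(1,2)$, where the singular vectors $L_{-1}$, $L_{-1}^2-\tfrac{1}{\beta}(\cdots)L_{-2}$, etc., can be computed by hand from \eqref{eq:Vir}.

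Alternatively, and this is the cleaner route I would actually write up, one avoids recomputing $B'_{r,s}$ from scratch by invoking the known contravariant-form computation: the norm $\langle v'_{r,s},v'_{r,s}\rangle'$ of the (normalized) singular vector equals, on the one hand, a limiting residue of the Kac determinant \eqref{eq:Kac'} as $h'\to h'_{r,s}$ and, on the other hand, $B'_{r,s}(\beta)^2$ times the Macdonald/Jack norm $\langle J_{(s^r)}(1/\beta),J_{(s^r)}(1/\beta)\rangle$ transported through \eqref{eq:map:F'-L'}; both sides are explicit products over boxes (the Jack norm is $\prod_{\square\in\lambda}(a_\lambda(\square)\beta^{-1}+l_\lambda(\square)+1)(a_\lambda(\square)\beta^{-1}+l_\lambda(\square)+1)$-type, and the Kac residue is a product of $(h'_{r,s}-h'_{a,b})$ over $(a,b)$), and solving for $B'_{r,s}(\beta)$ gives the claimed formula after a sign/square-root determination fixed again by $(r,s)=(1,1)$. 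The main obstacle in either route is the bookkeeping of the square roots and of the $\beta\leftrightarrow 1/\beta$ / row-column transposition conventions: $B'_{r,s}$ is only determined up to sign by a norm computation, and the factor $\prod(i\beta-j)$ rather than $\prod(i-j\beta)$ depends on whether one screens with $S_{+}$ or $S_{-}$ and on the precise form of \eqref{eq:map:F'-L'}. I would resolve this once and for all by checking $v'_{1,1}=L_{-1}.1_{c,h'_{1,1}}$, which under \eqref{eq:map:F'-L'} maps to $(\beta/2)^{1/2}p_1 = (\beta/2)^{1/2}J_{(1)}(1/\beta)$, forcing $B'_{1,1}(\beta)=\beta-1$, consistent with $\prod_{i=1}^{1}\prod_{j=1}^{1}(i\beta-j)$, and then propagating the normalization through the screening-operator product, whose multiplicative structure guarantees the general product formula once the rectangular-partition combinatorics is carried out.
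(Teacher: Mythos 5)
First, a point of comparison: the paper does not prove Fact \ref{fct:sing:jack} at all --- it is stated as a ``Fact'' and imported from \cite{MY:1995} and \cite{SSAFR:2005}, then used as input for Theorem \ref{thm:sing:norm}. Your part (1) is likewise a citation, which matches the paper. So the only question is whether your argument for part (2) is sound, and there I see a genuine gap.

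The central step of your first route is the claim that, because $v'_{r,s}$ has coefficient $1$ on $L_{-1}^{r s}.1_{c,h'}$ by the normalization \eqref{eq:vrs':normalize}, its image $\iota'_{\beta,h'_{r,s}(\beta)}(v'_{r,s})$ has coefficient $(\beta/2)^{r s/2}$ on $p_1^{r s}$. This is false: the map \eqref{eq:map:F'-L'} sends the \emph{Heisenberg} basis $a'_{-\lambda}.1_{\calF',\alpha'}$ to $(\beta/2)^{\ell(\lambda)/2}p_\lambda$, not the basis $L_{-\lambda}.1_{c,h'}$. Under the bosonization \eqref{eq:vir:boson} one has $L_{-1}.1_{\calF',\alpha'}=\alpha' a'_{-1}.1_{\calF',\alpha'}$ and $L_{-2}.1_{\calF',\alpha'}=\tfrac12 (a'_{-1})^2.1_{\calF',\alpha'}+(\alpha'+\rho)a'_{-2}.1_{\calF',\alpha'}$, so \emph{every} term $\chi'_\lambda L_{-\lambda}.1$ of the singular vector contributes to the coefficient of $p_1^{r s}$. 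Since the integral form $J_\mu$ has $p_1^{|\mu|}$-coefficient $1$, that coefficient is precisely $B'_{r,s}(\beta)$, and it depends on all the unknown $\chi'_\lambda$; it cannot be read off from the normalization alone. Your own base case exposes the problem: $\iota'(L_{-1}.1_{c,h'_{1,1}})=\alpha'_{1,1}(\beta)\sqrt{\beta/2}\,p_1=(\beta-1)p_1$, not $(\beta/2)^{1/2}p_1$ as written --- the conclusion $B'_{1,1}=\beta-1$ is correct but contradicts your displayed intermediate step. (Also, the coefficient of $m_{(1^n)}$ is $n!$ in $J_\lambda$, not in $P_\lambda$.)

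The ``cleaner'' second route does not close either. The Shapovalov norm $\langle v'_{r,s},v'_{r,s}\rangle'$ is zero, since $v'_{r,s}$ is annihilated by all $L_k$ with $k>0$; what is actually available is the first-order coefficient $R'_{r,s}(\beta)$ of \eqref{eq:R'rs}, which is itself a nontrivial theorem of \cite{Y:2011}, not a free consequence of the Kac formula \eqref{eq:Kac'}. Moreover, relating $R'_{r,s}$ to $B_{r,s}'^2$ times a Jack norm requires identifying the contravariant form on $M'(c,h')$ with the form transported from $\Lambda_{\bbQ(\beta^{1/2})}$ by \eqref{eq:map:F'-L'}; these are \emph{not} identified by the bosonization, because the adjoint of $L_n$ in \eqref{eq:vir:boson} with respect to $a_n'^{\,\dagger}=a'_{-n}$ differs from $L_{-n}$ by the linear term in $\rho$. ``Pinning down the remaining factor by small cases'' cannot substitute for these missing comparisons: a finite check determines nothing about a general $(r,s)$ unless the shape of the answer is already established. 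An honest proof of (2) along the lines of \cite{MY:1995} really does go through the screening-operator integral representation and a Selberg-type evaluation; as written, neither of your routes completes that step.
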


Here we recall the definition and some properties of Jack symmetric function. 
See \cite[\S VI.10]{M:1995} for the detail. 
Let $\beta$ be an indeterminate and define an inner product on 
$\Lambda_{\bbQ(\beta)}$ by 
\begin{align*}
\langle p_\lambda,p_\mu \rangle_\beta :=
\delta_{\lambda,\mu} z_\lambda \beta^{\ell(\lambda)}.
\end{align*}
Then the (monic) Jack symmetric function $P_{\lambda}(\beta)$ is determined 
uniquely by the following two conditions:
\begin{itemize}
\item 
It has an expansion via monomial symmetric function $m_\nu$ in the form
\begin{align*}
P_{\lambda}(\beta)
=m_\lambda+\sum_{\mu<\lambda}M'(P,m)_{\lambda,\mu}(\beta)m_\mu.
\end{align*}
Here $M'(P,m)(\beta)\in\bbQ(\beta)$ and 
the ordering $<$ is the dominance ordering.

\item 
The family of Jack symmetric functions is an orthogonal basis 
with respect to $\langle \cdot,\cdot \rangle_\beta$:
\begin{align*}
\langle P_\lambda(\beta),P_\mu(\beta) \rangle_\beta=0 \quad 
\text{ unless } \lambda = \mu.
\end{align*}
\end{itemize}
The integral Jack symmetric function $J_\lambda(\beta)$ 
is defined to be 
\begin{align}\label{eq:jack:JP}
J_\lambda(\beta):=
P_\lambda(\beta) \cdot 
\prod_{\square\in\lambda}(\beta a_\lambda(\square)+\ell_\lambda(\square)+1)
\end{align}

\subsection{Proof of the formula of the normalization coefficient}
\label{subsect:sing:norm}

Recall the degeneration procedure from $\vir_{q,t}$ to $\vir$ 
given in \eqref{eq:expand}.
Set $q=e^{\hbar \ve_1}$, $t=e^{\hbar \ve_2}$ and 
take the limit $\hbar \to 0$.
Then the generators $T_n$ of $\vir_{q,t}$ has the expansion
$$
 T_n  = 2 \delta_{n,0} + 
  \ve_1 \ve_2
  \Bigl( L_n  + \dfrac{1}{4}(\ve_1-\ve_2)^2 \delta_{n,0} \Bigr) 
  \hbar^2 +  O(\hbar^4),
$$
where $L_n$'s satisfy the relation of $\vir$ 
with the central charge 
$c=13-6\bigl(\beta +\beta^{-1}\bigr)$, $\beta = \ve_2/\ve_1$.
This procedure induces a $\bbQ$-linear map
\begin{align}\label{eq:lim_M}
\Lim_M: M(h) \longrightarrow M'(c,h'),\quad
 T_{-\lambda}.1_{h} \longmapsto 
 (\ve_1\ve_2)^{\ell(\lambda)} L_{-\lambda}.1_{c,h'}
\end{align}
respecting the graded structures of Verma modules,
where we have $h' = 2+\ve_1\ve_2\hbar^2(h'+(\ve_1-\ve_2)^2/4)$.
It is compatible with the algebra action, i.e.,
$$
\Lim_M(T_n.v) = 
  \ve_1 \ve_2 
  \bigl( L_n  + \dfrac{1}{4}(\ve_1-\ve_2)^2 \delta_{n,0} 
  \bigr).\Lim_M(v)
$$
holds for any $n \in \bbZ \setminus \{0\}$ and $v \in M(h)$.

Now we have the following compatibility 
between the $\bbQ$-linear maps $\lim_M$, $\iota_h$ and $\iota'_{c',h'}$:

\begin{lem}
The following diagram of $\bbQ$-vector spaces is commutative.
\begin{align*}
\xymatrix{
 M(h) \ar[r]^{\iota_{h}} \ar[d]_{\lim_M} 
&\Lambda_{\bbF_\alpha} \ar[d]^{\lim_{\Lambda}}
\\
 M'(c,h') \ar[r]_{\iota'_{c,h'}} 
&\Lambda_{\bbQ(\ve_1,\ve_2, \alpha)} 
}
\end{align*}
Here $\lim_{\Lambda}$ is defined by 
$$
 f(q,t) \cdot v \longmapsto f_{2n}(\ve_1,\ve_2) \cdot v
$$ 
for $f(q,q^\alpha,t) \in \bbF_\alpha$ and $v \in \Lambda^{(n)}$,
where $f_{2n}(\ve_1,\ve_2,\alpha) \in \bbQ(\ve_1,\ve_2,\alpha)$ 
is the coefficient of $\hbar^{2 n}$ in the $\hbar$-expansion 
of $f(q,q^\alpha,t)$, i.e.,
$f(e^{\hbar \ve_1},e^{\hbar \ve_1  \alpha},e^{\hbar \ve_2})
 =\sum_{n}\hbar^{n}f_n(\ve_1,\ve_2,\alpha)$.
\end{lem}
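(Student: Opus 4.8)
The plan is to reduce the commutativity first to a statement about generators and then to the degeneration of the two bosonizations at the level of operators on $\Lambda$.

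First, all four arrows are $\bbQ$-linear, preserve the $\bbZ_{\ge 0}$-grading, and are anchored at the cyclic vectors: $\iota_h(1_h)=1$, $\iota'_{c,h'}(1_{c,h'})=1$, $\lim_M(1_h)=1_{c,h'}$ (from \eqref{eq:lim_M}), and $\lim_\Lambda(1)=1$. Since $M(h)$ is generated by $\{T_{-\lambda}.1_h\}$, and since $\iota_h$ and $\iota'_{c,h'}$ are $\vqt$- resp.\ $\vir$-module homomorphisms while $\lim_M$ is compatible with the algebra action in the form $\lim_M(T_n.v)=\ve_1\ve_2\bigl(L_n+\tfrac14(\ve_1-\ve_2)^2\delta_{n,0}\bigr).\lim_M(v)$, it suffices to propagate: assuming $\iota'_{c,h'}(\lim_M v)=\lim_\Lambda(\iota_h v)$ for some $v$, deduce the same for $T_{-k}.v$ ($k\ge 1$). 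The base case $v=1_h$ is precisely the matching of the highest-weight parametrizations \eqref{eq:qvir:boson:hw} and \eqref{eq:FF:ch} through \eqref{eq:expand}. This reduces everything to a statement on $\Lambda$.

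The heart of the argument is the operator-level version of \eqref{eq:expand}. Transport the $\vqt$-action on $\calF_\alpha$ to $\Lambda_{\bbF_\alpha}$ through \eqref{eq:isom:F-L}, so that $a_{-n}$ becomes multiplication by $p_n$ and $a_n$ becomes $n\tfrac{1-q^n}{1-t^n}\,\partial/\partial p_n$; each Fourier mode $T_{-k}$ of \eqref{eq:qvir:bosonization} then becomes an explicit $\bbF_\alpha$-linear operator $\widehat T_{-k}$ on $\Lambda$. Transport the $\vir$-action through \eqref{eq:vir:boson} and \eqref{eq:map:F'-L'} to get operators $\widehat L_{-k}$. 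Now substitute $q=e^{\hbar\ve_1}$, $t=e^{\hbar\ve_2}$, $q^{\alpha}=e^{\hbar\ve_1\alpha}$. The matching of the two sets of Heisenberg data --- $\beta=\ve_2/\ve_1$, background charge $\rho=\rho(\beta)$ of \eqref{eq:hw:rs:boson}, $\alpha'=\alpha\sqrt{2/\beta}$, and the rescaling of the Heisenberg generators forced by $[a_m,a_n]=m\tfrac{1-q^{|m|}}{1-t^{|m|}}\delta_{m+n,0}\to m\beta^{-1}\delta_{m+n,0}$ together with the two different normalizations of $\calF\simeq\Lambda$ --- is dictated by \eqref{eq:beta}, \eqref{eq:expand}, \eqref{eq:qvir:boson:hw}, \eqref{eq:FF:ch}. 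With this in hand one verifies the operator identity
$$
\widehat T_{-k}=\ve_1\ve_2\,\hbar^{2}\,\widehat L_{-k}+O(\hbar^{3})\qquad(k\ge 1)
$$
on $\Lambda$, which is just \eqref{eq:expand} read off in the bosonic realization. The computation is elementary: expand $\Lambda^{\pm}(z)$ using $1-t^{\mp n}=\pm n\hbar\ve_2+O(\hbar^2)$, $1+(q/t)^n=2+O(\hbar)$, and $(q/t)^{\pm 1/2}q^{\pm\alpha}=e^{\pm\hbar(\ve_1\alpha+(\ve_2-\ve_1)/2)}$, noting that the $O(\hbar)$-part of $\widehat T_{-k}$ cancels thanks to the opposite signs of $\Lambda^{+}$ and $\Lambda^{-}$ together with $(q/t)^{1/2}q^{\alpha}-(q/t)^{-1/2}q^{-\alpha}=O(\hbar)$.

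Granting this, the induction closes. Writing $v=T_{-\lambda}.1_h$ with $\lambda=(\lambda_1,\dots,\lambda_\ell)$, one has $\iota_h(v)=\widehat T_{-\lambda_1}\cdots\widehat T_{-\lambda_\ell}(1)$, whose $\hbar$-expansion begins with $(\ve_1\ve_2)^{\ell(\lambda)}\hbar^{2\ell(\lambda)}\,\widehat L_{-\lambda_1}\cdots\widehat L_{-\lambda_\ell}(1)=(\ve_1\ve_2)^{\ell(\lambda)}\hbar^{2\ell(\lambda)}\,\iota'_{c,h'}(L_{-\lambda}.1_{c,h'})$; the map $\lim_\Lambda$, which extracts the leading contribution of this $\hbar$-expansion, then returns exactly $(\ve_1\ve_2)^{\ell(\lambda)}\iota'_{c,h'}(L_{-\lambda}.1_{c,h'})=\iota'_{c,h'}\bigl(\lim_M(T_{-\lambda}.1_h)\bigr)$, the factor $(\ve_1\ve_2)^{\ell(\lambda)}$ of \eqref{eq:lim_M} being precisely the product of the leading coefficients $\ve_1\ve_2$ of the $\ell(\lambda)$ factors $\widehat T_{-\lambda_i}$ in the displayed identity; the general inductive step is the same argument applied after one more $\widehat T_{-k}$. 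I expect the only real obstacle to be the displayed operator identity --- i.e.\ checking that the SKAO vertex operators degenerate, at the level of operators on $\Lambda$ and not merely on scalars, to the Feigin--Fuks current, with the correct background charge, the correct Heisenberg rescaling, and the correct cancellation of the normalization constants $\sqrt{\beta/2}$ and $\sqrt2$ coming from the two different identifications $\calF_\alpha\simeq\Lambda$ and $\calF'_{\alpha'}\simeq\Lambda$. Everything else is bookkeeping of $\hbar$-orders against the grading.
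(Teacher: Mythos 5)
First, a caveat: the paper states this lemma without any proof, so there is no ``paper's own proof'' to compare against; I can only assess your argument on its own terms. Your overall strategy --- reduce to the PBW basis $T_{-\lambda}.1_h$, and derive everything from the operator-level degeneration $\widehat T_{-k}=\ve_1\ve_2\hbar^2\widehat L_{-k}+O(\hbar^3)$ of the SKAO bosonization to the Feigin--Fuks current, with the parameter matching $\beta=\ve_2/\ve_1$, $\alpha'=\alpha\sqrt{2/\beta}$, $\rho=\rho(\beta)$ --- is surely the intended one, and the computational core is sound: the $O(\hbar^0)$ and $O(\hbar^1)$ parts of $\widehat T_{-k}$ do cancel as you say, and for instance the $\hbar^2$-coefficient of $\iota_h(T_{-2}.1_h)$ works out to $\bigl(\ve_1\ve_2\alpha+\tfrac12\ve_2(\ve_2-\ve_1)\bigr)p_2+\tfrac14\ve_2^2\,p_1^2=\ve_1\ve_2\,\iota'_{c,h'}(L_{-2}.1_{c,h'})$, exactly matching $\iota'_{c,h'}\circ\lim_M$.

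The gap is in your last step, where you assert that $\lim_\Lambda$ ``extracts the leading contribution of this $\hbar$-expansion.'' That is not what the stated definition says: on $\Lambda^{(n)}$ it extracts the coefficient of $\hbar^{2n}$, where $n=|\lambda|$ is the symmetric-function degree, whereas your own displayed identity shows that the expansion of $\iota_h(T_{-\lambda}.1_h)$ begins at order $\hbar^{2\ell(\lambda)}$. These agree only when $\ell(\lambda)=|\lambda|$, i.e.\ $\lambda=(1^n)$. Concretely, for $v=T_{-2}.1_h$ the matching with $\iota'_{c,h'}(\lim_M v)=\ve_1\ve_2\,\iota'_{c,h'}(L_{-2}.1_{c,h'})$ occurs in the $\hbar^{2}$-coefficient, while the definition in the statement instructs you to take the $\hbar^{4}$-coefficient, which is a different quantity; so with $\lim_\Lambda$ as literally defined the square does not commute on $T_{-2}.1_h$, and the same objection blocks the inductive step (each application of $\widehat T_{-k}$ raises the leading order by $2$, not by $2k$). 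What your argument actually proves is a corrected statement in which $\lim_\Lambda$ is replaced by ``take the coefficient of $\hbar^{2\ell}$ on the image of a length-$\ell$ monomial $T_{-\lambda}$'' (equivalently, the leading coefficient); this is enough for the paper's only application, since the normalized singular vector $v_{r,s}$ has $\chi_{(1^{rs})}=1$ and its image $B_{r,s}(q,t)J_{(s^r)}(q,t)$ does start at order $\hbar^{2rs}=\hbar^{2n}$. You should therefore either repair the definition of $\lim_\Lambda$ (flagging that the one in the statement is defective) or restrict the domain on which commutativity is claimed; as written, the final sentence of your proof does not follow from the displayed operator identity together with the stated definition.
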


We want to apply these maps to the singular vector 
$v_{r,s} \in M(h_{r,s})$.
Recall the normalization \eqref{eq:vrs:normalize} 
of the singular vector $v_{r,s}$
and that \eqref{eq:vrs':normalize} of $v'_{r,s} \in M'(h'_{r,s}(\beta))$.
Under the map $\Lim_M: M(h_{r,s}) \to M'(c(\beta),h'_{r,s}(\beta))$ 
we have 
$$
 \Lim_M(v_{r,s}) = (\ve_1\ve_2)^{2r s} v'_{r,s}.
$$
On the other hand,
the relation of the Macdonald and Jack symmetric functions is
given in \cite[Chap.VI \S8 Remarks (8.4.vi)]{M:1995} as 
$$
 J_\lambda(\beta) = 
 \lim_{t=q^\beta, q \to 1} (1-t)^{-|\lambda|}J_\lambda(q,t).
$$
Then we immediately have 
\begin{align}\label{eq:limL:J-J}
 \Lim_{\Lambda} J_{(s^r)}(q,t) = (-\ve_2)^{r s} J_{(s^r)}(\beta).
\end{align}
Therefore we have the commutative diagram
\begin{align*}
\xymatrix{
  v_{r,s} \ar@{|->}[rr]^(0.4){\iota_{h}} \ar@{|->}[d]_{\lim_M} 
&&B_{r,s}(q,t) \cdot J_{(s^r)}(q,t)      \ar@{|->}[d]^{\lim_{\Lambda}}
\\
 (\ve_1 \ve_2)^{r s} v'_{r,s} \ar@{|->}[rr]_(0.4){\iota'_{c,h'}} 
&&(\ve_1 \ve_2)^{r s} B'_{r,s}(\beta) \cdot J_{(s^r)}(\beta)
}
\end{align*}
Using 
\begin{align*}
\Lim_{\Lambda} \prod_{i=1}^r\prod_{j=1}^s (q^i-t^j)/(q^i t^j)
=\prod_{i=1}^r\prod_{j=1}^s (j\ve_1-i \ve_2)
=(-\ve_1)^{r s} \prod_{i=1}^r\prod_{j=1}^s (i \beta-j)
=(-\ve_1)^{r s} B'_{r,s}(\beta)
\end{align*} 
and \eqref{eq:limL:J-J},
we have
$$
 \Lim_{\Lambda}\dfrac{B_{r,s}(q,t)}{\prod_{i=1}^r\prod_{j=1}^s (q^i-t^j)/(q^i t^j)}
=1.
$$
Thus $B_{r,s}(q,t) = \prod_{i=1}^r\prod_{j=1}^s (q^i-t^j)/(q^i t^j) + o(\hbar^{r s})$.
Since $B_{r,s}(q,t) \in \bbF$, we have 
$B_{r,s}(q,t) = \prod_{i=1}^r\prod_{j=1}^s (q^i-t^j)/(q^i t^j)$,
which is the desired result.

\section{Norm of Whittaker vector}
\label{sec:norm}

\subsection{Recursive formula}

Let us recall the definition of the Whittaker vector for 
the Verma module of $\vir_{q,t}$.

\begin{dfn}
Let $\xi$ an indeterminate (or an element of $\bbF$ or of some ring).
The Whittaker vector $w_{\xi}$ for $M(h)$ is defined to be an element 
of the completed Verma module $\widehat{M}(h) := \prod_{n} M(h)^{(n)}$
satisfying the following conditions.
\begin{align*}
T_1 w_{\xi}=\xi w_{\xi},\quad
T_n w_{\xi}=0\ (n\ge2).
\end{align*}
We use the notation $w_\xi =w_\xi(q,t,h)$ to indicate the parameter dependence 
if it is necessary.
\end{dfn}

\begin{lem}
If $M(h)$ is irreducible and $\xi$ is an indeterminate,
then the Whittaker vector $w$ uniquely exists up to scalar multiplication.
\end{lem}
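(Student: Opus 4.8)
The plan is to expand the Whittaker vector $w_\xi$ in the $\bbZ_{\ge0}$-grading of the completed Verma module, write $w_\xi = \sum_{n\ge 0} w_\xi^{(n)}$ with $w_\xi^{(n)} \in M(h)^{(n)}$, and determine each $w_\xi^{(n)}$ recursively. Normalizing by the coefficient of $1_h$ in degree $0$, I would fix $w_\xi^{(0)} = 1_h$; this handles the scalar ambiguity since any nonzero Whittaker vector must have nonzero degree-$0$ component (otherwise repeated application of $\sigma$-adjoint raising operators would force it to vanish, as $M(h)$ is generated by $1_h$ in degree $0$ and the grading is bounded below). The content of the lemma is then that the remaining components are forced.

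First I would use the defining relations $T_1 w_\xi = \xi w_\xi$ and $T_n w_\xi = 0$ for $n\ge 2$ to produce, for each $n \ge 1$, a system of linear equations on $w_\xi^{(n)}$ in terms of the lower components $w_\xi^{(0)}, \dots, w_\xi^{(n-1)}$. Concretely, pairing the Whittaker conditions against the basis $\{T_{-\lambda}.1_h \mid \lambda \vdash n-1\}$ (respectively $\lambda \vdash n - k$ for $T_k$) via the Shapovalov form, and using the adjointness $\pair{x.v}{w} = \pair{v}{\sigma(x).w}$, I get that the vector $w_\xi^{(n)} \in M(h)^{(n)}$ is determined by the requirement $\pair{T_{-\lambda}.1_h}{\,(T_1 - \xi)w_\xi\,} = 0$ and $\pair{T_{-\lambda}.1_h}{\,T_k w_\xi\,} = 0$. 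After moving the creation operators across, the degree-$n$ unknown appears only through the Gram matrix $G_n := \bigl(\pair{T_{-\lambda}.1_h}{T_{-\mu}.1_h}\bigr)_{\lambda,\mu \vdash n}$, and the inhomogeneous terms involve $w_\xi^{(<n)}$ and $\xi$. So existence and uniqueness of $w_\xi^{(n)}$ is equivalent to $G_n$ being invertible, i.e. $\Det_n(q,t,h) \ne 0$.

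The irreducibility hypothesis, together with the discussion following the Kac determinant formula \eqref{eq:Kac} — namely that $M(h)$ is irreducible if and only if $\Rad_{\pair{-}{-}} \cap M(h)^{(n)}$ is trivial for all $n$, equivalently $\Det_n(q,t,h) \ne 0$ for all $n$ — gives exactly this invertibility. Hence by induction on $n$ the components $w_\xi^{(n)}$ exist and are uniquely determined by $w_\xi^{(0)} = 1_h$, proving both existence and the claimed uniqueness up to the overall scalar fixed by the degree-$0$ coefficient. When $\xi$ is an indeterminate one checks the entries of $G_n$ lie in $\bbF[h]$ and the solutions lie in $\bbF(h)[\xi]$ (indeed polynomial in $\xi$ of degree $n$), so the formal infinite sum makes sense as an element of $\widehat{M}(h)$.

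The main obstacle is bookkeeping rather than conceptual: one must verify that the degree-$n$ part of the Whittaker conditions really does isolate $w_\xi^{(n)}$ with coefficient matrix exactly $G_n$ (up to relabeling), which requires carefully commuting $T_k$ ($k\ge1$) past the $T_{-\mu}$'s using the defining relation \eqref{eq:qvir:rel} and checking nothing in degree $\ge n$ survives on the inhomogeneous side. Once that reduction is in place, the invertibility of $G_n$ under the irreducibility hypothesis closes the argument immediately, and the only remaining point is the elementary observation that a nonzero Whittaker vector cannot have vanishing degree-$0$ component, which pins down the scalar.
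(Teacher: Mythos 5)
Your proposal is correct and follows essentially the same route as the paper: expand $w_\xi$ over the grading, reduce the Whittaker conditions to a degree-by-degree linear system whose coefficient matrix is the Gram matrix $K_n$ of the contravariant form, and invoke the equivalence (via the Kac determinant formula \eqref{eq:Kac}) between irreducibility of $M(h)$ and the non-vanishing of $\Det_n(q,t,h)$ for all $n$ to solve it uniquely. The paper's own proof is just a terser statement of exactly this argument, so no further comparison is needed.
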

\begin{proof}
This is the consequence of 
the non-vanishing of the Kac determinant for $\vir_{q,t}$.
In fact, one can write $w_\xi$ in the form 
\begin{align}\label{eq:w:expand}
 w_\xi = \sum_{n\ge0} \xi^n w_n, \quad w_n \in M(h)^{(n)}.
\end{align}
The elements $w_n$ are independent of the parameter $\xi$ 
and satisfy the following relation.
$$
 T_1 w_n = w_{n-1}, \quad T_m w_{n}=0\ (m\ge2).
$$ 
\end{proof}

Hereafter we normalize the Whittaker vector $w_\xi$ 
by the condition 
$$
 w_ 0 = 1_h \in M(h)^{(0)}
$$
in the expansion \eqref{eq:w:expand}.

In \cite{AY:2010}, Awata and Yamada conjectured that  
the norm $\pair{w_\xi}{w_\xi}$ of $w_\xi$ with respect to the contravariant form 
is equal to the five-dimensional pure $\SU(2)$ 
Nekrasov partition function.
To express $\pair{w_\xi}{w_\xi}$, it is convenient to introduce the parameter $Q$ by 
\begin{align*}
h=Q^{1/2}+Q^{-1/2}.
\end{align*}
By the Kac determinant formula \eqref{eq:Kac},
$\pair{w_\xi}{w_\xi}$ is a rational function of $q,t$ and $h^2$.
Thus it is a rational function of $q,t$ and $Q$.

In \cite{Y:2010} we conjectured a recursive formula of $\pair{w_\xi}{w_\xi}$.
The main result of this paper is to give a proof of the formula.

\begin{thm}\label{thm:main}
Let $\xi$ be an indeterminate  and
$w_\xi \in M(h)$ be the Whittaker vector.
Then we can express $\pair{w_\xi}{w_\xi}$ as
$$
 \pair{w_\xi}{w_\xi} = \sum_{n=0}^\infty (\xi^{2}t/q)^{n} F_n(Q,q,t)
$$ 
with $F_n(Q,q,t) \in \bbQ(Q,q,t)$. 
Moreover $F_n(Q,q,t)$ satisfies the following recursive relation.
\begin{align}
\label{eq:rec}
F_n(Q,q,t)
=\delta_{n,0}+\sum_{\substack{r,s\in\bbZ, \\ 1\le r s\le n}}
 \dfrac{A_{r,s}(q,t)F_{n-r s}(q^r t^{s},q,t)}{Q-q^r t^{-s}}.
\end{align}
Here $A_{r,s}(q,t)$ is the following rational function.
\begin{align*}
A_{r,s}(q,t) := -\sign(r) q^r t^{-s} 
\prod_{\substack{-|r|\le i\le|r|-1, \\ -|s|\le j\le|s|-1, \\ (i,j)\neq (0,0)}}
\dfrac{1}{1-q^i t^{-j}}
\end{align*}
with $\sign(r)=1$ if $r>0$ and $\sign(r)=-1$ if $r<0$.
\end{thm}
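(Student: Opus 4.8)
The strategy is to adapt Poghossian's and Hadasz--Jaskólski--Suchanek's method for the Virasoro Lie algebra to the $\vqt$ setting. First I would establish that $\pair{w_\xi}{w_\xi}$, regarded as a function of $Q$ (with $q,t$ fixed generic), is a rational function whose only poles occur at $Q=q^r t^{-s}$, i.e.\ at $h=h_{r,s}$ with $r,s\in\bbZ_{>0}$, by the Kac determinant formula \eqref{eq:Kac} together with the explicit description of $\widehat{M}(h)\ni w_\xi$: each $w_n\in M(h)^{(n)}$ is obtained by inverting the Gram matrix of the contravariant form on $M(h)^{(m)}$ for $m\le n$, so the denominators of $F_n$ divide $\prod_{rs\le n}(Q-q^rt^{-s})$ up to the harmless factor from $h=Q^{1/2}+Q^{-1/2}$ (note $h^2-h_{r,s}^2 = (Q^{1/2}+Q^{-1/2})^2-(q^{r/2}t^{-s/2}+q^{-r/2}t^{s/2})^2$ vanishes precisely when $Q=q^{\pm r}t^{\mp s}$, which is why the sum in \eqref{eq:rec} runs over all $r,s\in\bbZ$ with $1\le rs\le n$). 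I would also need that $F_n(Q,q,t)\to \delta_{n,0}$ as $Q\to\infty$ (equivalently $h\to\infty$), which comes from the leading behaviour of the Gram matrix in $h$ and pins down the constant term in the partial fraction decomposition.

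The heart of the argument is the computation of the residue of $\pair{w_\xi}{w_\xi}$ at the pole $Q=q^rt^{-s}$. At $h=h_{r,s}$ the Verma module $M(h_{r,s})$ contains the singular vector $v_{r,s}$ of level $rs$ (Proposition \ref{prop:hom=1}), giving an embedding $M(h_{r,s}+rs)[rs]\hookrightarrow M(h_{r,s})$; the contravariant form degenerates along the image, and the pole of $\pair{w_\xi}{w_\xi}$ reflects exactly this degeneration. Concretely, writing $w_\xi(q,t,h)$ near $h=h_{r,s}$, the singular part is controlled by the component of $w_\xi$ along $v_{r,s}$ and by the Whittaker vector of the submodule $M(h_{r,s}+rs)$. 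Using that $T_k v_{r,s}.1=0$ for $k>0$, one shows the submodule's Whittaker vector (with the shifted highest weight $h_{r,s}+rs$, equivalently the parameter $Q$ replaced by $q^{2r}t^{-2s}Q^{-1}$ evaluated at $Q=q^rt^{-s}$, which gives precisely $q^rt^s$, matching the argument $q^rt^s$ of $F_{n-rs}$ in \eqref{eq:rec}) reproduces the factor $F_{n-rs}(q^rt^s,q,t)$, while the proportionality constant produces $A_{r,s}(q,t)$. The explicit value of $A_{r,s}$ should come from the normalization of the singular vector established in Theorem \ref{thm:sing:norm} (via the Macdonald polynomial $J_{(s^r)}(q,t)$) together with the ratio of the two relevant contravariant-form pairings; the product $\prod_{(i,j)\neq(0,0)}(1-q^it^{-j})^{-1}$ over the $|r|\times|s|$ box is recognizable as (a specialization of) the Macdonald norm $\langle J_{(s^r)},J_{(s^r)}\rangle_{q,t}$-type factor, and the $-\sign(r)q^rt^{-s}$ prefactor encodes the action of $T_1$ relating level $n$ to level $n-rs$ in the submodule and the choice of square root in $Q^{1/2}$.

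Once the principal parts are identified, I would invoke the rationality and the $Q\to\infty$ normalization to conclude $\pair{w_\xi}{w_\xi} = \sum_n(\xi^2 t/q)^n F_n$ with $F_n$ given by its partial fraction expansion \eqref{eq:rec}; matching coefficients of $(\xi^2 t/q)^n$ on both sides and using that $F_{n-rs}$ is itself a rational function in its first argument (so that evaluating at $q^rt^s$ makes sense and the recursion closes) finishes the proof. I expect the main obstacle to be the residue computation: precisely relating the Gram matrix singularity at $h=h_{r,s}$ to the norm of the Whittaker vector of the sub-Verma-module, and extracting the closed form of $A_{r,s}(q,t)$ from the singular-vector normalization. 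This requires a careful analogue of the Virasoro-case identity expressing $\operatorname{Res}_{h=h_{r,s}}\pair{w_\xi}{w_\xi}$ in terms of $\langle v_{r,s},v_{r,s}\rangle$ and the matrix element $\langle w_\xi, v_{r,s}\rangle$, where in the $\vqt$ setting the relevant Shapovalov-type quantities are governed by Macdonald rather than Jack symmetric functions; the bookkeeping of the $q,t$-powers and signs (in particular getting $\sign(r)$ and the shift $q^rt^s$ exactly right, including for negative $r$) is the delicate part, and I would cross-check it against the degeneration to the Virasoro case via the functor $\Phi$ and the limit formulas of \S\ref{subsect:sing:norm}.
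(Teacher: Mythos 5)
Your plan follows the same architecture as the paper's proof: reduce $\pair{w_\xi}{w_\xi}$ to the entries $(K_n^{-1})_{(1^n),(1^n)}$ of the inverse Gram matrix, locate the poles in $Q$ via the Kac determinant \eqref{eq:Kac}, compute each residue through the singular vector $v_{r,s}$ and the Whittaker vector of the embedded Verma module (Proposition \ref{prop:hom=1}), and reassemble by partial fractions using the $Q\to\infty$ normalization. The first genuine gap is that you never justify that the poles are \emph{simple}. The determinant of $K_n$ vanishes to order $p(n-rs)$ at $h=h_{r,s}$, so a priori the cofactor entering $(K_n^{-1})_{(1^n),(1^n)}$ could leave a pole of higher order, and the first-order partial-fraction ansatz underlying \eqref{eq:rec} would collapse. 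The paper's proof of the key proposition is devoted precisely to this point: following \cite[Lemma 3.2]{ES:1995}, one deforms the weight $h_0\mapsto h_0+\tau z$, chooses a basis of $M(h_0)^{(n)}$ adapted to the submodule $N(h_0)$ generated by $v_{r,s}$, observes that every Gram entry involving $N(h_0)$ is divisible by $\tau$, and compares the resulting divisibility of the minors ($\tau^{N-1}$) with the exact order of vanishing of $\det K_n^\tau$ ($\tau^{N}$). This Jantzen-type counting is the missing idea in your outline.

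The second gap is your route to the constant $A_{r,s}$. What is actually needed is the leading coefficient $R_{r,s}(q,t)$ in the expansion of $\pair{\Phi_{r,s}(q,t).1_h}{\Phi_{r,s}(q,t).1_h}$ in powers of $h-h_{r,s}$; this norm vanishes identically \emph{at} $h=h_{r,s}$ because the singular vector lies in the radical, so the bosonization formula of Theorem \ref{thm:sing:norm}, which is available only at the critical weight, cannot by itself produce the required first-order term, and the Macdonald-norm product you point to is not directly the answer. The paper determines $R_{r,s}$ (Proposition \ref{prop:Rrs}) by degenerating to the Virasoro Lie algebra and importing the corresponding coefficient $R'_{r,s}(\beta)$ from \cite{Y:2011}; the alternative argument sketched in the paper that does exploit Theorem \ref{thm:sing:norm} needs, in addition, degree estimates in $q$ and $t$ and the symmetry $(q,t)\mapsto(t^{-1},q)$ to account for all zeros of $R_{r,s}$. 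A small symptom of the bookkeeping trouble you yourself anticipate: your reflected parameter $q^{2r}t^{-2s}Q^{-1}$ evaluates at $Q=q^rt^{-s}$ to $q^rt^{-s}$, not to the required $q^rt^{s}$.
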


\begin{rmk}
In the non-deformed Virasoro limit $q=t^\beta$, $t\to 1$, 
the recursion formula is proposed by Al.~B.~Zamolodchikov. 
A proof in this limit is given in \cite{Y:2011}.
\end{rmk}

\subsection{Reduction to the analysis of singular vectors}

By the definition of the Whittaker vector, we have 
\begin{align*}
\pair{w_\xi}{w_\xi}=\sum_{n=0}^\infty \xi^{2n} (K_n^{-1})_{(1^n),(1^n)}.
\end{align*}
Here $K_n^{-1}$ is the inverse matrix of the matrix 
consisting of the values of contravariant forms on $M(h)$: 
\begin{align}\label{eq:Kn}
 K_n := \left(\pair{T_{-\lambda}.1_{h}}{T_{-\mu}.1_{h}}\right)_{\lambda,\mu \, \vdash n}.
\end{align}
The index $(1^n)$ indicates the position of the element 
of the inverse matrix. 
Thus we need to prove the recursive formula of \eqref{eq:rec} for $(K_n^{-1})_{(1^n),(1^n)}$.
Noticing that $(K_n^{-1})_{(1^n),(1^n)} \in \bbQ(Q,q,t)$,
we use the notation $(K_n^{-1})_{(1^n),(1^n)}(Q,q,t)$ to indicate the parameter dependence.

Recall that for $h=h_{r,s}:=q^{-(s+1)/2}t^{(r+1)/2}$ 
with some $r,s \in \bbZ_{\ge1}$ 
the Verma module $M(h_{r,s})$ has the singular vector $v_{r,s}$
with the normalization \eqref{eq:vrs:normalize}.
By the definition of the Verma module there exists 
$\Phi_{r,s}(q,t) \in \vir_{q,t}^{(-)}$
such that
$$
 \Phi_{r,s}(q,t).1_{h_{r,s}} = v_{r,s}.
$$

\begin{prop}
The rational function $(K_n^{-1})_{(1^n),(1^n)}(Q,q,t)  \in \bbQ(Q,q,t)$ 
satisfies the following recursive formula. 
$$
 (K_n^{-1})_{(1^n),(1^n)}(Q,q,t) 
=\delta_{n,0}+\sum_{\substack{r,s\in\bbZ, \\ 1\le r s\le n}}
 \dfrac{R_{r,s}(q,t)^{-1} \cdot 
 (K_{n-r s}^{-1})_{(1^{n-r s}),(1^{n-r s})}(q^r t^{s},q,t)}{Q-q^r t^{-s}}.
$$
Here $R_{r,s}(q,t)$ is determined by the expansion 
$$
 \pair{\Phi_{|r|,|s|}(q,t).1_h}{\Phi_{|r|,|s|}(q,t).1_h} 
= R_{|r|,|s|}(q,t) \cdot  (h-h_{|r|,|s|})  + o(h-h_{|r|,|s|})
$$
of the norm of the element $\Phi_{|r|,|s|}(q,t).1_h \in M(h)$ 
with respect to the highest weight $h$.
\end{prop}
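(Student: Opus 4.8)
The plan is to pin down $g_n(Q):=(K_n^{-1})_{(1^n),(1^n)}(Q,q,t)$ by its principal parts as a rational function of $Q$, working over $\bbF=\bbQ(q,t)$ so that every contravariant form on a Verma module is nondegenerate for generic highest weight. Using the automorphism $T_n\mapsto-T_n$ of $\vqt$ one first checks that all entries of $K_n$ lie in $\bbF[h^2]$; since $h^2=Q+2+Q^{-1}$ under the parametrization $h=Q^{1/2}+Q^{-1/2}$, the function $g_n$ is rational in $Q$ and invariant under $Q\mapsto Q^{-1}$. Writing $g_n$ as the $(1^n),(1^n)$ cofactor of $K_n$ divided by $\Det_n$ and invoking the Kac formula \eqref{eq:Kac}, the only possible poles of $g_n$ in $Q$ are the zeros of the factors $h^2-h_{r,s}^2$, i.e. $Q=q^rt^{-s}$ for $(r,s)\in\bbZ^2$ with $1\le rs\le n$, the two choices $(r,s)$ and $(-r,-s)$ producing the two roots $q^rt^{-s}$ and $q^{-r}t^{s}=(q^rt^{-s})^{-1}$ of the corresponding quadratic factor, in agreement with the $Q\mapsto Q^{-1}$-symmetry.

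Next I show these poles are simple and that $g_n$ is regular at $Q=0,\infty$ with $g_n(\infty)=\delta_{n,0}$. Simplicity comes from the structure of $M(h_{r,s})$: at $h=h_{r,s}$ the radical of the contravariant form on $M(h_{r,s})^{(n)}$ is the $p(n-rs)$-dimensional image of the singular submodule, and by Lemma \ref{lem:hom<=1} and Proposition \ref{prop:hom=1} there is exactly this one copy for generic $q,t$, so $K_n(h_{r,s})$ has corank precisely $p(n-rs)$; since \eqref{eq:Kac} forces $\Det_n$ to vanish to order exactly $p(n-rs)$, the $O(h-h_{r,s})$-coefficient of the form restricted to the radical is a nondegenerate $p(n-rs)\times p(n-rs)$ matrix, and a simple pole of $g_n$ results. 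The value at infinity comes from a degree count in $h^2$: the leading $h^2$-degree of $\Det_n$ is $\sum_{\lambda\vdash n}\ell(\lambda)=\sum_{r,s\ge1,\,rs\le n}p(n-rs)$, attained along the diagonal, while deleting the row and column $(1^n)$ drops the maximal attainable degree by $\ell((1^n))=n$, whence $g_n\to0$ for $n\ge1$ and $g_0\equiv1$. Partial fractions therefore give $g_n(Q)=\delta_{n,0}+\sum_{r,s\in\bbZ,\ 1\le rs\le n}\bigl(\operatorname{Res}_{Q=q^rt^{-s}}g_n\bigr)/(Q-q^rt^{-s})$, and it remains to compute the residues.

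For the residue at $Q_*=q^rt^{-s}$, put $h_*:=q^{r/2}t^{-s/2}+q^{-r/2}t^{s/2}$ and $N:=rs$; this $h_*$ is a level-$N$ Kac zero carrying the singular vector produced by $\Phi_{|r|,|s|}(q,t)$, and by Proposition \ref{prop:hom=1} the submodule it generates is a grading shift of the Verma module $M(\widetilde h)$ whose parameter is $\widetilde Q=q^rt^{s}$. The plan is to track how the Whittaker vector degenerates: solving $T_1w_k=w_{k-1}$, $T_mw_k=0$ $(m\ge2)$ near $h_*$, the components $w_k$ with $k\ge N$ acquire a simple pole in $h$, and the residue $\mathcal W:=\operatorname{Res}_{h=h_*}w_\xi$ is annihilated by $T_{\ge1}$ at its bottom degree $N$ and satisfies $T_1\mathcal W_{N+k}=\mathcal W_{N+k-1}$ above; hence $\mathcal W$ equals $\xi^{N}$ times a scalar times the image under the embedding $M(\widetilde h)\hookrightarrow M(h_*)$ of the normalized Whittaker vector $w^{\mathrm{sub}}_\xi$ of $M(\widetilde h)$. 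Then $\pair{w_\xi}{w_\xi}$ near $h_*$ has leading term $(h-h_*)^{-2}\pair{\mathcal W}{\mathcal W}_{M(h)}$; but $\mathcal W(h_*)$ lies in the radical, so $\pair{\mathcal W}{\mathcal W}_{M(h)}$ vanishes at $h_*$ and its first-order coefficient equals $R_{|r|,|s|}(q,t)$ times $\pair{w^{\mathrm{sub}}_\xi}{w^{\mathrm{sub}}_\xi}_{M(\widetilde h)}$, by the definition of $R_{|r|,|s|}$ together with the claim that the embedding pulls the contravariant form of $M(h)$ back to $(h-h_*)R_{|r|,|s|}(q,t)$ times that of $M(\widetilde h)$ modulo $O((h-h_*)^2)$. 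Matching powers of $\xi$ and the parameter $\widetilde h\leftrightarrow\widetilde Q=q^rt^{s}$, and absorbing the elementary Jacobian of the change of variable $h\leadsto Q$ into the normalization of $R_{r,s}$ as in the statement, one obtains $\operatorname{Res}_{Q=q^rt^{-s}}g_n=R_{|r|,|s|}(q,t)^{-1}(K_{n-rs}^{-1})_{(1^{n-rs}),(1^{n-rs})}(q^rt^{s},q,t)$, which is exactly the asserted recursion.

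The main obstacle is the last identification: that the embedding $M(\widetilde h)[N]\hookrightarrow M(h_*)$ pulls back the contravariant form of $M(h)$, for $h$ near $h_*$, to the scalar $R_{r,s}(q,t)$ times the contravariant form of $M(\widetilde h)$ at first order in $h-h_*$. The strategy is a first-order deformation analysis: with $S:=\Phi_{|r|,|s|}(q,t)$ one reduces $\pair{T_{-\mu}S.1_h}{T_{-\nu}S.1_h}=\pair{S.1_h}{T_\mu T_{-\nu}S.1_h}$, using the relations \eqref{eq:qvir:rel} of $\vqt$ to push the positive modes to the right, to the analogous pairing in $M(\widetilde h)$ via $T_0.v_{|r|,|s|}=\widetilde h\,v_{|r|,|s|}$, while the singular condition $T_{\ge1}.v_{|r|,|s|}=0$ makes every such pairing vanish at $h_*$ and supplies the common linear factor $h-h_*$, whose coefficient at the bottom ($\mu=\nu=\varnothing$) is $R_{r,s}(q,t)\neq0$ — the nonvanishing being forced by \eqref{eq:Kac} vanishing to order exactly $p(0)=1$ at level $N$. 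This is the deformed-Virasoro analogue of the corresponding step in the four-dimensional arguments of \cite{P:2009}, \cite{HJS:2010a}, \cite{FL:2010} and of the author's Virasoro-limit computation \cite{Y:2011}, with the Kac formula \eqref{eq:Kac} controlling all orders of vanishing throughout.
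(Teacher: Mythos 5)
Your overall strategy --- locate the poles of $(K_n^{-1})_{(1^n),(1^n)}$ via the Kac determinant, prove they are simple, fix the constant term at infinity by a degree count, and compute residues through the submodule generated by the singular vector --- is the right one and largely parallels the paper's proof, which establishes simplicity of the poles by the Etingof--Styrkas minor-divisibility argument (an auxiliary deformation $K^\tau$ and the exact order of vanishing of $\det K_n^\tau$) and reads the residue directly off the $(1^n),(1^n)$ cofactor of $K_n$ in a basis adapted to $N(h_0)$. Your corank/first-order-nondegeneracy argument for simplicity is an acceptable substitute, and your degree count at $Q=\infty$ supplies a step the paper's proof leaves implicit; routing the residue through the degeneration of the Whittaker vector is also workable in principle.

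The genuine gap is in that residue computation. You set $\operatorname{Res}_{h=h_*}w_\xi=\xi^{N}\cdot c\cdot\iota(w^{\mathrm{sub}}_\xi)$ for ``a scalar'' $c$, then compute the first-order coefficient of $\pair{\mathcal{W}}{\mathcal{W}}$ as if $c=1$, getting $R_{|r|,|s|}\cdot\pair{w^{\mathrm{sub}}_\xi}{w^{\mathrm{sub}}_\xi}$, and finally assert the residue $R_{|r|,|s|}^{-1}\cdot(K_{n-rs}^{-1})_{(1^{n-rs}),(1^{n-rs})}(\widetilde{Q})$. These are inconsistent: with the scalar kept, the residue is $c^{2}R_{|r|,|s|}\cdot(\cdots)$, so you need $c=R_{|r|,|s|}^{-1}$, and the change of variable $h\leadsto Q$ contributes only the elementary factor from $h-h_*=Q^{-1}(Q-Q_*)(Q-Q_*^{-1})$ --- it cannot produce the missing $R^{-2}$. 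The value $c=R_{|r|,|s|}^{-1}$ is correct but must be proved. One way: $\pair{S_{r,s}.1_h}{w_N}=\sum_{\mu}H_{\mu}\pair{T_{-\mu}.1_h}{w_N}=H_{(1^{N})}=1$ identically in $h$, while expanding the left side near $h_*$ gives $cR_{|r|,|s|}+0+O(h-h_*)$, the $0$ because the regular part of $w_N$ pairs to zero against the singular vector at $h_*$; hence $c=R_{|r|,|s|}^{-1}$. Equivalently, $K_N^{-1}\sim v v^{T}/\bigl(R_{|r|,|s|}(h-h_*)\bigr)$ with $v$ the coefficient vector of $S_{r,s}$, which is how the paper's cofactor computation sees the factor $R^{-1}$ directly. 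Without this step your recursion comes out with $R_{r,s}$ in place of $R_{r,s}^{-1}$. (The point you flag as the main obstacle --- that the embedding pulls the contravariant form back to $R_{r,s}(h-h_*)$ times the Shapovalov form of $M(\widetilde{h})$ to first order --- is indeed needed by both proofs, and your PBW sketch of it is adequate.)
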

\begin{proof}
We follow the argument of \cite[Lemma 3.2]{ES:1995}, 
which showed that only simple poles may occur 
in matrix elements of the inverse matrix 
of the contravariant forms 
for Verma modules of finite dimensional Lie algebra.

Let $h=Q+Q^{-1}$ be a highest weight and denote by $K_n(Q) := K_n$ the matrix \eqref{eq:Kn}.
The Kac-type formula \eqref{eq:Kac} says
$\det K_n(Q) \propto \prod_{k=1}^n
 \prod_{r | k} \chi_{r,k/r}(h)^{p(n-k)}$
with 
$\chi_{r,s}(h) := h^2-h_{r,s}^2$.

Fix $k,r \in \bbZ{\ge1}$ such that $k \le n$ and $r | k$.
Take a highest weight $h_0=Q_0+Q_0^{-1} $ 
such that $\chi_{r',k'/r'}(h)=0$ if and only if 
$k'=k$ and $r'=r$.
(For $\bbF = \bbQ(q,t)$, it means $h_0=h_{r,k/r}$ and $Q_0=q^{k/2 r}t^{-r/2}$.)
Then $M(h_0)$ is reducible and contains a unique maximal submodule $N(h_0)$
generated by a singular vector $v_{r,k/r}$ 
by Proposition \ref{prop:hom=1}.

Let $z$ be a generic highest weight, i.e., 
$\chi_{r',s'}(z) \neq 0$ for any $r',s' \in \bbZ_{\ge1}$.
Let $\tau$ be an indeterminate.
Using the $\vir_{q,t}^{(0)}$-valued bilinear form $\calF$ 
(see Definition \ref{dfn:Shapovalov} (1) for the definition),
we can introduce 
a new $\bbK[\tau]$-valued form $K^\tau$ on $M(h_0)$ by 
$K^\tau(x.1_{h_0}, y.1_{h_0}) := \calF(x,y)(h_0+\tau z)$ 
for $x, y \in \vir_{q,t}$.

Set $M:=\dim M(h_0)^{(n)} = p(n)$ and 
$N:=\dim N(h_0)=p(n-r s)$.
Choose a basis $\{u_i \mid 1\le i \le M\}$ of $M(h_0)^{(n)}$ so that 
$\{u_i \mid 1\le i \le N\}$ forms a basis of $N(h_0)^{(n)}$.
Then the elements $K^\tau(u_i,u_j)$ will be divisible by $\tau$ 
if $i \le N$ or $j\le N$.

Thus the minor determinant of size $(N-1) \times (N-1)$ of 
$K_n^\tau(Q_0) := (K^\tau(u_i,u_j))_{i,j=1}^M$
is divisible by $\tau ^{N-1}$.
The Kac-type formula \eqref{eq:Kac} implies that 
$\det K_n^\tau$ is divisible exactly by $\tau^N$,
so that every matrix element of ($K_n^\tau(h))^{-1}$ has only simple pole 
at $\tau = 0$.
Thus for any $i$ and $j$ the element $(K_n(Q_0)^{-1}_{i,j})_{i,j}$ 
has at most simple poles on 
the hyperplanes $\chi_{r,k/r}(h)=0$.

Now we may take $\{u_i\}$ such that $u_M = T_{-1}^M.1_h$.
By Proposition \ref{prop:hom=1} 
we may also set 
$\{u_i \mid 1\le i \le N\}= 
 \{ T_{-\mu}.1_h \mid \mu \, \vdash n-r s\}$.
Then, using $h-h_{0} = Q^{-1}(Q-Q_0)(Q-Q_0^{-1})$, 
we find 
${K_n^\tau(Q)}^{-1}_{M,M} = 
 (Q-Q_0)^{-1} R_{r,k/s}(q,t)^{-1} \cdot {K_{n-k}^\tau(Q_0^{-1})}^{-1}_{N,N}$.
This is the desired result.

\end{proof}


Now for the proof of the main theorem,
it is enough to determine the factor $R_{r,s}(q,t)$.

\begin{prop}\label{prop:Rrs}
We have 
\begin{align}\label{eq:Rrs}
 R_{r,s}(q,t) = 
 -\sign(r) q^{-r} t^{s} (q/t)^{-r s}
\prod_{\substack{-|r|\le i\le|r|-1, \\ -|s|\le j\le|s|-1, \\ (i,j)\neq (0,0)}}
(1-q^i t^{-j})
\end{align}
\end{prop}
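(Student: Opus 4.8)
The plan is to compute the leading coefficient $R_{r,s}(q,t)$ directly from the definition
$$
 \pair{\Phi_{|r|,|s|}(q,t).1_h}{\Phi_{|r|,|s|}(q,t).1_h}
 = R_{|r|,|s|}(q,t)\,(h-h_{|r|,|s|}) + o(h-h_{|r|,|s|}),
$$
by expressing both sides in terms of objects that we control: the Shapovalov form, the bosonization $\iota_h$, and the normalization constant $B_{r,s}(q,t)$ from Theorem \ref{thm:sing:norm}. Since the statement is symmetric under swapping the roles of $r$ and $s$ together with $q\leftrightarrow t^{-1}$ (via the isomorphism $\vqt\xrightarrow{\sim}\vir_{t^{-1},q}$) and under $\sign(r)$ it suffices to treat $r,s\in\bbZ_{>0}$; so write $n:=rs$ and work with the honest singular vector $v_{r,s}=\Phi_{r,s}(q,t).1_{h_{r,s}}$ normalized as in \eqref{eq:vrs:normalize}.

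First I would recall that $\pair{v_{r,s}}{x.1_{h_{r,s}}}=0$ for all $x$, so the norm vanishes at $h=h_{r,s}$; the derivative $\partial_h\pair{\Phi_{r,s}(q,t).1_h}{\Phi_{r,s}(q,t).1_h}$ at $h=h_{r,s}$ is therefore $R_{r,s}$. Because $\Phi_{r,s}$ has the triangular form $T_{-1}^{n}+(\text{lower in }T_{-1}\text{-degree})$, pairing against $T_{-1}^{n}.1_h$ isolates the top term, and we may instead compute $R_{r,s}$ as the $(h-h_{r,s})$-linear coefficient of $\pair{v_{r,s}(h)}{T_{-1}^{n}.1_h}$, where $v_{r,s}(h):=\Phi_{r,s}(q,t).1_h$. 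The key point is that $\Phi_{r,s}(q,t)$ is a \emph{fixed} element of $\vqt^{(-n)}$ (independent of $h$), so this pairing is a polynomial in $h$ vanishing simply at $h=h_{r,s}$, and its derivative there equals $R_{r,s}$.

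The main computation then transports everything to symmetric functions via $\iota_h$. Under $\iota_{h_{r,s}}$ we have $\iota_{h_{r,s}}(v_{r,s})=B_{r,s}(q,t)\,J_{(s^r)}(q,t)$ with $B_{r,s}(q,t)=\prod_{i=1}^{r}\prod_{j=1}^{s}(q^i-t^j)/(q^i t^j)$ known from Theorem \ref{thm:sing:norm}. The derivative $\partial_h v_{r,s}(h)|_{h=h_{r,s}}$ maps under the bosonization to something I must relate to the Macdonald norm $\langle J_{(s^r)},J_{(s^r)}\rangle_{q,t}$; concretely, the contravariant form on $M(h)$ corresponds under $\iota_h$ to the Macdonald pairing on $\Lambda_{\bbF_\alpha}$ (this is the content of the bosonization being an isometry for generic $h$), and differentiating in $h$ (equivalently in $\alpha$, via \eqref{eq:qvir:boson:hw}) produces a directional derivative of the Macdonald pairing applied to $J_{(s^r)}$. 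Using the standard norm formula
$$
 \langle J_\lambda(q,t),J_\lambda(q,t)\rangle_{q,t}
 = \prod_{\square\in\lambda}(1-q^{a(\square)+1}t^{l(\square)})(1-q^{a(\square)}t^{l(\square)+1})
$$
from \cite[Chap.~VI]{M:1995} for $\lambda=(s^r)$, together with the explicit $B_{r,s}$, I would obtain $R_{r,s}$ as a product over the boxes of the $r\times s$ rectangle. The arms and legs of $(s^r)$ are $a(\square)=s-j$, $l(\square)=r-i$ for $\square=(i,j)$, so the product telescopes: reindexing by $i'=i-r$, $j'=j-s$ (or directly by the shifted exponents) turns $\prod_{i,j}(1-q^{a+1}t^l)(1-q^a t^{l+1})$ and the $B_{r,s}$-factor into $\prod_{-r\le i\le r-1,\,-s\le j\le s-1}(1-q^i t^{-j})$ with the $(i,j)=(0,0)$ box supplying the simple zero $h-h_{r,s}$ that we divided out, hence its omission, and the monomial prefactor $-q^{-r}t^{s}(q/t)^{-rs}$ collecting the $q^it^i$ denominators of $B_{r,s}$ together with the sign.

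\textbf{Main obstacle.} The delicate step is the bookkeeping of the limit $h\to h_{r,s}$: one must show that $\partial_h v_{r,s}(h)$ at the special weight, paired with $v_{r,s}$ itself, reduces to (a scalar multiple of) the Macdonald self-pairing of $J_{(s^r)}$, i.e. that the ``off-resonant'' contributions — pairings of the derivative against the complement of the submodule generated by $v_{r,s}$ — do not contribute to the $(h-h_{r,s})$-linear term. This is exactly the kind of simple-pole/simple-zero argument used in the preceding Proposition (following \cite{ES:1995}), and I would invoke Proposition \ref{prop:hom=1} (uniqueness of the singular vector, hence of the submodule) to guarantee that the derivative vector lies, modulo $(h-h_{r,s})$, in a one-dimensional space spanned by $v_{r,s}$, so that only the Macdonald norm survives. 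After that the remaining work is the purely combinatorial rectangle-product simplification, which, while somewhat tedious, is routine.
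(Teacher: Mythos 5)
Your strategy rests on the assertion that the contravariant (Shapovalov) form on $M(h)$ is transported by $\iota_h$ to the Macdonald pairing $\pair{\cdot}{\cdot}_{q,t}$ on $\Lambda_{\bbF_\alpha}$, so that $R_{r,s}$ can be extracted from $B_{r,s}(q,t)^2\,\langle J_{(s^r)},J_{(s^r)}\rangle_{q,t}$. This is false, and the failure is visible in your own bookkeeping: $\langle J_{(s^r)},J_{(s^r)}\rangle_{q,t}=\prod_{\square}(1-q^{a(\square)+1}t^{l(\square)})(1-q^{a(\square)}t^{l(\square)+1})$ is a nonzero element of $\bbF$ (for the rectangle all arms and legs are $\ge 0$, so no factor vanishes), whereas the Shapovalov norm of the singular vector $v_{r,s}$ is exactly $0$ at $h=h_{r,s}$ because $v_{r,s}$ lies in the radical. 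So there is no ``$(i,j)=(0,0)$ box supplying the simple zero $h-h_{r,s}$'': your product has $2rs+2rs=4rs$ nonvanishing factors while the target product in \eqref{eq:Rrs} has $4rs-1$, and the discrepancy cannot be repaired by dividing out a factor that is not there. The underlying reason is that the adjoint of the bosonized $T_n$ with respect to the Heisenberg/Macdonald pairing is not $T_{-n}$: in \eqref{eq:qvir:bosonization} the creation and annihilation halves of $\Lambda^{\pm}(z)$ carry different coefficients ($\tfrac{1-t^{-n}}{1+(q/t)^n}$ versus $1-t^{n}$), so $\iota_h$ is an isomorphism of modules but not an isometry of the two forms. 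A second, independent gap is the reduction of $\partial_h\pair{\Phi_{r,s}.1_h}{\Phi_{r,s}.1_h}$ to the single pairing $\pair{\Phi_{r,s}.1_h}{T_{-1}^{n}.1_h}$: the norm equals $\sum_{\lambda}\chi_\lambda\pair{\Phi_{r,s}.1_h}{T_{-\lambda}.1_h}$ and \emph{every} summand vanishes at $h=h_{r,s}$, so after differentiation all of them contribute, not only the $\lambda=(1^{n})$ term; likewise $\partial_h(\Phi_{r,s}.1_h)$ is not a singular vector, so Proposition \ref{prop:hom=1} does not force it into the line spanned by $v_{r,s}$.

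For comparison, the paper's proof avoids the Macdonald pairing entirely at this point: it sets $q=e^{\hbar\ve_1}$, $t=e^{\hbar\ve_2}$, quotes the closed formula \eqref{eq:R'rs} for the analogous quantity $R'_{r,s}(\beta)$ of the Virasoro Lie algebra from \cite{Y:2011}, checks that $\Lim_S$ sends the ratio $R_{r,s}(q,t)/(\text{RHS of \eqref{eq:Rrs}})$ to $R'_{r,s}(\beta)/(\text{RHS of \eqref{eq:R'rs}})=1$, and concludes using $R_{r,s}(q,t)\in\bbF$. The constant $B_{r,s}$ from Theorem \ref{thm:sing:norm} does enter the alternative argument sketched in the paper, but only to locate the zeros of $R_{r,s}$ in $q$ in combination with a degree bound; no isometry with the Macdonald pairing is claimed. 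To salvage your route you would first need to determine the actual adjoint of the bosonized current and the resulting (non-Macdonald) pairing on $\calF_\alpha$, which is a substantial piece of work not supplied by the proposal.
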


The proof will be given in the next subsection.

\subsection{Proof of Proposition \ref{prop:Rrs}}

In \cite{Y:2011} we proved the degenerated version of Proposition \ref{prop:Rrs}.
We will use the parametrization $c= c(\beta) = 13-6(\beta+\beta^{-1})$ 
for the central charge.
For $r,s \in \bbZ_{\ge1}$, 
let $\Phi'_{r,s}(\beta) \in U(\vir^{(-)})$ be such that 
$$
 v'_{r,s} = \Phi'_{r,s}(\beta).1_{c(\beta),h'_{r,s}(\beta)}.
$$
Define $R'_{r,s}(\beta) \in \bbQ(\beta)$ by 
$$
 \pair{\Phi'_{r,s}(\beta).1_{c(\beta),h'(\beta)}}{\Phi'_{r,s}(\beta).1_{c(\beta),h'(\beta)}}
= R'_{r,s}(\beta) \cdot (h'-h'_{r,s}(\beta)) + o(h'-h'_{r,s}(\beta)).
$$
Then the main result in \cite{Y:2011} is   
\begin{align}\label{eq:R'rs}
R'_{r,s}(\beta) = 2\prod_{\substack{(k,l) \in \bbZ^2, \\  1-r \le k \le r,  \ 
    1-s \le l \le s,  \\   (k,l)\neq(0,0),(r,s).}} 
 (k \beta^{-1/2}+ l \beta^{1/2}).
\end{align}

Recall the limit procedure $\Lim_M:M(h) \to M'(c,h')$ given in \eqref{eq:lim_M}.
Since it comes from the algebra limit $\vir_{q,t} \to U(\vir)$,
it induces another degeneration limit 
$$
 \Lim_S: \bbF[h] \to \bbQ(c)[h'],\quad
$$ 
on the Shapovalov forms on $M(h)$ and on $M(c,h')$.
The new limit $\Lim_S$ satisfies 
$$
 \Lim_S \pair{u}{v} = \pair{\Lim_M u}{\Lim_M v}'.
$$
It can be calculated by replacing 
$q=e^{\hbar \ve_1}$, $t=e^{\hbar \ve_2}$, 
taking the limit $\hbar \to 0$,
and take the $\hbar^{2n}$ part on $u,v \in M(h)^{(n)}$.
Recall also that we have 
$h = 2+\ve_1\ve_2\hbar^2(h'+(\ve_1-\ve_2)^2/4)$ 
and $c=13-6(\beta+\beta^{-1})$, $\beta =\ve_2/\ve_1$.

Now by the direct calculation using \eqref{eq:Rrs} and \eqref{eq:R'rs} 
we see that 
$$
 \Lim_S \dfrac{R_{r,s}(q,t)}{\text{RHS of \eqref{eq:Rrs}}} =
 \dfrac{R'_{r,s}(\beta)}{\text{RHS of \eqref{eq:R'rs}}} = 1.
$$
Since $R_{r,s}(q,t) \in \bbF$, 
this degeneration limit uniquely determines $R_{r,s}(q,t)$,
so that we get the final result.

\subsection{Another Proof}

Here we give a sketch of another proof of Theorem \ref{thm:main}.
We follow  the line of the argument in \cite{Y:2011}.
It consists of the following steps.
\begin{description}
\item[Step 1]
Estimate the degree of $R_{r,s}(q,t)$ as the Laurent polynomial of $q$
using the degenerate limit to the non-deformed Virasoro Lie algebra case.

\item[Step 2]
Determine the set $S$ of the zeros of $R_{r,s}(q,t)$ with respect to $q$. 
and it is divided into three sub-steps:
\begin{itemize}
\item 
 An upper bound of $\# S$ is obtained from the degree estimation 
 in Step 1.

\item 
 Show that $S$ includes a certain subset $S_0$ using bosonization. 
 $S_0$ is determined from a coefficient $B_{r,s}(q,t)$ of the Macdonald symmetric function 
 appearing in the bosonization of the singular vector $v_{r,s}$ 
 (Theorem \ref{thm:sing:norm}).  
 We also show that $S$ is invariant under the action $(q,t) \mapsto (1/t,q)$. 
 Thus $S$ contains $S_0\cup \tau S_0$, where $\tau S_0 := \{t^{-1/d} \mid t^{d} \in S_0\}$.

\item 
 Since $\#(S_0\cup \tau S_0)$ is equal to the upper bound, 
 $S$ should be equal to $S_0 \cup \tau  S_0$.
\end{itemize}

As the result of Step 1 and Step 2, 
$R_{r,s}(q,t)$ is determined up to a factor depending only on $t$.

\item[Step 3] 
Determine the $t$-factor of $R_{r,s}(q,t)$.
It can be done by the degree estimate of $R_{r,s}(q,t)$ with respect to $t$.
\end{description}

\appendix
\section{Jantzen filtration and classification of weights}

Following the argument of \cite{FF2,IK} in the case of the Virasoro Lie algebra,
we describe the classification of highest weights 
respecting the Jantzen filtration.

\subsection{Jantzen Filtration}

Let us recall the notion of Jantzen filtration \cite{J}.
Let $R$ be a commutative algebra over a field $\bbK$.
Assume that $R$ is a PID.
Denote by $Q(R)$ the quotient field of $R$.
Fix a uniformizer $\tau$ of $R$.
Let 
$$
 \nu_\tau:R \to \bbZ_{\ge0} \cup \{\infty\}
$$ 
be a $\tau$-adic valuation,
and denote the residue field by $K := R/\tau R$.
Let us introduce the functor 
$$
 \phi(M) := K \otimes_R M
$$ 
{}from the category of $R$-modules 
to the category of $K$-vector spaces.
For an element $v$ in an $R$-module $M$, 
we also use the same symbol $\phi$ in the meaning of 
$\phi(v) := 1\otimes v \in \phi(M)$.

Assume that $N$ is a finite-dimensional $Q(R)$-vector space 
having a non-degenerate symmetric bilinear form 
$(-,-)_N: N \otimes N \to Q(R)$.
Assume also that there is an $R$-sublattice $N_R$ of $N$ 
such that 
$$
 (N_R,N_R)_N \subset R,\qquad 
 Q(R) \otimes_R N_R = N.
$$
Since we have assumed $N$ is finite-dimensional,
$N_R$ is a free $R$-module of rank $r$ with $r := \dim_{Q(R)} N$.
Choose an $R$-free basis $\{e_1,e_2,\ldots,e_r\}$ of $N_R$ and set 
$$
 D_R := \det \bigl( (e_i,e_j)_N \bigr)_{1\le i,j \le r}.
$$
This determinant is unique up to multiplication by units in $R$,
so $\nu_\tau(D_R)$ is independent of the choice of the basis.

Now define the symmetric bilinear form $(-,-)$ on the $K$-vector space 
$N_K := \phi(N_R)$ by 
$$
 (\phi(v),\phi(w)) := \pi\bigr((v,w)_N\bigr),\quad
 v,w \in N_R.
$$
Here $\pi: R \to K$ is the projection to the quotient field.
This form is non-degenerate.

\begin{dfn}
For a non-negative integer $d$, we set
$$
 N_R(d) := \{ v \in N_R \mid (v,N_R)_N \subset \tau^d R\}
$$
and denote by $\iota_d: N_R(d) \hookrightarrow N_R$ the inclusion.
Further we define
$$
 N_K(d) := \phi \circ \iota_d\bigl(N_R(d)\bigr).
$$
Then we have a filtration 
$$
 N_K = N_K(0) \supset N_K(1) \supset N_K(2) \supset \cdots
$$
in the $K$-vector space $N_K$.
This filtration is called the Jantzen filtration.
\end{dfn}

\begin{lem}
\begin{enumerate}
\item 
We have
$$
 N_K(1) = \Rad_{(-,-)}. 
$$
\item
For each positive integer $d$, 
the symmetric bilinear form  $(-,-)_d: N_K(d) \otimes N_K(d) \to K$ defined by 
$$
 \bigl( \phi(v),\phi(w) \bigr)_d := \pi\bigl(\tau^{-d}(v,w)_{N} \bigr),\quad
 v,w \in N_R(d)
$$
is non-degenerate.
\item
We have
$$
\nu_\tau(D_R) = \sum_{d\ge1}\dim_K N_K(d).
$$
\end{enumerate}
\end{lem}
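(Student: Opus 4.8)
The plan is to reduce to the case that $R$ is a discrete valuation ring and then read everything off from the elementary divisor (Smith) normal form of the Gram map of the form $(-,-)_N$. Replacing $R$ by its localization at the prime ideal $\tau R$ changes neither $N_K$, nor the subspaces $N_K(d)$, nor $\nu_\tau(D_R)$, so I assume from now on that $R$ is a DVR with maximal ideal $\tau R$ and residue field $K$, and I write $r := \dim_{Q(R)} N$. Consider the $R$-linear map
$$
 \gamma\colon N_R \longrightarrow N_R^{*} := \Hom_R(N_R,R),\qquad \gamma(v):=(v,-)_N .
$$
Since $(-,-)_N$ is non-degenerate, $\gamma\otimes_R Q(R)$ is an isomorphism, so $\gamma$ is injective with cokernel of finite length. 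By the elementary divisor theorem there are $R$-bases $\{e_1,\dots,e_r\}$ of $N_R$, $\{g_1,\dots,g_r\}$ of $N_R^{*}$, and integers $0\le a_1\le\cdots\le a_r$ with $\gamma(e_i)=\tau^{a_i}g_i$. Computing $D_R$ in the basis $\{e_i\}$ and computing $\det\gamma$ in the bases $\{e_i\},\{g_i\}$ differ only by a unit of $R$, so $\nu_\tau(D_R)=\sum_i a_i$; and for $v=\sum_i c_ie_i$ one has $v\in N_R(d)\iff\gamma(v)\in\tau^dN_R^{*}\iff c_i\tau^{a_i}\in\tau^dR$ for all $i$, whence
$$
 N_R(d)=\bigoplus_{i=1}^{r}\tau^{\max(d-a_i,0)}R\,e_i,\qquad
 N_K(d)=\bigoplus_{i:\,a_i\ge d}K\,\overline{e_i},
$$
so $\dim_K N_K(d)=\#\{i:a_i\ge d\}$. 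Part (3) is now immediate: $\sum_{d\ge1}\dim_K N_K(d)=\sum_{d\ge1}\#\{i:a_i\ge d\}=\sum_i a_i=\nu_\tau(D_R)$.

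For part (1): reducing $\gamma$ modulo $\tau$ and using that $N_R$ is free gives a map $\overline{\gamma}\colon N_K\to N_R^{*}/\tau N_R^{*}\cong (N_K)^{*}$ which is exactly the Gram map $\overline{v}\mapsto(\overline{v},-)$ of the form $(-,-)$ on $N_K$, so $\Rad_{(-,-)}=\Ker\overline{\gamma}$. Now $\overline{\gamma}(\overline{e_i})=\pi(\tau^{a_i})\,\overline{g_i}$ is $0$ when $a_i\ge1$, while the $\overline{g_i}$ with $a_i=0$ are part of a basis of $(N_K)^{*}$, hence linearly independent; therefore $\Ker\overline{\gamma}=\bigoplus_{i:\,a_i\ge1}K\,\overline{e_i}=N_K(1)$.

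For part (2): the key input is symmetry of $(-,-)_N$, namely $\tau^{a_i}g_i(e_j)=(e_i,e_j)_N=(e_j,e_i)_N=\tau^{a_j}g_j(e_i)$, which forces $g_i(e_j)\in\tau R$ whenever $a_j>a_i$. Hence, with indices ordered by the value of $a_\bullet$, the matrix $\bigl(\overline{g_i(e_j)}\bigr)$ over $K$ is block lower triangular; as $\bigl(g_i(e_j)\bigr)\in GL_r(R)$ (it is the base-change matrix from $\{g_i\}$ to the dual basis of $\{e_i\}$), its reduction lies in $GL_r(K)$, so each diagonal block $\bigl(\overline{g_i(e_j)}\bigr)_{a_i=a_j=d}$ is invertible. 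For $\overline{e_i},\overline{e_j}\in N_K(d)$ one computes $(\overline{e_i},\overline{e_j})_d=\pi\bigl(\tau^{a_i-d}g_i(e_j)\bigr)$, which vanishes unless $a_i=a_j=d$; combining this with invertibility of the $d$-th diagonal block gives $\Rad_{(-,-)_d}=N_K(d+1)$, so that $(-,-)_d$ induces a non-degenerate pairing on $N_K(d)/N_K(d+1)$, which is the assertion.

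The one place needing care — and the main obstacle — is part (2): both the block-triangularity bookkeeping and the identification of the radical precisely as $N_K(d+1)$ (rather than something larger) hinge on invoking the symmetry relation at exactly the right moment. Everything else is a direct unwinding of the elementary divisor normal form of $\gamma$.
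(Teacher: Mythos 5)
Your argument is correct and is, in essence, the standard proof from Jantzen's monograph \cite{J}; the paper itself offers no proof of this lemma (it only remarks that the PID hypothesis enters in part (3)), so there is nothing to compare against except the statement itself. The reduction to a discrete valuation ring, the elementary divisor form $\gamma(e_i)=\tau^{a_i}g_i$ of the Gram map, the identities $\nu_\tau(D_R)=\sum_i a_i$ and $\dim_K N_K(d)=\#\{i\mid a_i\ge d\}$, and the use of symmetry of the form to obtain the block-triangular shape of $\bigl(\overline{g_i(e_j)}\bigr)$ are all carried out correctly; parts (1) and (3) follow exactly as you say.

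The one point to flag concerns part (2), and it is a defect of the printed statement rather than of your proof. What you actually establish --- correctly --- is that the radical of $(-,-)_d$ on $N_K(d)$ equals $N_K(d+1)$, so that $(-,-)_d$ descends to a non-degenerate form on the quotient $N_K(d)/N_K(d+1)$. That is the standard formulation (Jantzen; see also \cite[Chap.~5]{IK}) and it is what part (3) and the applications in the appendix actually require. The literal assertion of the lemma, non-degeneracy on $N_K(d)$ itself, is false whenever $N_K(d+1)\neq 0$: already for $R$ a DVR and Gram matrix $\diag(\tau,\tau^2)$ one has $N_K(1)=N_K=K^2$ while the Gram matrix of $(-,-)_1$ is $\diag(1,0)$. (The same slip occurs in the sentence preceding the lemma, where the induced form $(-,-)$ on $N_K$ is claimed to be non-degenerate, contradicting part (1).) So your closing phrase ``which is the assertion'' is accurate only after replacing $N_K(d)$ by $N_K(d)/N_K(d+1)$ in the statement; you should say so explicitly. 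Two minor remarks: the well-definedness of $(-,-)_d$ on $N_K(d)$ (independence of the choice of lifts, which again uses symmetry) is worth one line, and your uniform use of the elementary divisor theorem invokes the PID hypothesis for all three parts, whereas part (1) holds for arbitrary $R$ directly from the definitions --- harmless, but it is the reason the paper's remark singles out part (3).
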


\begin{rmk}
The assumption of PID for $R$ is used only in the proof of (3).
\end{rmk}

We may apply this formulation to $M(h)^{(n)}$ for each $n \in \bbZ_{\ge 0}$
by the following argument.
Let $T$ be an indeterminate and set $R := \bbF[T]$,
so that $Q(R)=\bbF(T)$ and $K = \bbF$.
Set $N :=  R \otimes_K M(h)^{(n)}$, which is finite-dimensional.
We have $N_R = N_K = M(h)^{(n)}$.
The bilinear form  $(-,-)_N$ is set to be the induced one from 
$\langle -,-\rangle$ on $M(h)^{(n)}$ by linearity, 
which is non-degenerate 
since $\langle -,-\rangle$ is so by the Kac determinant formula \eqref{eq:Kac}.
Thus we have the Jantzen filtration $\{M(h)^{(n)}(d) \mid d\ge 0\}$ 
on the graded component $M(h)^{(n)}$ of the Verma module for each $n$.

\begin{dfn}
Taking the direct sum over $n$,
we have a filtration on the Verma module $M(h)$.
This  will be called the Jantzen filtration on $M(h)$ 
and denoted by $\{M(h)(d) \mid d \ge 0\}$.
\end{dfn}

\begin{lem}
For each $d \ge 0$,
$M(h)(d)$ is a $\vqt$-submodule of the Verma module $M(h)$.
In particular,
$M(h)(1)$ is a maximal submodule of $M(h)$.
\end{lem}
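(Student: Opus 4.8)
The heart of the matter is the contravariance of the Shapovalov pairing, and once that is available over the base ring $R$ everything else is formal. First I would record that the $R$-valued form $(-,-)_N$ on the lattice $N_R$ is contravariant over $R$, i.e. $(x.v,w)_N=(v,\sigma(x).w)_N$ for all $x\in\vqt$ and $v,w\in N_R$, where $\sigma\colon\vqt\to\vqt$, $T_n\mapsto T_{-n}$, is the anti-involution; this is proved word for word as the corresponding statement for $\langle-,-\rangle$ on $M(h)$ in the Lemma following Definition~\ref{dfn:Shapovalov}, the argument being insensitive to the value of the highest weight. I would also note that $(-,-)_N$ pairs distinct graded components to zero, so that $N_R(d)$ --- and with it $M(h)(d)$ --- is $\bbZ$-graded; it therefore suffices to check $\vqt$-stability against homogeneous elements.

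The second step is the direct verification. Fix $d\ge 0$ and a homogeneous $x\in\vqt$ of degree $m$, and let $v\in N_R(d)$, so that $(v,u)_N\in\tau^d R$ for every $u$ in the lattice lying in the relevant degree. For any such $w$ we have $(x.v,w)_N=(v,\sigma(x).w)_N$, and $\sigma(x)$, being homogeneous of degree $-m$, carries the lattice into itself; hence $(x.v,w)_N\in\tau^d R$, i.e. $x.v\in N_R(d)$. Since $\tau$ lies in the central base ring $R$, the submodule $\tau N_R$ is $\vqt$-stable and the reduction map $\phi\colon N_R\to N_K$ is a morphism of $\vqt$-modules inducing the action on the graded pieces of $M(h)$; therefore $x.\phi(v)=\phi(x.v)\in N_K(d)$. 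Summing over all $n$, we conclude that $M(h)(d)=\bigoplus_n N_K(d)$ is a $\vqt$-submodule of $M(h)$.

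For the last assertion, part~(1) of the preceding Lemma gives $N_K(1)=\Rad_{(-,-)}$ in each graded degree, and the form $(-,-)$ obtained by reduction modulo $\tau$ is precisely the Shapovalov form $\langle-,-\rangle$ of $M(h)$; taking the direct sum over $n$ yields $M(h)(1)=\Rad_{\langle-,-\rangle}$. By the Lemma following Definition~\ref{dfn:Shapovalov} this radical is a maximal $\vqt$-submodule of $M(h)$, and it is proper because $\langle 1_h,1_h\rangle=1$; by the uniqueness of the maximal proper submodule $N(h)$ it coincides with $N(h)$, so $M(h)(1)$ is a maximal submodule. I do not anticipate a genuine obstacle: the only point requiring care is keeping the identifications of $N_R$, $N_K$ and their bilinear forms simultaneously compatible with the $\vqt$-action and with the grading, which is immediate from the construction of this section.
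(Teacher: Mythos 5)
The paper states this lemma without proof, so there is nothing to compare against; your argument is the standard one and is correct. The two ingredients you isolate are exactly what is needed: contravariance of the $R$-valued form plus $\vqt$-stability of the lattice gives $x.N_R(d)\subset N_R(d)$ degree by degree, and the identification of $M(h)(1)$ with $\Rad_{\langle-,-\rangle}$ together with part (3) of the lemma on the Shapovalov form gives maximality. The only caveat is inherited from the paper's exposition rather than from your proof: as literally written the form $(-,-)_N$ on $N_R=M(h)^{(n)}$ does not depend on $T$, which would render the filtration trivial; your argument tacitly (and correctly) uses the intended $\tau$-deformation of the highest weight, as in the form $\calF(x,y)(h_0+\tau z)$ appearing in the proof of the proposition in \S 4.
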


\subsection{Classification of highest weights}

Hereafter we specialize $q,t$ to complex numbers.
The defining field $\bbF=\bbQ(q,t)$ is replaced by $\bbC$,
and the highest weight $h$ of the Verma module $M(h)$ is also 
considered to be a complex number.
For the algebra $\vqt$ to be well defined,
we assume that $q/t$ is not a root of unity.
In this setting, 
let us study the irreducibility of $M(h)$ in detail.

We want to determine the conditions for the highest weight $h$ 
to classify the behavior of Jantzen filtration of the Verma module $M(h)$.

Looking at the Kac determinant formula \eqref{eq:Kac},
we set 
$$
 D(q,t,h) := 
  \bigl\{(r,s) \in (\bbZ_{\ge1})^2 \mid
         h^2 - h_{r,s}^2 = 0 \bigr\}.
$$

\begin{dfn}
The class of $(q,t,h)$ is defined by the following condition on $D(q,t,h)$.
\begin{description}
 \item[Class $V$]  $D(q,t,h) = \emptyset$. 
 \item[Class $I$] $\# D(q,t,h) = 1$. 
 \item[Class $R$] $\# D(q,t,h) \ge 2$.
\end{description}
\end{dfn}

\begin{rmk}
\begin{enumerate}
\item
The symbols V, I and R mean vacant, irrational and rational respectively.
This naming follows \cite[Chap 5]{IK}.
\item
If $(q,t,h)$ belongs to \cls{V}, then $M(h)$ is irreducible.
\end{enumerate}
\end{rmk}

Now we write down a (rough) classification of  $(q,t,h)$ 
according to the values of $q$ and $t$.
We choose branches of $\arg q$ and $\arg t$ in $[0,2\pi)$.

\begin{lem}\label{lem:class}
Assume $q,t \neq0$ and neither $q$ nor $t$ is a root of unity.
Then the class of $(q,t,h)$ is classified in the following manner.
\begin{enumerate}
 \item $|q| \neq 1$. 
 \begin{enumerate}
  \item $\log|t| / \log |q| \notin \bbQ$ or 
        $\arg t - \arg q \log|t| / \log |q| \notin 2\pi \bbQ$.
    \cls{V} or $I$.
  \item $\log|t| / \log |q| \in \bbQ \setminus\{0\}$ 
        and $\arg t - \arg q \log|t| / \log |q| \in 2\pi \bbQ$.
    \cls{V} or $I$ or $R$.
  \item $|t|=1$.
  \begin{enumerate}
   \item $\arg t \notin 2 \pi \bbQ$.
    \cls{V} or $I$.
   \item $\arg t \in 2 \pi \bbQ$.
    We don' consider this case here. 
  \end{enumerate}
 \end{enumerate}
 \item $|q| = 1$. 
 \begin{enumerate}
  \item $|t| \neq 1$.
  \begin{enumerate}
   \item $\arg q \notin 2 \pi \bbQ$.
    \cls{V} or $I$.
   \item $\arg q \in 2 \pi \bbQ$.
    We don' consider this case here. 
  \end{enumerate}
  \item $|t| = 1$.
  \begin{enumerate}
   \item $\arg q \notin 2 \pi \bbQ$ and $\arg t \notin 2 \pi \bbQ$.
    \cls{V} or $I$ or $R$.
   \item the other cases
    We don' consider this case here. 
  \end{enumerate}
 \end{enumerate}
\end{enumerate}
\end{lem}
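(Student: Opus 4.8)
The plan is to reduce the whole classification to an elementary question about multiplicative relations between the complex numbers $q$ and $t$, paralleling the treatment of the Virasoro Lie algebra in \cite[Chap.~5]{IK}. Writing the Kac zeros in squared form, $h_{r,s}^2 = t^r q^{-s} + 2 + t^{-r} q^s$, the condition $(r,s) \in D(q,t,h)$ reads $h^2 - 2 = t^r q^{-s} + t^{-r} q^s$. The elementary fact that, for $x,y \in \bbC \setminus \{0\}$, one has $x + x^{-1} = y + y^{-1}$ if and only if $x = y$ or $x = y^{-1}$, applied with $x = t^r q^{-s}$ and $y = t^{r'} q^{-s'}$, gives the key equivalence: for $(r,s) \neq (r',s')$ in $(\bbZ_{\ge 1})^2$,
\[
 h_{r,s}^2 = h_{r',s'}^2
 \iff
 q^{s'-s} t^{r-r'} = 1 \quad\text{or}\quad q^{-s-s'} t^{r+r'} = 1 .
\]

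I would then introduce the relation group $\Gamma := \{(m,n) \in \bbZ^2 \mid q^m t^n = 1\}$ and prove the dichotomy that governs \cls{R}: the class \cls{R} is attained by some highest weight $h$ if and only if $\Gamma \neq \{0\}$. For the forward implication, two distinct members $(r,s),(r',s')$ of some $D(q,t,h)$ produce, through the displayed equivalence, a nonzero element of $\Gamma$, since $(s'-s,r-r') \neq (0,0)$ and $-s-s' \le -2$ rule out the trivial relation in either case. For the converse, note that $\Gamma \neq \{0\}$ is then infinite cyclic (a rank-two $\Gamma$ would make both $q$ and $t$ roots of unity), and a generator $(m_0,n_0)$ has $m_0 \neq 0$ and $n_0 \neq 0$ because neither $q$ nor $t$ is a root of unity; so the system $s' - s = m_0$, $r - r' = n_0$ can be solved with $r,r',s,s' \ge 1$, yielding distinct admissible pairs with $h_{r,s}^2 = h_{r',s'}^2$ and hence $\# D(q,t,h_{r,s}) \ge 2$. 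I would also record the two remaining facts: \cls{V} always occurs, because $\{\,h \mid D(q,t,h) \neq \emptyset\,\} = \{\,\pm h_{r,s}\,\}$ is countable; and \cls{I} occurs whenever $\Gamma = \{0\}$, since then $h_{1,1}^2 \neq 0$ (equivalent to $q/t \neq -1$, which holds because $q/t$ is not a root of unity), while any coincidence $h_{1,1}^2 = h_{r,s}^2$ with $(r,s) \neq (1,1)$ would again force $\Gamma \neq \{0\}$, so that $D(q,t,h_{1,1}) = \{(1,1)\}$.

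It then remains to match the criterion ``$\Gamma = \{0\}$'' against the regimes of $(|q|,|t|,\arg q,\arg t)$ in the statement. Reading $q^m t^n = 1$ as the two conditions $m \log|q| + n \log|t| = 0$ and $m \arg q + n \arg t \in 2\pi\bbZ$, I would argue case by case. When $|q|,|t| \neq 1$, a nonzero solution of the norm equation forces $\log|t|/\log|q| = -m/n \in \bbQ$, and the angle condition then becomes $\arg t - (\log|t|/\log|q|)\arg q \in 2\pi\bbQ$; thus $\Gamma = \{0\}$ exactly in case (1)(a), while in case (1)(b) it may be nontrivial. When exactly one of $|q|,|t|$ differs from $1$, the norm equation kills the exponent attached to that variable, so a nontrivial $\Gamma$ would make the remaining (unit-modulus) variable a root of unity, which is excluded; hence $\Gamma = \{0\}$ in cases (1)(c)(i) and (2)(a)(i). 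When $|q| = |t| = 1$, only the angle condition survives, and it has a nonzero integer solution exactly when $2\pi$, $\arg q$, $\arg t$ are $\bbQ$-linearly dependent, so both possibilities occur — case (2)(b)(i). Finally I would observe that the omitted sub-cases (1)(c)(ii), (2)(a)(ii) and (2)(b)(ii) are vacuous under the standing hypotheses, as each of them forces $q$ or $t$ to be a root of unity. The argument is elementary throughout; the only real obstacle is the bookkeeping — lining up the lowest-terms reductions and the three non-root-of-unity hypotheses (on $q$, on $t$ and on $q/t$) with the terse case labels of the statement.
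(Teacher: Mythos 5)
Your proof is correct and follows essentially the same route as the paper's: both reduce $h_{r,s}^2 = h_{r',s'}^2$ via $x+x^{-1}=y+y^{-1}\Leftrightarrow x=y^{\pm1}$ to a multiplicative relation $q^mt^n=1$ and then split it into the modulus and argument conditions appearing in the case labels. Your packaging of these relations as the subgroup $\Gamma\subset\bbZ^2$ together with the rank argument is just a tidier (and somewhat more complete) organization of the same computation, which the paper carries out explicitly only for case (1)(a) and otherwise dismisses with ``the other cases can be shown by similar arguments.''
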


\begin{proof}
Since $x+x^{-1} = y+y^{-1} \Longleftrightarrow x=y ^{\pm1}$,
we have
\begin{align}
\nonumber
 h_{r,s}^2 = h_{r',s'}^2
&\Longleftrightarrow
 t^r q^{-s} = (t^{r'} q^{-s'})^{\pm1}
\\
\nonumber
&\Longleftrightarrow
 |t|^r |q|^{-s} = (|t|^{r'} |q|^{-s'})^{\pm1} 
 \text{ \ and \ } 
 r \arg t - s \arg q  \in  \pm(r'\arg t - s'\arg q) + 2\pi\bbZ
\\
\label{eq:class:R}
&\Longleftrightarrow
 (r \mp r')\log |t| = (s \mp s')\log |q| 
 \text{ \ and \ } 
 (r \mp r')\arg t - (s \mp s') \arg q \in 2\pi \bbZ.
\end{align}

In the case (1), we have
\begin{align}
\label{eq:class:R:case1}
 \eqref{eq:class:R} \text{ for some } (r,s) \neq (r',s')
\Longleftrightarrow
  (r\mp r')\dfrac{\log |t|}{\log|q|} \in \bbZ \text{ and } 
  (r\mp r')\Bigl(\arg t-\dfrac{\log |t|}{\log|q|} \arg q \Bigr)  \in 2\pi \bbZ.
\end{align}
Therefore in the  cases (1) (a), 
the condition $h^2=h_{r,s}^2$ determines $(r,s)$ uniquely.

The other cases can be shown by similar arguments.
\end{proof}

\subsection{\cls{R}}

The most interesting is \cls{R} with $|q|\neq1$ and $|t|\neq 1$.
We want to determine the number $\# D(q,t,h)$ precisely,
and to give an explicit parametrization of the set $D(q,t,h)$.

Let us study \cls{R} in the case (1) (b) of Lemma \ref{lem:class} first.
Set 
\begin{align*}
 h = |q|^{m/2}e^{\sqrt{-1}\theta} +  |q|^{-m/2}e^{-\sqrt{-1}\theta} .
\end{align*}
Then we have
\begin{align}
\label{eq:classR:h-theta}
 h^2=h_{r,s}^2 \Longleftrightarrow
 \pm m = -s + r \dfrac{\log|t|}{\log|q|} \text{ and }
 \pm \theta \in -s\dfrac{\arg q}{2} + r \dfrac{\arg t}{2} + \pi \bbZ.
\end{align}
Since we assume that $(q,t,h)$ is in \cls{R},
there exists at least one solution of these equations.

In the present case we may put
\begin{align}
\label{eq:classR:PQ}
 \log|t|/\log|q| = Q/P,\quad Q,P\in\bbZ\setminus\{0\},\quad 
 \gcd(Q,P)=1,\quad 
 P>0.
\end{align}
According to \eqref{eq:class:R:case1} in the proof of Lemma \ref{lem:class},
we may also put
\begin{align}
\label{eq:classR:A}
 \arg t = \dfrac{Q}{P}\arg q+2\pi\dfrac{A}{P},\quad A \in \bbZ.
\end{align}

The first condition in \eqref{eq:classR:h-theta} implies that 
it is natural to divide \cls{R} with $|q|\neq1$ and $|t|\neq1$ 
into the following two subclasses.
\begin{description}
\item[Class $R^+$] $\log|t|/\log|q| > 0$.
\item[Class $R^-$] $\log|t|/\log|q| < 0$.
\end{description}

\subsubsection{\cls{R^+}}
In this subclass,
the integer $Q$ in \eqref{eq:classR:PQ} is positive. 
The first condition $s=r Q/P \mp m$ in \eqref{eq:classR:h-theta} 
has infinite solutions for $(r,s) \in (\bbZ_{\ge1})^2$.
Let us first parametrize these solutions as 
$$
 \{(r'_i,s'_i) \mid i \in \bbZ_{\ge1}\} \cup 
 \{(r''_i,s''_i) \mid i \in \bbZ_{\ge1}\},\quad
 r'_1 < r'_2 < \cdots,\ 
 r''_1 < r''_2 < \cdots.
$$ 
Here $(r',s')$'s are solutions of $s=r Q/p-m$ and 
$(r'',s'')$'s are solutions of $s=r Q/p+m$ 
They enjoy the relations
\begin{align}
\label{eq:classR:+:rs}
 r'_i = r'_1 + P(i-1),\quad s'_i = s'_1 + Q(i-1).
\end{align}
$r''_i$'s and $s''_i$'s also satisfy the same relations.
Then we have 
$$
 D(q,t,h) = \{(r'_i,s'_i) \mid i \in \bbZ_{\ge1}\}
 \cup \{(r''_i,s''_i) \mid i \in \bbZ_{\ge1}\}.
$$
To show this claim, note that 
\begin{align*}
 \Bigl( -s'_i\dfrac{\arg q}{2} + r'_i \dfrac{\arg t}{2}\Bigr)
-\Bigl( -s'_1\dfrac{\arg q}{2} + r'_1 \dfrac{\arg t}{2}\Bigr)
= (i-1)\Bigl(-Q\dfrac{\arg q}{2} +P \dfrac{\arg t}{2}\Bigr)
=(i-1)A \pi 
\end{align*}
by \eqref{eq:classR:A} and \eqref{eq:classR:+:rs}.
Thus if some $(r'_i,s'_i)$ satisfies the second condition in \eqref{eq:classR:h-theta},
then all the others also satisfy the same condition,
which implies $(r'_i,s'_i) \in D(q,t,h)$ for any $i$.
With a similar argument for $(r''_i,s''_i)$, we have the claim.

Next we re-parametrize $D(q,t,h)$ as
$$
 D(q,t,h) = \{(r_i,s_i) \mid i \in \bbZ_{\ge1}\},\quad
 r_1 s_1 \le r_2 s_2 < r_3 s_3 \le r_4 s_4 < \cdots.
$$
One can see that $r_{2i} s_{2i} = r_{2i+1} s_{2i+1}$ holds 
if and only if $r_1/P \in \bbZ$ and $s_1/Q \in \bbZ$. 
See Figure \ref{fig:classR+} for the visual explanation.
\begin{figure}[htbp]
\centering
{\unitlength 0.1in%
\begin{picture}( 42.0000, 36.4000)(  1.0000,-38.0000)%
\special{pn 8}%
\special{pa 600 2000}%
\special{pa 4200 2000}%
\special{fp}%
\special{sh 1}%
\special{pa 4200 2000}%
\special{pa 4133 1980}%
\special{pa 4147 2000}%
\special{pa 4133 2020}%
\special{pa 4200 2000}%
\special{fp}%
\special{pn 8}%
\special{pa 2400 3800}%
\special{pa 2400 200}%
\special{fp}%
\special{sh 1}%
\special{pa 2400 200}%
\special{pa 2380 267}%
\special{pa 2400 253}%
\special{pa 2420 267}%
\special{pa 2400 200}%
\special{fp}%
\special{pn 8}%
\special{pa 600 2750}%
\special{pa 4000 200}%
\special{fp}%
\special{pn 4}%
\special{ar 800 2600 24 24 0  6.28318530717959E+0000}%
\special{ar 1000 2450 24 24 0  6.28318530717959E+0000}%
\special{ar 1200 2300 24 24 0  6.28318530717959E+0000}%
\special{ar 1400 2150 24 24 0  6.28318530717959E+0000}%
\special{sh 1}%
\special{ar 2600 1250 24 24 0  6.28318530717959E+0000}%
\special{sh 1}%
\special{ar 2800 1100 24 24 0  6.28318530717959E+0000}%
\special{sh 1}%
\special{ar 3000 950 24 24 0  6.28318530717959E+0000}%
\special{sh 1}%
\special{ar 3200 800 24 24 0  6.28318530717959E+0000}%
\special{sh 1}%
\special{ar 3400 650 24 24 0  6.28318530717959E+0000}%
\special{sh 1}%
\special{ar 3600 500 24 24 0  6.28318530717959E+0000}%
\special{sh 1}%
\special{ar 3800 350 24 24 0  6.28318530717959E+0000}%
\special{pn 8}%
\special{pa 4200 1250}%
\special{pa 800 3800}%
\special{dt 0.045}%
\special{pn 4}%
\special{sh 1}%
\special{ar 4000 1400 24 24 0  6.28318530717959E+0000}%
\special{sh 1}%
\special{ar 3800 1550 24 24 0  6.28318530717959E+0000}%
\special{sh 1}%
\special{ar 3600 1700 24 24 0  6.28318530717959E+0000}%
\special{sh 1}%
\special{ar 3400 1850 24 24 0  6.28318530717959E+0000}%
\put(41.0,-22.0){\makebox(0,0)[lb]{$r$}}%
\put(22.5,-4.0){\makebox(0,0)[lb]{$s$}}%
\put(10.5,-17.0){\makebox(0,0)[lb]{$s=r Q/P +m$}}%
\put(26.5,-25.5){\makebox(0,0)[lb]{$s=r Q/P -m$}}%
\put(27.0,-14.0){\makebox(0,0)[lb]{$(r'_1,s'_1)$}}%
\put(29.0,-12.0){\makebox(0,0)[lb]{$(r'_2,s'_2)$}}%
\put(25.0,-9.5){\makebox(0,0)[lb]{$(r'_3,s'_3)$}}%
\put(31.0,-5.5){\makebox(0,0)[lb]{$(r'_6,s'_6)$}}%
\put(33.5,-3.5){\makebox(0,0)[lb]{$(r'_7,s'_7)$}}%
\put(28.5,-19.0){\makebox(0,0)[lb]{$(r''_1,s''_1)$}}%
\put(31.0,-17.0){\makebox(0,0)[lb]{$(r''_2,s''_2)$}}%
\put(35.0,-14.0){\makebox(0,0)[lb]{$(r''_4,s''_4)$}}%
\put(15.0,-22.5){\makebox(0,0)[lb]{$(-r''_1,-s''_1)$}}%
\put(12.5,-24.5){\makebox(0,0)[lb]{$(-r''_2,-s''_2)$}}%
\put(8.50,-27.5){\makebox(0,0)[lb]{$(-r''_4,-s''_4)$}}%
\end{picture}}%
\caption{Parametrizing $D(q,t,h)$ for \cls{R^{+}}.}
\label{fig:classR+}
\end{figure}
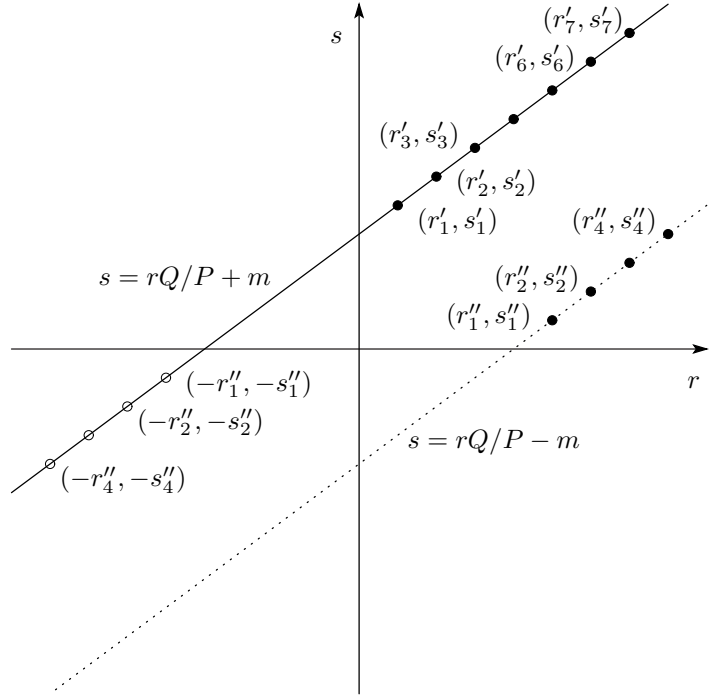

Now an easy observation (see \cite[Lemma 5.8]{IK}) for a similar argument)
gives a finer classification of highest weights in \cls{R^+}.

\begin{dfn}\label{dfn:K^+}
For a pair $(P,Q)$ of positive integers with $\gcd(P,Q)=1$, we set
\begin{align*}
 K_{P,Q}^{+}:=\{(r,s) \in \bbZ_{\ge 0}^2 \mid r < P,\ s \le Q,\ r Q+s P \le P Q\}.
\end{align*}
For such a pair $(r,s)$ and integers $r,s,i$, we also set 
\begin{align*}
 h_{P,Q;r,s,i} := t^{(r+i Q)/2}q^{-(s+i P)/2} +  t^{-(r+i Q)/2}q^{(s+i P)/2}. 
\end{align*}
\end{dfn}

\begin{lem}
Assume $(q,t,h)$ belongs to \cls{R^+}.
Then there exist a unique pair $(P,Q)$ of coprime positive integers, 
a unique pair $(r,s)$ in $K_{P,Q}^{+}$ and 
a (not necessarily unique) integer $i \in \bbZ$ such that 
$$
 \log|t|/\log|q| = Q/P,\qquad h = h_{P,Q;r,s,i}.
$$
\end{lem}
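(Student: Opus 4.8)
The plan is to treat the two assertions separately: the uniqueness of the coprime pair $(P,Q)$ is essentially a bookkeeping remark, while the existence and uniqueness of $(r,s)\in K_{P,Q}^{+}$ is a lattice-point argument run on top of the description of $D(q,t,h)$ already obtained in this subsection.

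First I would dispose of $(P,Q)$. Since $(q,t,h)$ lies in \cls{R^+} and $|q|,|t|\neq1$, Lemma \ref{lem:class} puts us in case (1)(b): $\log|t|/\log|q|$ is a nonzero rational, and by the very definition of \cls{R^+} it is positive; writing it in lowest terms as $Q/P$ with $P>0$ and $\gcd(P,Q)=1$ produces a unique pair, with $Q>0$ automatically. This simultaneously fixes the linear relation $P\log|t|=Q\log|q|$ and, through \eqref{eq:classR:A}, the integer $A$ with $P\arg t-Q\arg q=2\pi A$, which will govern when two candidate triples collapse to the same $h$.

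For the weight, I would build on the parametrization of $D(q,t,h)$ established above: the positive solutions of $h^2=h_{r,s}^2$ form the two arithmetic progressions $(r'_i,s'_i)$, $(r''_i,s''_i)$, each a coset of $\bbZ\cdot(P,Q)$ by \eqref{eq:classR:+:rs}, lying on the parallel lines $s=rQ/P\mp m$. Passing to all of $\bbZ^2$, the locus $\{(r,s)\in\bbZ^2\mid h=h_{r,s}\}$ is closed under the sign reflection $(r,s)\mapsto(-r,-s)$ (since $h_{r,s}=h_{-r,-s}$) and, by the argument already used in this subsection --- ``if one $(r'_i,s'_i)$ satisfies the second condition in \eqref{eq:classR:h-theta} then all of them do'' --- is a union of full $\bbZ\cdot(P,Q)$-cosets. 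The lemma then amounts to the purely combinatorial claim that $K_{P,Q}^{+}$ is a complete and non-redundant set of representatives for these cosets modulo the sign reflection: every positive solution can be carried, by a translation along the lattice vector appearing in the definition of $h_{P,Q;r,s,i}$ (and possibly the reflection), into $K_{P,Q}^{+}$, and distinct elements of $K_{P,Q}^{+}$ are never so related. This is the ``easy observation'' invoked before Definition \ref{dfn:K^+}, parallel to \cite[Lemma 5.8]{IK}: the division algorithm pins down a unique translation bringing one coordinate into a fixed residue window, and the three inequalities cutting out $K_{P,Q}^{+}$ ($r<P$, $s\le Q$, $rQ+sP\le PQ$) are exactly what is needed to select one representative among the two lines and their reflections; the residual freedom in $i$ records the two progressions coalescing, i.e.\ the parity of $A$ in \eqref{eq:classR:A}.

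The step I expect to be the real obstacle is not the lattice combinatorics but the passage from the equality $h^2=h_{r,s}^2$, which is all that the definition of $D(q,t,h)$ records, to an honest equality $h=h_{r',s'}$ with integer indices. Expanding $(z+z^{-1})^2=(w+w^{-1})^2$ produces four cases $z\in\{w,w^{-1},-w,-w^{-1}\}$, and one must check that the two sign-changing cases can still be absorbed into a shift of the indices $(r,s)$; this is precisely where the standing hypothesis that $q/t$ is not a root of unity, together with the fixed choices of $\arg q$ and $\arg t$ in $[0,2\pi)$, has to be used with care. Once this is cleared, reading off existence and uniqueness of $(r,s)\in K_{P,Q}^{+}$, and the non-uniqueness of $i$, from the coset picture is routine.
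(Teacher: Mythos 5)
The paper offers no written proof of this lemma --- it is presented as the ``easy observation'' preceding Definition \ref{dfn:K^+} --- so your proposal has to stand on its own, and as it stands it is an outline with the hardest step left open. Your treatment of $(P,Q)$ is fine, and your picture of $D(q,t,h)$ as a union of $\bbZ\cdot(P,Q)$-cosets closed under $(r,s)\mapsto(-r,-s)$ is the right one. But two things need repair. First, you conflate two transverse lattice directions: the redundancy inside a single weight $h$ is governed by translation by $(P,Q)$ (this is \eqref{eq:classR:+:rs}), whereas the index $i$ in $h_{P,Q;r,s,i}$ shifts $(r,s)$ by $(Q,P)$ and moves you to a \emph{different} weight $h_i$ of the same family (the ones appearing in the embedding diagrams). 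Saying that every positive solution can be carried into $K_{P,Q}^{+}$ ``by a translation along the lattice vector appearing in the definition of $h_{P,Q;r,s,i}$'' mixes these up; the fundamental-domain claim you actually need is that the set $\{(r+iQ,\,s+iP)\mid (r,s)\in K_{P,Q}^{+},\ i\in\bbZ\}$, together with the sign reflection and the $(P,Q)$-translations, reaches every admissible index pair with the $(r,s)$-component uniquely determined --- and that verification, which is the real content of the lemma, is not carried out.

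Second, the step you yourself single out as ``the real obstacle'' --- upgrading $h^2=h_{r,s}^2$ to an honest equality $h=h_{a,b}$ with integer indices --- is genuinely problematic and you do not resolve it. A direct computation with the fixed branches gives $h_{a+kP,\,b+kQ}=(-1)^{kA}h_{a,b}$, where $A$ is the integer in \eqref{eq:classR:A}. Hence when $A$ is odd the sign in $h=-h_{r_1,s_1}$ can indeed be absorbed by shifting the indices by $(P,Q)$, but when $A$ is even it cannot: the weight $h=-h_{r_1,s_1}$ still satisfies $\# D(q,t,h)\ge 2$, so $(q,t,h)$ lies in \cls{R^+}, yet $h\neq h_{a,b}$ for every $(a,b)\in\bbZ^2$. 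To close this one must either invoke the automorphism $T_n\mapsto -T_n$ to identify $M(h)$ with $M(-h)$ and read the classification up to that sign, or allow $h=\pm h_{P,Q;r,s,i}$ in the conclusion. Your instinct that this is where the choices of $\arg q$ and $\arg t$ must be ``used with care'' is exactly right, but deferring the point leaves the proof incomplete at the only non-routine step.
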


Finally we study the degeneracy of 
the highest weights $\{h_{P,Q;r,s,i} \mid i \in \bbZ \}$.

\begin{dfn}\label{dfn:R^+:cases}
Divide the set $K_{P,Q}^{+}$ as follows.
\begin{description}
\item[Case $1^+$] $0<r<P$ and $0<s<Q$.
\item[Case $2^+$] $r=0$ and $0<s<Q$.
\item[Case $3^+$] $0<r<P$ and $s=0$.
\item[Case $4^+$] $(r,s)=(0,0)$ or $(0,Q)$.
\end{description}
\end{dfn}

\begin{lem}
Write $h_i := h_{P,Q;r,s,i}$ for simplicity.
Then the degeneration of highest weights $\{h_{i} \mid i \in \bbZ \}$ 
is described as follows.
\begin{description}
\item[Case $1^+$] no degeneration.
\item[Case $2^+$] $h_{-i-1}=h_i$ for $i \ge 0$.
\item[Case $3^+$] $h_{2i}=h_{2i-1}$ for $i \in \bbZ$. 
\item[Case $4^+$] 
 $h_{2i}=h_{2i-1}=h_{-2i}=h_{-2i-1}$ for $i \ge 0$ in the case $(r,s)=(0,0)$.
 $h_{2i}=h_{2i-1}=h_{-2i-1}=-h_{-2i-2}$ for $i \ge 0$ in the case $(r,s)=(0,Q)$.
\end{description}
In particular,
the following list exhausts the highest weight $h$ such that $(q,t,h_{i})$ 
belongs to \cls{R^+}.
\begin{description}
\item[Case $1^+$] $h_i$ $(i \in \bbZ)$.
\item[Case $2^+$] $h_i$ $(i \in \bbZ_{\ge0})$.
\item[Case $3^+$] $h_{(-1)^i i}$  $(i \in \bbZ_{\ge0})$. 
\item[Case $4^+$] $h_{2i}$  $(i \in \bbZ_{\ge0})$. 
\end{description}
\end{lem}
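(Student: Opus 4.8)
The plan is to transport to the $q,t$-setting the argument used for the Virasoro Lie algebra in \cite{FF2,IK} (compare \cite[Lemma 5.8]{IK}), feeding in only the identities already assembled in the preceding subsections. The one algebraic input is that the weight attached to a Kac label, $h_{r',s'}=t^{r'/2}q^{-s'/2}+t^{-r'/2}q^{s'/2}$, is a symmetric function of $z:=t^{r'/2}q^{-s'/2}$, namely $z+z^{-1}$, so that $h_{r',s'}=h_{r'',s''}$ holds exactly when $t^{r'/2}q^{-s'/2}\in\{t^{\pm r''/2}q^{\mp s''/2}\}$; in particular $h_{r',s'}=h_{-r',-s'}$.

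First I would translate a coincidence $h_i=h_j$, where $h_i:=h_{P,Q;r,s,i}$ has exponent vector $v_i:=(r+iQ,s+iP)$, into the statement that $v_i-v_j$ or $v_i+v_j$ lies in the ``null lattice'' $\Gamma:=\{(a,b)\in\bbZ^{2}\mid t^{a/2}q^{-b/2}=1\}$. Next I would identify $\Gamma$ from the Class $R^{+}$ data: by \eqref{eq:classR:PQ} and \eqref{eq:classR:A} one has $\log|t|/\log|q|=Q/P$ and $\arg t\equiv\frac{Q}{P}\arg q+2\pi\frac{A}{P}$, and squaring and comparing moduli and arguments shows that $\Gamma$ is a rank-one sublattice of $\bbZ^{2}$, equal to $\bbZ\cdot(P,Q)$ or to $2\bbZ\cdot(P,Q)$ according to the parity of $A$ (equivalently according to whether $t^{P/2}q^{-Q/2}$ is $+1$ or $-1$). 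Since $\gcd(P,Q)=1$ and $P,Q>0$ (and $q/t$ is not a root of unity), the condition $v_i-v_j\in\Gamma$ then forces $i=j$, so every coincidence comes from the single linear condition $v_i+v_j=(2r+(i+j)Q,\,2s+(i+j)P)\in\Gamma$ on $m:=i+j$.

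The heart of the proof is to solve this last condition in each of the four subcases of Definition \ref{dfn:R^+:cases}, using the defining inequalities $r<P$, $s\le Q$, $rQ+sP\le PQ$ of $K^{+}_{P,Q}$ to control the finitely many admissible values of $m$ and to detect the boundary phenomena. In the generic subcase (Case $1^{+}$) the condition on $m$ has no integral solution, giving no degeneration. In the face subcases ($r=0$, $s=0$, $s=Q$) a lattice translate of $v_0$ becomes a fixed point of $v\mapsto-v$ modulo $\Gamma$, and this is precisely what produces the reflection-type identifications $h_{\bullet}=h_{-\bullet-c}$ and, in Case $4^{+}$ with $(r,s)=(0,Q)$, the sign $-$ appearing in the statement. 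Reading off the resulting pattern of coincidences and then invoking the preceding lemma — every Class $R^{+}$ weight is $h_{P,Q;r,s,i}$ for a unique $(P,Q)$, a unique $(r,s)\in K^{+}_{P,Q}$, and some $i\in\bbZ$ — to guarantee that no extra weights enter and that distinct listed representatives give distinct weights, one obtains the asserted lists $\{h_i:i\in\bbZ\}$, $\{h_i:i\ge0\}$, $\{h_{(-1)^ii}:i\ge0\}$, $\{h_{2i}:i\ge0\}$ in the four cases.

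The step I expect to be the main obstacle is this last case analysis: it is elementary but delicate, since one must keep track of several parities at once (of $A$; of $P$ and $Q$, of which $\gcd(P,Q)=1$ only forbids both being even; of $P\pm Q$) and treat each face of $K^{+}_{P,Q}$ separately, all while making sure that the bookkeeping of which indices $i$ survive matches the asserted index sets exactly. A clean organization is to establish Case $1^{+}$ first (no solution, hence no degeneration) and then obtain Cases $2^{+}$--$4^{+}$ by specializing one or both coordinates of $(r,s)$ to the boundary and recording how extra solutions, fixed points, and sign changes appear; the corresponding statement for $\vir$ in \cite[Lemma 5.8]{IK} can be followed essentially line by line.
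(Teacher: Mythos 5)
Your general framework is the right one, and it is the one the paper implicitly intends: the paper states this lemma with no proof at all, offering only the pointer to \cite[Lemma 5.8]{IK}, so reducing $h_i=h_j$ to $z_i=z_j^{\pm1}$ for $z_i:=t^{(r+iQ)/2}q^{-(s+iP)/2}$, computing the null lattice $\Gamma$ from \eqref{eq:classR:PQ}--\eqref{eq:classR:A} as $\bbZ\cdot(P,Q)$ or $2\bbZ\cdot(P,Q)$ according to the parity of $A$, and ruling out $z_i=z_j$ for $i\neq j$ (using that $P=Q=1$ would force $t=q$, excluded) are all correct and are exactly the right preliminary reductions.

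The genuine gap is that the case analysis you defer to the third paragraph is the entire content of the lemma, and your own setup already shows it cannot go through as smoothly as you describe. In your reduction, a coincidence $h_i=\pm h_j$ with $i\neq j$ forces $(2r+mQ,\,2s+mP)\in\bbZ\cdot(P,Q)$ with $m=i+j$, and eliminating the lattice parameter from the modulus condition gives $m(P^2-Q^2)=2(rQ-sP)$; since $P\neq Q$, this pins down a \emph{single} value of $m$, i.e.\ all coincidences for a fixed $(r,s)$ lie on one diagonal $i+j=m_0$. That is consistent with Case $2^+$ (a single diagonal $i+j=-1$), but Cases $3^+$ and $4^+$ of the statement assert coincidences $h_{2i}=h_{2i-1}$ on the diagonals $i+j=4i-1$ for every $i$, which no single linear condition on $i+j$ can produce. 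So either the conventions in Definition \ref{dfn:K^+} must be reconciled with the parametrization \eqref{eq:classR:+:rs} (the exponents $r+iQ$, $s+iP$ versus the increments $(P,Q)$ of the Kac labels), or the coincidences in Cases $3^+$ and $4^+$ arise from a mechanism your single lattice condition does not capture (e.g.\ tracking the full reflection-group orbit of $(r,s)$, as in \cite[\S5.2]{IK}, rather than one equation $v_i+v_j\in\Gamma$). Your sketch never confronts this discrepancy, asserts that ``reading off the resulting pattern'' yields the stated lists without exhibiting a single one of the four patterns, and also does not address the two further points needed for the ``in particular'' clause: the identifications $h_i=-h_j$ in Case $4^+$ require solving $z_iz_j=-1$, which lies outside your $\Gamma$, and exhausting the Class $R^+$ weights requires checking that each listed representative actually has $\#D(q,t,h_i)\ge 2$ (compare the remark in the $R^-$ discussion that $h_0$ there lies in Class $V$). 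As it stands the proposal is a plausible plan, not a proof.
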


\subsubsection{\cls{R^{-}}}
In this subclass,
the integer $Q$ in \eqref{eq:classR:PQ} is negative.
Let us replace $Q$ by $-Q$ (with $Q>0$) hereafter.
and the first condition $s=-r Q/P \mp m$ in \eqref{eq:classR:h-theta} 
has at most finite solutions for $(r,s) \in (\bbZ_{\ge1})^2$.
Let us parametrize these solutions as 
$$
 \{(r_i,s_i) \mid i =1,2,\ldots,\ell \},\qquad
 r_1 s_1 \ge r_2 s_2 \ge r_3 s_3 \ge \cdots \ge r_\ell s_\ell.
$$ 
If $r_i s_i = r_{i+1}s_{i+1}$, then we order them so that $r_i < r_{i+1}$.
See Figure \ref{fig:classR-} for a visual explanation.

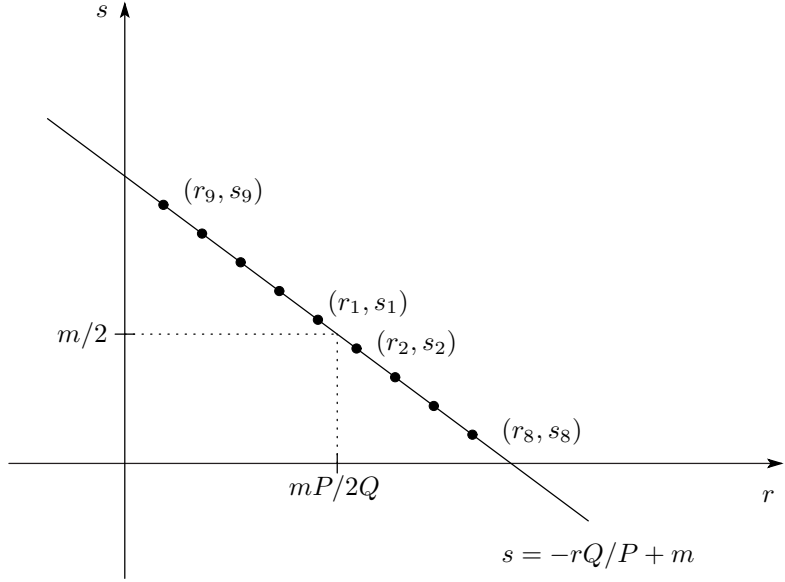
\begin{figure}[htbp]
\centering
{\unitlength 0.1in%
\begin{picture}( 38.0000, 31.0000)(  4.0000,-36.0000)%
\special{pn 8}%
\special{pa  200 3000}%
\special{pa 4200 3000}%
\special{fp}%
\special{sh 1}%
\special{pa 4200 3000}%
\special{pa 4133 2980}%
\special{pa 4147 3000}%
\special{pa 4133 3020}%
\special{pa 4200 3000}%
\special{fp}%
\special{pn 8}%
\special{pa 800 3600}%
\special{pa 800  600}%
\special{fp}%
\special{sh 1}%
\special{pa 800 600}%
\special{pa 780 667}%
\special{pa 800 653}%
\special{pa 820 667}%
\special{pa 800 600}%
\special{fp}%
\special{pn 8}%
\special{pa 1900 3000}%
\special{pa 1900 2325}%
\special{dt 0.045}%
\special{fp}%
\special{pa  800 2325}%
\special{pa 1900 2325}%
\special{dt 0.045}%
\special{fp}%
\special{pn 8}%
\special{pa 1900 2950}%
\special{pa 1900 3050}%
\special{fp}%
\special{pa 750 2325}%
\special{pa 850 2325}%
\special{fp}%
\special{pn 4}%
\special{sh 1}%
\special{ar 1000 1650 24 24 0  6.28318530717959E+0000}%
\special{sh 1}%
\special{ar 1200 1800 24 24 0  6.28318530717959E+0000}%
\special{sh 1}%
\special{ar 1400 1950 24 24 0  6.28318530717959E+0000}%
\special{sh 1}%
\special{ar 1600 2100 24 24 0  6.28318530717959E+0000}%
\special{sh 1}%
\special{ar 1800 2250 24 24 0  6.28318530717959E+0000}%
\special{sh 1}%
\special{ar 2000 2400 24 24 0  6.28318530717959E+0000}%
\special{sh 1}%
\special{ar 2200 2550 24 24 0  6.28318530717959E+0000}%
\special{sh 1}%
\special{ar 2400 2700 24 24 0  6.28318530717959E+0000}%
\special{sh 1}%
\special{ar 2600 2850 24 24 0  6.28318530717959E+0000}%
\put(41.0,-32.00){\makebox(0,0)[lb]{$r$}}%
\put(6.50,-6.700){\makebox(0,0)[lb]{$s$}}%
\put(27.50,-35.50){\makebox(0,0)[lb]{$s=-r Q/P +m$}}%
\put(16.50,-32.00){\makebox(0,0)[lb]{$m P/2 Q$}}%
\put( 4.50,-24.00){\makebox(0,0)[lb]{$m /2$}}%
\put(11.0,-16.5){\makebox(0,0)[lb]{$(r_9,s_9)$}}%
\put(18.5,-22.3){\makebox(0,0)[lb]{$(r_1,s_1)$}}%
\put(21.0,-24.4){\makebox(0,0)[lb]{$(r_2,s_2)$}}%
\put(27.5,-29.0){\makebox(0,0)[lb]{$(r_8,s_8)$}}%
\special{pn 8}%
\special{pa 3200 3300}%
\special{pa 400 1200}%
\special{fp}%
\end{picture}}%
\caption{Parameterizing $D(q,t,h)$ for \cls{R^{-}}.}
\label{fig:classR-}
\end{figure}

By the same argument as in the subclass \cls{R^{+}},
we have 
$$
 D(q,t,h) = \{(r_i,s_i) \mid i=1,2,\ldots,\ell\}.
$$

\begin{dfn}\label{dfn:K^-}
For a pair $(P,Q)$ of positive integers with $\gcd(P,Q)=1$, we set
\begin{align*}
 K_{P,Q}^{-}:=\{(r,s) \in \bbZ^2 \mid 0 \le r < P,\ 0\le s \le Q,\ r Q-s P \le P Q\}.
\end{align*}
For such a pair $(r,s)$ and integers $r,s,i$, we also set 
$$
 h_{P,Q;r,s,i} := t^{(r+i Q)/2}q^{-(s-i P)/2} +  t^{-(r+i Q)/2}q^{(s-i P)/2}. 
$$
\end{dfn}

\begin{lem}
Assume $(q,t,h)$ belongs to \cls{R^-}.
Then there exist a unique pair $(P,Q)$ of coprime positive integers, 
a unique pair $(r,s)$ in $K_{P,Q}^{-}$ and 
a (not necessarily unique) integer $i \in \bbZ$ such that 
$$
 \log|t|/\log|q| = -Q/P,\qquad h = h_{P,Q;r,s,i}.
$$
\end{lem}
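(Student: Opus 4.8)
The plan is to follow the argument given above for \cls{R^+}, which itself mimics \cite[Lemma 5.8]{IK}; the only new input is the explicit description of $D(q,t,h)$ recorded around Figure \ref{fig:classR-}. Uniqueness of $(P,Q)$ is immediate: in \cls{R^-} the quantity $\log|t|/\log|q|$ is a fixed negative rational number, and such a number has a unique expression as $-Q/P$ with $P,Q$ coprime positive integers, which is exactly the pair isolated in \eqref{eq:classR:PQ} after the replacement $Q\mapsto-Q$.

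For existence of $(r,s,i)$, recall that $(q,t,h)\in\cls{R^-}$ forces $D(q,t,h)\neq\emptyset$; by the discussion preceding the statement, $D(q,t,h)=\{(r_i,s_i)\mid i=1,\dots,\ell\}\subset\bbZ_{\ge1}^2$, and by \eqref{eq:classR:h-theta} all these pairs lie on a single line $Q\alpha+P\beta=N$ with $N\in\bbZ_{>0}$. Fix one such pair $(a,b)$. Since $h^2=h_{a,b}^2$ we have $h=\pm h_{a,b}$, and one reduces to the case $h=h_{a,b}$ (in the minus case, when it does not already hold, by the identity $t^{P/2}q^{Q/2}=-1$ valid in the relevant subcase of \eqref{eq:classR:A}, which gives $-h_{a,b}=h_{a+P,\,b-Q}$). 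Now $h=h_{P,Q;r,s,i}=h_{r+iQ,\,s-iP}$ holds as soon as $(r+iQ,s-iP)=(a,b)$, i.e. $r=a-iQ$, $s=b+iP$; since $\gcd(P,Q)=1$ one may choose $i\in\bbZ$ so that $r\in\{0,1,\dots,P-1\}$, and then, using $Qa+Pb=N$ and the defining inequalities, one checks that automatically $0\le s\le Q$ and $rQ-sP\le PQ$, so that $(r,s)\in K_{P,Q}^-$ of Definition \ref{dfn:K^-}.

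Uniqueness of $(r,s)$ amounts to the statement that $K_{P,Q}^-$ is a fundamental domain: if $(r,s),(r',s')\in K_{P,Q}^-$ and $i,i'\in\bbZ$ yield the same $h$, then the pairs $(r+iQ,s-iP)$ and $(r'+i'Q,s'-i'P)$ differ by a composition of the symmetry $(\alpha,\beta)\mapsto(-\alpha,-\beta)$ of the family $h_{\alpha,\beta}$ and (up to sign) the twist $(\alpha,\beta)\mapsto(\alpha+P,\beta-Q)$; tracing this through, the constraints $0\le r,r'<P$ and $0\le s,s'\le Q$ force $(r,s)=(r',s')$. The possible non-uniqueness of $i$ is precisely the phenomenon that $(r,s)$ may lie on the boundary of $K_{P,Q}^-$, in parallel with the case division $1^+$--$4^+$ of Definition \ref{dfn:R^+:cases}. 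I expect the main obstacle to be exactly this fundamental-domain bookkeeping — showing that the shift into $K_{P,Q}^-$ exists and is unique, while keeping track of the $-1$-twist in the $h_{\alpha,\beta}$'s; this is elementary but requires a patient case analysis, which should run as in \cite[Lemma 5.8]{IK}.
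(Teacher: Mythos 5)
The paper itself records this lemma without proof (it is the ``easy observation'' analogous to \cite[Lemma 5.8]{IK} invoked for \cls{R^+}), so your sketch has to stand on its own, and its existence part contains a concrete false step. You fix a single pair $(a,b)\in D(q,t,h)$, set $r=a-iQ$, $s=b+iP$, and claim that ``since $\gcd(P,Q)=1$ one may choose $i$ so that $r\in\{0,1,\dots,P-1\}$.'' But as $i$ varies, $r=a-iQ$ only runs over the arithmetic progression $a+Q\bbZ$, which meets $[0,P)$ if and only if the residue of $a$ modulo $Q$ is at most $P-1$; when $Q>P$ this fails for most $a$, and coprimality of $P$ and $Q$ is irrelevant to this step. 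The missing degree of freedom is the translation along $D(q,t,h)$ itself: the pairs $(\alpha,\beta)$ with $h_{\alpha,\beta}=\pm h$ form (part of) an orbit under $(\alpha,\beta)\mapsto(\alpha+P,\beta-Q)$, and only by combining these shifts of the first coordinate by $P$ with the shifts by $Q$ coming from the index $i$ does $\gcd(P,Q)=1$ give access to every residue, hence to some $r\in[0,P)$. Since your argument never uses more than the one point $(a,b)$, it cannot be repaired without this extra input.

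Two further places need attention even after that repair. First, once $r$ is pinned down, the pair of shifts realizing it is determined only up to a one-parameter family under which $s$ moves by multiples of $P^2-Q^2$; the assertion that $s$ ``automatically'' lands in the prescribed range (note also that the case division of Definition \ref{dfn:R^-:cases} indicates the intended constraint is $0\le -s\le Q$ rather than $0\le s\le Q$ as displayed in Definition \ref{dfn:K^-}) is precisely the fundamental-domain statement to be proved, and cannot be waved through. Second, your reduction from $h=-h_{a,b}$ to $h=h_{a+P,b-Q}$ via $t^{P/2}q^{Q/2}=-1$ covers only the subcase where the integer $A$ of \eqref{eq:classR:A} is odd; when $A$ is even every point of the orbit yields the same sign, and a separate argument is needed there.
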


\begin{dfn}\label{dfn:R^-:cases}
Divide the set $K_{P,Q}^{-}$ as follows.
\begin{description}
\item[Case $1^-$] $0<r<P$ and $0<-s<Q$.
\item[Case $2^-$] $r=0$ and $0<-s<Q$.
\item[Case $3^-$] $0<r<P$ and $s=0$.
\item[Case $4^-$] $(r,s)=(0,0)$ or $(0,-Q)$.
\end{description}
\end{dfn}

\begin{lem}
Write $h_i := h_{P,Q;r,s,i}$ for simplicity.
Then the degeneration of highest weights $\{h_{i} \mid i \in \bbZ \}$ 
is described as follows.
\begin{description}
\item[Case $1^-$] no degeneration.
\item[Case $2^-$] $h_{-i-1}=h_i$ for $i \ge 0$.
\item[Case $3^-$] $h_{2i}=h_{2i-1}$ for $i \in \bbZ$. 
\item[Case $4^-$] 
 $h_{2i}=h_{2i-1}=h_{-2i}=h_{-2i-1}$ for $i \ge 0$ in the case $(r,s)=(0,0)$.
 $h_{2i+1}=h_{2i}=h_{-2i-1}=-h_{-2i-2}$ for $i \ge 0$ in the case $(r,s)=(0,-Q)$.
\end{description}
In particular,
the following list exhausts the highest weight $h$ such that $(q,t,h_{i})$ 
belongs to \cls{R^-}.
\begin{description}
\item[Case $1^-$] $h_i$ $(i \in \bbZ \setminus \{0\})$.
\item[Case $2^-$] $h_i$ $(i \in \bbZ_{>0})$.
\item[Case $3^-$] $h_{(-1)^{i-1} i}$  $(i \in \bbZ_{>0})$. 
\item[Case $4^-$] $h_{2i}$  $(i \in \bbZ_{>0})$. 
\end{description}

\end{lem}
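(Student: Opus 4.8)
The plan is to reproduce, for the multiplicative weights $h_{P,Q;r,s,i}$, the analysis of degeneracies of the Virasoro weights $h'_{r,s}$ carried out in \cite{FF2} and \cite[Chap.~5]{IK}. First I would write $h_i:=h_{P,Q;r,s,i}$ in the form $h_i=x_i+x_i^{-1}$ with $x_i:=t^{(r+iQ)/2}q^{-(s-iP)/2}$, so that the elementary identity $x+x^{-1}=\pm(y+y^{-1})\Longleftrightarrow x\in\{\pm y,\pm y^{-1}\}$ turns each relation $h_i=\pm h_j$ into a multiplicative equation of the shape $t^{a}q^{b}=\pm1$ in integers $a,b$ built out of $r,s,i,j,P,Q$. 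The conditions that put $(q,t,h)$ in \cls{R^-} — case~(1)(b) of Lemma~\ref{lem:class}, together with the $\arg$-relation \eqref{eq:classR:A} — are exactly what is needed to describe the lattice $\Lambda:=\{(a,b)\in\bbZ^2\mid t^{a}q^{b}=1\}$: one checks that it is free of rank one, with a generator that in the normalisation of Definition~\ref{dfn:K^-} can be taken to be $(P,Q)$. Hence each equation $t^{a}q^{b}=\pm1$ is equivalent to a congruence modulo $\Lambda$, possibly twisted by a sign in $\{\pm1\}$; in particular $h_{r,s}$ depends on $(r,s)$ only through a single integer linear combination of $r$ and $s$ (with coefficients $P$ and $Q$), read modulo the relevant sublattice and up to sign.

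Next I would package this geometrically. As $i$ varies, the integer invariant of $h_{P,Q;r,s,i}$ runs along an arithmetic progression, and the distinct values among $\{h_i\}_{i\in\bbZ}$ are the orbits of the infinite dihedral group $W$ generated by the reflection $v\mapsto-v$ (coming from $x\mapsto x^{-1}$) and the translations coming from $\Lambda$ (generated by $t^{P}q^{Q}=1$); for the weaker relations $h_i=-h_j$ one must in addition allow a sign twist $h\mapsto-h$. The region $K_{P,Q}^{-}$ of Definition~\ref{dfn:K^-} is a fundamental domain for $W$, which is precisely why every \cls{R^-} weight is some $h_{P,Q;r,s,i}$ with $(r,s)\in K_{P,Q}^{-}$, i.e. the lemma preceding Definition~\ref{dfn:R^-:cases}. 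The remaining work is then a finite computation of the stabiliser of $(r,s)$ in $W$, organised by where $(r,s)$ sits in $K_{P,Q}^{-}$, precisely as in \cite[Lemma~5.8]{IK}: for an interior point (\textbf{Case }$1^-$) the stabiliser is trivial, so all $h_i$ are distinct; for a point on the wall $r=0$ or the wall $s=0$ (\textbf{Cases }$2^-$ and $3^-$) exactly one reflection of $W$ fixes it, and pulling it back along $i\mapsto(r+iQ,\,s-iP)$ gives the two-to-one identifications $h_{-i-1}=h_i$, respectively $h_{2i}=h_{2i-1}$; for a corner $(0,0)$ or $(0,-Q)$ (\textbf{Case }$4^-$) the stabiliser is dihedral of order four, producing the four-term chains, with the minus sign in $h_{2i+1}=h_{2i}=h_{-2i-1}=-h_{-2i-2}$ coming from the sign twist at $(0,-Q)$. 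Picking one index out of each orbit yields the four lists that exhaust the \cls{R^-} weights.

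The genuinely delicate point is the first step: determining $\Lambda$ and the sign twist, and then keeping careful track of the square-root branches and of the overall sign throughout — this is unavoidable because $h_i=-h_j$ really does occur (at the corner $(0,-Q)$) and must be told apart from $h_i=h_j$. Everything after that is the routine combinatorics of an $\widetilde A_1$-type fundamental-domain argument, formally the same as in \cite[Lemma~5.8]{IK}. I would also treat separately the degenerate small values of $(P,Q)$, such as $(P,Q)=(1,1)$, where $K_{P,Q}^{-}$ collapses onto its corner points and only \textbf{Case }$4^-$ occurs.
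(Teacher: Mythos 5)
Your plan is essentially the paper's own (implicit) argument: the lemma is stated without proof, with only the pointer to \cite[Lemma 5.8]{IK} and the multiplicative criterion \eqref{eq:class:R} standing in for it, and your reduction of $h_i=\pm h_j$ to equations $t^aq^b=\pm1$, the identification of the relation lattice as $\bbZ(P,Q)$ from the \textbf{Class} $R^-$ hypotheses, and the infinite-dihedral stabiliser analysis on the fundamental domain $K^{-}_{P,Q}$ is exactly how that computation goes. When you carry out the ``finite computation'', note that the printed inequalities defining $K^{-}_{P,Q}$ are inconsistent with the sign of $s$ used in Cases $2^-$ and $4^-$, so that normalisation (together with the square-root branches you already flag) must be fixed before identities such as $h_{-i-1}=h_i$ can be verified literally.
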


\begin{rmk}
$(q,t,h_0)$ belongs to \cls{V}.
\end{rmk}

\subsection{Verma module at root of unity}

This subsection is a recollection of the result of \cite{BP},
which investigated  the structure of Verma  module 
in the case $|q|=1$ or $|t|=1$.

Assume $q$ is a primitive $N$-th root of unity.
By \cite[Theorem 4.8]{BP}, the series 
\begin{align}
\label{eq:BP4.25}
\Psi(z) := \lim_{z_i \to z q^{n-i}}
           \prod_{i<j}f(z_j/z_i) T(z_1)T(z_2) \cdots T(z_N).1_{h}
\end{align}
is a well-defined generating series of singular vectors in $M(h)$.
Denote by $M(h)'$ the $\vqt$-module obtained from $M(h)$ 
by dividing out the submodule generated by the singular vectors \eqref{eq:BP4.25}.
Then by \cite[Theorem 4.11]{BP},
the Kac determinant of the module $M(h)'$ is given by
$$
 \Det_n = C_n 
  \prod_{\substack{1\le s\le N-1, \\ r \ge 1,\ r s \le n}}
   (h^2-h_{r,s}^2)^{q_N(r,s;n-r s)}
  \prod_{\substack{r,s \ge 1,\  r s \le n, \\ r \not\equiv 0 \  \text{mod}\, N}}
   \Bigl(\dfrac{1-t^{-r}}{1+p^r}\Bigr)^{p_N(n-r s)}.
$$
Here the functions $p_N$ and $q_N$ are determined by the following generating functions.
\begin{align*}
&\sum_{n \ge 0}p_N(n) x^n = \prod_{n\ge1}\dfrac{1-x^{n N}}{1-x^n},\\
&\sum_{n \ge 0}q_N(r,s;n) x^n = 
 \bigl(1+x^r+\cdots+x^{r(N-1-s)}\bigr) 
 \prod_{n\ge1,\, n \neq r} \bigl(1+x^n+\cdots+x^{n(N-1)}\bigr).
\end{align*}

\begin{fct}[{\cite[Theorem 4.12]{BP}}]
Assume $q = \sqrt[N]{1}$ and $t$ is generic.
Then $M'_h$ is irreducible if  $h^2 \neq h_{r,s}^2$ for 
any $r\ge 1$ and $1 \le s \le N-1$.
\end{fct}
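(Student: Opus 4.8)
The plan is to derive the irreducibility of $M'_h$ straight from the Kac-type determinant formula for $M'_h$ displayed just above, using the general principle that a highest weight $\vqt$-module is irreducible exactly when its contravariant form is non-degenerate on every graded component.

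First I would set up this principle in the present context. Since $M'_h$ lies in $\Ob(\calC)$ and is generated by the image $v$ of $1_h$ with $T_0.v=h v$ and $T_n.v=0$ for $n>0$, it is a highest weight module, and the contravariant form $\langle-,-\rangle$ descends to it (the singular vectors spanning the submodule we quotient by lie in the radical of the form on $M(h)$, as $\vqt^{(n)}$ with $n>0$ annihilates a singular vector). By the invariance $\langle X.u,w\rangle=\langle u,\sigma(X).w\rangle$ the radical $\Rad_{\langle-,-\rangle}$ is a $\vqt$-submodule, and it is proper because $\langle v,v\rangle=1$. Conversely, any proper graded submodule $N$ meets $(M'_h)^{(0)}$ trivially, so for $u\in N^{(n)}$ and any $Y\in\vqt^{(-n)}$ one has $\langle u,Y.v\rangle=\langle\sigma(Y).u,v\rangle=0$ since $\sigma(Y).u\in N^{(0)}=0$; hence $N\subseteq\Rad_{\langle-,-\rangle}$. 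Thus $\Rad_{\langle-,-\rangle}$ is the unique maximal proper submodule, and $M'_h$ is irreducible if and only if the Gram determinant $\Det_n$ of $\langle-,-\rangle$ on $(M'_h)^{(n)}$ is nonzero for every $n\in\bbZ_{\ge 0}$. The problem is thereby reduced to showing $\Det_n\neq 0$ for all $n$.

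Next I would inspect the three factors of the displayed formula for $\Det_n$ one at a time. The constant $C_n\in\bbQ\setminus\{0\}$ is nonzero by definition. In $\prod_{1\le s\le N-1,\ r\ge 1,\ rs\le n}(h^2-h_{r,s}^2)^{q_N(r,s;n-rs)}$ each base is nonzero precisely under the hypothesis that $h^2\neq h_{r,s}^2$ for all $r\ge 1$ and $1\le s\le N-1$, and the exponents $q_N(r,s;n-rs)$ are nonnegative integers, so this factor is nonzero. In $\prod_{r,s\ge 1,\ rs\le n,\ r\not\equiv 0\ \bmod N}\bigl((1-t^{-r})/(1+q^r)\bigr)^{p_N(n-rs)}$ the numerator $1-t^{-r}$ is nonzero because $t$ is generic, so $t^r\neq 1$; the denominator $1+q^r$ is nonzero for $r\not\equiv 0\bmod N$ when $N$ is odd, and for even $N$ the potentially singular factors (those with $r\equiv N/2$) are understood to be absorbed into $C_n$ following the precise normalization of \cite{BP}. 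Hence all three factors are nonzero, $\Det_n\neq 0$ for every $n$, and $M'_h$ is irreducible.

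The substantive input — the validity of the Kac determinant formula for the quotient $M'_h$, together with the fact from \cite[Theorem 4.8]{BP} that $\Psi(z)$ is a well-defined generating series of singular vectors, so that $M'_h$ is the right object to consider — is exactly \cite[Theorems 4.8, 4.11]{BP} and is being assumed. Given that, the only delicate point in the argument above is the bookkeeping of the $q$-dependent constants in the last product in the even-$N$ case; that is where I would be most careful.
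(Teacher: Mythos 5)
The paper does not prove this statement: it appears in a subsection explicitly described as ``a recollection of the result of \cite{BP}'', and the Fact is quoted verbatim from \cite[Theorem~4.12]{BP}, just as the determinant formula above it is quoted from \cite[Theorem~4.11]{BP}. So there is no in-paper proof to compare against. That said, your derivation is the standard and correct one given those two imported inputs: the contravariant form descends to $M'_h$ because the singular vectors generated by $\Psi(z)$ are annihilated by $\vqt^{(+)}$ and hence lie in the radical, the radical is the unique maximal proper graded submodule, and irreducibility is therefore equivalent to $\Det_n\neq 0$ for all $n$, which you check factor by factor. Your reduction is exactly the content that \cite{BP} supplies between its Theorems 4.11 and 4.12, so nothing essential is missing beyond what you already declare as assumed.

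One small correction that actually simplifies your argument: the denominator in the last product is $1+p^r$ with $p=q/t$ (notation carried over from \cite{BP}/\cite{SKAO}, not defined in this paper), not $1+q^r$. Since $t$ is generic, $1+(q/t)^r\neq 0$ for every $r$, so the ``delicate point'' you flag for even $N$ and $r\equiv N/2 \bmod N$ does not arise and no absorption into $C_n$ is needed. With that reading, all three factors of $\Det_n$ are manifestly nonzero under the stated hypotheses, and the argument closes cleanly.
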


\subsection{Embedding diagrams of Verma modules}

Let us illustrate the embedding structure of Verma modules.
We denote a non-trivial homomorphism $M(h) \to M(h')$ by 
\begin{align*}
\xymatrix{  [h] \\  \ar[u]  [h'] }
\end{align*}

\subsubsection{}
For $(q,t,h)$ belonging to \cls{V}, $M(h)$ is irreducible 
so that we have nothing to do.

\subsubsection{}
For $(q,t,h)$ belonging to \cls{I}, 
the Kac determinant must vanish.
If $|q|,|t| \neq 1$, then $h=h_{r,s}$ with some $r,s\in\bbZ_{>0}$.
By the definition of \cls{I}, we have $D(q,t,h) = \{(r,s)\}$.
Then recalling Proposition \ref{prop:hom=1},
we have the following claim. 

\begin{prop}
Suppose $(q,t,h_{r,s})$ with $|q|,|t|\neq 1$ belongs to Class I.
Then we have the following embedding diagram.
\begin{align*}
\xymatrix{  [h] \\  \ar[u]  [h+r s] }
\end{align*}
Moreover the submodule $M(h+r s)$ is irreducible.
\end{prop}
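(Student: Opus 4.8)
The plan is to combine Proposition~\ref{prop:hom=1} with the Jantzen filtration developed in the appendix; throughout I write $h:=h_{r,s}$. First I would invoke Proposition~\ref{prop:hom=1}: since $(q,t,h)$ lies in Class $I$ with $|q|,|t|\neq1$ we have $D(q,t,h)=\{(r,s)\}$, and hence a (unique up to scalar) embedding $\iota\colon M(h+r s)[r s]\hookrightarrow M(h)$ whose image $N$ is the $\vqt$-submodule of $M(h)$ generated by the singular vector $v_{r,s}=S_{r,s}.1_{h}$. Being singular, $v_{r,s}$ is orthogonal to every $T_{-\lambda}.1_{h}$ (move $\sigma(T_{-\lambda})$, a product of positive modes, onto $v_{r,s}$), so $v_{r,s}\in\Rad_{\langle-,-\rangle}=N(h)$ and therefore $N\subseteq N(h)$. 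Moreover $\iota$ is injective, so $N\cong M(h+r s)[r s]$ as $\vqt$-modules and $\dim N^{(n)}=p(n-r s)$ for every $n$.

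Next I would identify $N(h)$ with $N$ using the Jantzen filtration $\{M(h)^{(n)}(d)\}_{d\ge0}$ on each graded piece, formed from the deformation $h\rightsquigarrow h+T$ over $R=\bbC[T]$ with uniformizer $\tau=T$ (as in the appendix, with $\bbF$ specialized to $\bbC$). By part~(3) of the Jantzen lemma, $\sum_{d\ge1}\dim M(h)^{(n)}(d)$ equals the order of vanishing at $h$ of $\Det_n(q,t,\cdot)$; by the Kac determinant formula~\eqref{eq:Kac} and the Class $I$ hypothesis this order is exactly $p(n-r s)$, since the only factor $h^2-h_{a,b}^2$ vanishing at $h_{r,s}$ is the one with $(a,b)=(r,s)$ and it vanishes to first order in $h-h_{r,s}$ (here $h_{r,s}\neq0$, automatic in Class $I$). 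Combining this with $N^{(n)}\subseteq M(h)^{(n)}(1)=\Rad_{\langle-,-\rangle}^{(n)}$ gives
$$p(n-r s)=\dim N^{(n)}\le\dim M(h)^{(n)}(1)\le\sum_{d\ge1}\dim M(h)^{(n)}(d)=p(n-r s),$$
so all the inequalities are equalities: $M(h)^{(n)}(1)=N^{(n)}$ for all $n$, whence $N(h)=\Rad_{\langle-,-\rangle}=N$, and $M(h)^{(n)}(d)=0$ for all $d\ge2$, i.e.\ $M(h)(2)=0$.

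Finally, irreducibility of $N$ follows from $M(h)(2)=0$: by part~(2) of the Jantzen lemma (with $d=1$) the induced symmetric form $(-,-)_1$ on $M(h)(1)/M(h)(2)=N$ is non-degenerate, and it is $\vqt$-contravariant because it is built from the contravariant form $\langle-,-\rangle_{h+T}$ (and $N_R(1)$ is $\vqt$-stable). Since the space of contravariant forms on a Verma module is one-dimensional and the radical of any nonzero such form is its maximal proper submodule, a \emph{non-degenerate} contravariant form on $N\cong M(h+r s)[r s]$ forces that maximal submodule to be $0$; hence $N$, and so the submodule $M(h+r s)$, is irreducible. Combined with $N(h)=N$ this shows that the only proper nonzero $\vqt$-submodule of $M(h)$ is $N\cong M(h+r s)[r s]$, which is precisely the asserted embedding diagram.

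As for where the work really lies: Proposition~\ref{prop:hom=1} supplies the soft part (existence and uniqueness of the arrow $M(h+r s)[r s]\hookrightarrow M(h)$), and the genuinely delicate point is the \emph{irreducibility} of the submodule. This cannot simply be imported from the Virasoro Lie algebra via the functor $\Phi$, which only makes sense in the formal regime $q=e^{\hbar\ve_1}$, $t=e^{\hbar\ve_2}$, $\hbar\to0$, whereas here $q,t$ are complex with $|q|,|t|\neq1$; the Jantzen filtration must therefore be run directly over $\bbC[T]$. The one computation needing care is $\sum_{d\ge1}\dim M(h)^{(n)}(d)=p(n-r s)$, i.e.\ that in Class $I$ no further Kac factors conspire to raise the vanishing order of $\Det_n$ at $h_{r,s}$ — equivalently, that $h_{r,s}\neq0$ and that no spurious multiplicative relation between $q$ and $t$ intervenes, both of which are forced by the Class $I$ hypothesis together with $|q|,|t|\neq1$.
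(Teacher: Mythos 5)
Your proof is correct. For the embedding itself you take exactly the paper's route: Class $I$ gives $D(q,t,h)=\{(r,s)\}$ and Proposition \ref{prop:hom=1} supplies the unique arrow $M(h+rs)[rs]\hookrightarrow M(h)$. Where you genuinely diverge is the second assertion: the paper offers no argument at all for the irreducibility of the submodule (the entire justification in the text is ``recalling Proposition \ref{prop:hom=1}, we have the following claim''), whereas you prove it by running the Jantzen filtration of the appendix against the Kac determinant. Your sandwich $p(n-rs)=\dim N^{(n)}\le\dim M(h)^{(n)}(1)\le\sum_{d\ge1}\dim M(h)^{(n)}(d)=p(n-rs)$ is sound, and it buys more than the statement asks for: it identifies $N(h)$ with the image of the embedding \emph{and} shows $M(h)(2)=0$, after which non-degeneracy of $(-,-)_1$ together with one-dimensionality of the space of contravariant forms on a Verma module forces irreducibility. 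The shorter route the author presumably has in mind is to compute the $T_0$-eigenvalue $h'$ of the singular vector (namely $h_{-r,s}=t^{-r/2}q^{-s/2}+t^{r/2}q^{s/2}$, which is what the symbol $[h+rs]$ abbreviates) and to check via the computation in Lemma \ref{lem:class} that $D(q,t,h')=\emptyset$, so the submodule is a Verma module in \cls{V} and hence irreducible; that argument leans on the classification of weights, while yours is self-contained and correctly avoids the functor $\Phi$, which indeed is unavailable for numerical $q,t$.

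Two small points you should spell out. First, $h_{r,s}\neq0$ in Class $I$ deserves its one-line proof: if $h_{r,s}=0$ then $t^rq^{-s}=-1$, hence $h_{3r,3s}=(t^{r/2}q^{-s/2})^3+(t^{r/2}q^{-s/2})^{-3}=0=h_{r,s}$ and $(3r,3s)\in D(q,t,h)$, contradicting $\#D(q,t,h)=1$; this is what guarantees that the relevant Kac factor vanishes to first order. Second, part (2) of the Jantzen lemma as stated in the paper really gives non-degeneracy of $(-,-)_d$ only on $N_K(d)$ modulo $N_K(d+1)$; since you have already established $M(h)^{(n)}(2)=0$ before invoking it, this costs you nothing, but the order of the steps matters.
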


\subsubsection{}
For $(q,t,h)$ belonging to \cls{R^{+}},
we have $h=h_{P,Q;r,s,i}$ for some pair $(P,Q)$ of coprime positive integers,
$(r,s) \in K^+_{P,Q}$ and $i\in\bbZ$ (see Definition \ref{dfn:K^+}).

\begin{prop}
Assume $(q,t,h)$ belong ins to \cls{R^+}.
Then for each \textbf{Case} (see Definition \ref{dfn:R^+:cases}), 
$M(h)$ has the following commutative embedding diagram.
\begin{align*}
\xymatrix{
 & 1^+   &          & 2^+   & 3^+     & 4^+ 
\\
 & [h_0] &          & [h_0] & [h_0]   & [h_0] 
\\
   [h_{-1}] \ar[ur] &       & 
   [h_{1}]  \ar[ul] & [h_1] \ar[u]    & [h_1] \ar[u] & [h_2] \ar[u]
\\
   [h_{-2}] \ar[urr]  \ar[u]& & [h_{2}] \ar[ull]|!{[ll];[u]}\hole \ar[u]
 & [h_2]    \ar[u]  & [h_{-2}] \ar[u] & [h_4] \ar[u]
\\
   [h_{-3}] \ar[urr]  \ar[u]& & [h_{3}] \ar[ull]|!{[ll];[u]}\hole \ar[u]
 & [h_3]    \ar[u]  & [h_3] \ar[u]    & [h_6] \ar[u]
\\
 \ar@{-->}[urr] \ar@{-->}[u]& & \ar@{-->}[ull]|!{[ll];[u]}\hole \ar@{-->}[u]
 & \ar@{-->}[u]     & \ar@{-->}[u]    & \ar@{-->}[u]
}
\end{align*}
\end{prop}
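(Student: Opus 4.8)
The plan is to derive the statement from the corresponding result for the Virasoro Lie algebra $\vir$ by transporting everything along the degeneration functor $\Phi$ of \eqref{eq:functor_phi}. The embedding diagrams of Verma modules of $\vir$ in the Feigin--Fuchs classification are known (see \cite{FF1,FF2,IK}), so the task reduces to showing that the diagram for $M(h)$ over $\vir_{q,t}$ is the $\Phi$-image of the diagram for $\Phi(M(h))$ over $\vir$, and then to matching the case division of Definition \ref{dfn:R^+:cases} with the one for $\vir$.

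First I would set up the transfer. By the lemma of \S\ref{sec:verma}, $\Phi(M(h))$ is again a Verma module of $\vir$; $\Phi$ preserves the $\bbZ$-grading and, by Lemma \ref{lem:sing:deg}, sends singular vectors to singular vectors. Moreover $\Phi$ does not change the underlying graded vector space, and the identification $\Phi(M(h))\cong M'(c,h')$, namely the map $\Lim_M$ of \eqref{eq:lim_M}, is a rescaling bijection of graded pieces which by the computation in \S\ref{sec:sing} carries the normalized singular vector to a \emph{nonzero} multiple of the Virasoro one. Hence any nonzero $\vir_{q,t}$-homomorphism $M(h')\to M(h)$ goes to a nonzero $\vir$-homomorphism $\Phi(M(h'))\to\Phi(M(h))$; since underlying spaces are unchanged, such a homomorphism is injective iff its $\Phi$-image is, and because nonzero homomorphisms between Virasoro Verma modules are injective (Feigin--Fuchs), the same holds for $\vir_{q,t}$, so composites of nonzero Verma homomorphisms of $\vir_{q,t}$ are again nonzero.

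Next I would pin down which arrows occur. Combining Lemma \ref{lem:hom<=1} ($\dim\Hom\le1$), Proposition \ref{prop:hom=1} (the embedding $M(h_{r,s}+r s)[r s]\hookrightarrow M(h_{r,s})$ exists whenever $h=h_{r,s}$), and the fact that under $q=e^{\hbar\ve_1}$, $t=e^{\hbar\ve_2}$, $\hbar\to0$ the Kac formula \eqref{eq:Kac} degenerates, up to nonzero constants, to \eqref{eq:Kac'} (so $\Det_n$ and $\Det'_n$ have matching vanishing loci), one sees that a singular vector occurs in $M(h)^{(n)}$ exactly when one occurs at level $n$ in $\Phi(M(h))$. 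Thus the vertices and edges of the $\vir_{q,t}$ diagram are precisely those of the $\vir$ diagram for $\Phi(M(h))$, and commutativity transfers since $\Phi$ is a functor, is injective on the at most one-dimensional Hom spaces, and all composites in sight are nonzero by the previous step. Finally, using \eqref{eq:hrst}--\eqref{eq:ct}, the parametrization $h=h_{P,Q;r,s,i}$ of \cls{R^+} and the degeneracy lemma for $\{h_i\}$, the four cases $1^+$--$4^+$ of Definition \ref{dfn:R^+:cases} match the corresponding Feigin--Fuchs cases in \cite[Chap.~5]{IK}, producing the four displayed diagrams.

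The main obstacle is the middle step: making precise that $\Phi$ identifies the two diagrams, i.e.\ that every arrow of the Virasoro picture lifts and that no spurious arrow appears upstairs. This requires no new computation, but it rests on two inputs that must be handled with care — the equality of the degenerate Kac determinant with the Virasoro Kac determinant (to get exactly the right set of vertices) and the injectivity of nonzero Verma homomorphisms (to get commutativity and completeness of the diagram) — together with the bookkeeping that translates Definition \ref{dfn:R^+:cases} into the cases of \cite{IK}.
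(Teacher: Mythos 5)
Your endpoint is right --- the diagram does have the same shape as the Feigin--Fuchs diagram, existence of each arrow does come from Proposition \ref{prop:hom=1}, and commutativity does follow from the one-dimensionality of the Hom spaces --- and this last part coincides with the paper's actual argument, which mimics \cite[\S 5.3.2]{IK} and obtains commutativity by rescaling $\iota_{-1,2}$ so that $\iota_{0,-1}\circ\iota_{-1,2}=\iota_{0,1}\circ\iota_{1,2}$. But your central mechanism, transporting the diagram along the degeneration functor $\Phi$, has a genuine gap. The proposition lives in the appendix, where $q,t$ are \emph{fixed complex numbers} with $|q|,|t|\neq 1$, $\log|t|/\log|q|=Q/P\in\bbQ$ and $\arg t-(Q/P)\arg q\in 2\pi\bbQ$. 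The functor $\Phi$ of \eqref{eq:functor_phi} is the limit $\hbar\to 0$ along $q=e^{\hbar\ve_1}$, $t=e^{\hbar\ve_2}$; it is not defined at such a fixed point of {\bf Class} $R^+$, so there is no Virasoro Verma module ``$\Phi(M(h))$'' from which to pull the diagram back. More seriously, the vertex set of the diagram is $D(q,t,h)$, which by \eqref{eq:class:R} is cut out by a modulus condition \emph{and} an argument condition modulo $2\pi$; the multiplicative arithmetic governing it (the integers $P,Q,A$ of \eqref{eq:classR:PQ}--\eqref{eq:classR:A}, the two families $(r'_i,s'_i)$, $(r''_i,s''_i)$, and the degeneracies that distinguish Cases $1^+$--$4^+$) has no counterpart on the Virasoro side and cannot be recovered from any matching of degenerate Kac determinants. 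The claim that ``a singular vector occurs in $M(h)^{(n)}$ exactly when one occurs at level $n$ in $\Phi(M(h))$'' is therefore not available; the paper instead determines $D(q,t,h)$ by the direct computations in the preceding lemmas of the appendix.

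The repair is to drop the transfer and argue entirely inside $\vqt$, as the paper does: the classification lemmas give, for each $i$, the set $D(q,t,h_i)$ and hence which levels carry singular vectors; Proposition \ref{prop:hom=1} produces the embedding $\iota_{i,j}\colon M(h_i)\hookrightarrow M(h_j)$ for $|i|=|j|-1$, unique up to scalar; and each square commutes because both composites lie in a Hom space of dimension at most one (Lemma \ref{lem:hom<=1}) and are nonzero, so they agree after rescaling one arrow. Note also that the nonvanishing and injectivity of composites should be established internally (e.g.\ from the PBW basis of Lemma \ref{lem:pbw} and the absence of zero divisors in $\vqt^{(-)}$), rather than imported from the Virasoro case through $\Phi$ as you propose.
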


\begin{proof}
The proof is quite similar to the Virasoro Lie algebra case, so we omit the detail.
See \cite[\S5.3.2]{IK} for the case of the Virasoro Lie algebra.
The key tool is  Proposition \ref{prop:hom=1},
by which we know that there is an embedding $\iota_{i,j}: M(h_i) \hookrightarrow M(h_j)$ 
for $i,j$ with $|i|=|j|-1$.
For the commutativity of the diagram in the \textbf{Case} $1^+$,
we note $\iota_{0,-1}\circ\iota_{-1.2}(1_{h_2}) \propto  \iota_{0,1}\circ\iota_{1.2}(1_{h_2})$
by the same Proposition.
Multiplying a scalar factor to $\iota_{-1,2}$, 
we have the commutativity $\iota_{0,-1}\circ\iota_{-1.2} =  \iota_{0,1}\circ\iota_{1.2}$.
Commutativity of the other parts of the diagram can be shown similarly.
\end{proof}

\subsubsection{}
For $(q,t,h)$ belonging to \cls{R^{-}},
we have $h=h_{P,Q;r,s,i}$ for some pair $(P,Q)$ of coprime positive integers,
$(r,s) \in K^{-}_{P,Q}$ and $i\in\bbZ$ (see Definition \ref{dfn:K^-}).

\begin{prop}
Assume $(q,t,h)$ belong ins to \cls{R^-}.
Then for each \textbf{Case} (see Definition \ref{dfn:R^-:cases}), 
$M(h)$ has the following commutative embedding diagram.
\begin{align*}
\xymatrix{
 & 1^-   &          & 2^-   & 3^-        & 4^- 
\\
&&&&&
\\
   [h_{-3}] \ar@{-->}[urr]  \ar@{-->}[u] & 
 & [h_{3}]  \ar@{-->}[ull]|!{[ll];[u]}\hole   \ar@{-->}[u]
 & [h_3]    \ar@{-->}[u]    & [h_3] \ar@{-->}[u]  & [h_6] \ar@{-->}[u]
\\
   [h_{-2}] \ar[urr]  \ar[u]& & [h_{2}] \ar[ull]|!{[ll];[u]}\hole \ar[u]
 & [h_2]    \ar[u]  & [h_{-2}] \ar[u]    & [h_4] \ar[u]
\\
   [h_{-1}] \ar[urr] \ar[u] & & [h_{1}]  \ar[ull]|!{[ll];[u]}\hole \ar[u]
 & [h_1]    \ar[u]  & [h_1] \ar[u]       & [h_2] \ar[u]
\\
 & [h_0]    \ar[ur] \ar[ul] & 
 & [h_0]    \ar[u] & [h_0] \ar[u]        & [h_0] \ar[u]
}
\end{align*}
\end{prop}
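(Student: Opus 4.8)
The plan is to run the argument used for the preceding \cls{R^+} proposition essentially verbatim; it follows the Virasoro Lie algebra treatment of \cite[\S5.3.2]{IK}. The two inputs particular to $\vqt$ are already in place: Proposition \ref{prop:hom=1}, which for $|q|,|t|\neq 1$ and $h=h_{r,s}$ ($r,s\in\bbZ_{>0}$) supplies the embedding $M(h_{r,s}+r s)[r s]\hookrightarrow M(h_{r,s})$ carrying $1_{h_{r,s}+r s}$ to $S_{r,s}.1_{h_{r,s}}$; and Lemma \ref{lem:hom<=1}, which gives $\dim\Hom_{\vqt}(M(h_a),M(h_b))\le 1$ for all $h_a,h_b$. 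On top of these one uses the combinatorics of $D(q,t,h)$ for \cls{R^-} worked out above (Figure \ref{fig:classR-}, Definition \ref{dfn:K^-}, Definition \ref{dfn:R^-:cases} and the attendant lemma on the degeneracy of $\{h_i\}$).

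First I would, for each $i\in\bbZ$, read $D(q,t,h_i)$ off the parametrization and record which pairs $(i,j)$ satisfy $h_j=h_i+r s$ for some $(r,s)\in D(q,t,h_i)$: by Proposition \ref{prop:hom=1} each such pair produces a nonzero map between the corresponding shifted Verma modules, i.e.\ one arrow of the diagram. Then I would insert the weight identifications of \textbf{Case}~$1^-$ through \textbf{Case}~$4^-$ (so that, e.g., in \textbf{Case}~$3^-$ the modules attached to $h_{2i}$ and $h_{2i-1}$ coincide, and in \textbf{Case}~$4^-$ one obtains the stated reflection at $h_0$); this collapses the generic two-strand picture into the displayed shape. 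The diagram is semi-infinite, ascending from $[h_0]$ at the foot, which by the Remark is a \cls{V}-weight, so $M(h_0)$ is irreducible and no arrow enters it from below. That no further arrows occur is forced as in \emph{loc.\ cit.}: a nonzero homomorphism between Verma modules sends the highest weight vector to a singular vector, hence to a grade where the Kac determinant \eqref{eq:Kac} vanishes, and by the analysis above this only happens for the recorded pairs.

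For commutativity, any two directed paths between two nodes are composites of the embeddings above, hence elements of a $\Hom_{\vqt}$ space of dimension at most one by Lemma \ref{lem:hom<=1}; so the two composites are proportional. Fixing the outermost embeddings first and rescaling each remaining embedding by a suitable nonzero scalar in $\bbF$ then makes every elementary square commute --- in particular the diamond above $[h_0]$ in \textbf{Case}~$1^-$, where the composites $M(h_0)\hookrightarrow M(h_{-1})\hookrightarrow M(h_2)$ and $M(h_0)\hookrightarrow M(h_1)\hookrightarrow M(h_2)$ are rendered equal --- exactly as in the \cls{R^+} case. I expect the main obstacle to be purely combinatorial bookkeeping: one must check \textbf{Case} by \textbf{Case} that $D(q,t,h_i)$ together with the weight identifications reproduces precisely the pattern of solid and dashed arrows, including the crossing arrows between the two strands, and that the identifications never conflate genuinely distinct $M(h_i)$. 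Since each step parallels \cite[\S5.3.2]{IK} once Proposition \ref{prop:hom=1} and Lemma \ref{lem:hom<=1} are available, I would present this as a sketch and refer to \emph{loc.\ cit.} for the remaining verifications.
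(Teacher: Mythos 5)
Your proposal is correct and takes essentially the same route as the paper: the paper proves only the analogous \cls{R^+} proposition and leaves the \cls{R^-} case as identical, and that proof is exactly your argument --- each arrow supplied by Proposition \ref{prop:hom=1}, the pattern of arrows read off from the parametrization of $D(q,t,h_i)$ and the degeneracy lemmas, and commutativity obtained by rescaling the embeddings using the one-dimensionality of the relevant $\Hom_{\vqt}$ spaces (Lemma \ref{lem:hom<=1}), with the remaining bookkeeping deferred to \cite[\S 5.3.2]{IK}. No gaps to report.
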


\section{Some explicit formula of the bosonized singular vector}

In this appendix we give another way to calculate the normalization factor formula
(Theorem \ref{thm:sing:norm}) in the special case $q=1$.
In this case $\vir_{1,t}$ becomes a commutative algebra 
and gets extreamly easy to treat.

In the case $q=1$ the bosonization \eqref{eq:qvir:bosonization} becomes 
\begin{align*}
T(z) =
 \exp\Bigl(\sum_{n\ge1}
            \dfrac{1-t^{-n}}{1+t^{-n}} \dfrac{p_{n}}{n} (t^{1/2}z)^{n}
      \Bigr) t^{-1/2} k
+ \exp\Bigl(- \sum_{n\ge1}
            \dfrac{1-t^{-n}}{1+t^{-n}} \dfrac{p_{n}}{n} (t^{-1/2}z)^{n}
      \Bigr) t^{1/2} k^{-1}.
\end{align*}
Here we replaced the Heisenberg generator $a_{-n}$ 
by the power sum symmetric function $p_n$,
so that the expression above is considered as 
operators acting on the space $\Lambda_{\bbQ(t)}$.
The variable $k$ is the replacement of $q^{a_0}$,
which will become $t^{-(r+1)/2}$ on the Verma module $M(h_{r,s})$.

Let $\lambda = (\lambda_1,\lambda_2,\ldots)$ be a partition.
We want to calculate 
$T_{-\lambda} = T_{-\lambda_1} T_{-\lambda_2} \cdots$ explicitly.
For $n \in \bbZ_{\le0}$ define $T^{\pm}_n$ by 
$$
 \exp\Bigl(\pm \sum_{n\ge1}
            \dfrac{1-t^{-n}}{1+t^{-n}} \dfrac{p_{n}}{n} (t^{\pm 1/2}z)^{n}
      \Bigr) t^{\mp 1/2} = \sum_{n\ge 0} T^{\pm}_{-n} z^{n},
$$
Since 
$\exp(\sum_{n\in\bbZ_{\ge1}}z^n p_n/n) 
=\sum_{n\in\bbZ_{\ge0}}z^{\lambda} p_\lambda /z_\lambda$ 
(see \eqref{eq:z_lambda} for the definition of $z_\lambda$),
we have 
$$
 T^\pm_{-n} = t^{\pm (n-1)/2} 
  \sum_{\lambda\,\vdash n} \dfrac{p_\lambda}{z^{\pm}_\lambda(t)},
 \quad
 z^{\pm}_\lambda(t) := \prod_{i\ge1} m_i(\lambda)! 
 \left(\pm i \cdot \dfrac{1+t^{-i}}{1-t^{-i}}\right)^{m_i(\lambda)}
$$
For a finite subset $J \in \bbZ$, 
define $T^{\pm}_J := \sum_{j \in J} T^{\pm}_{-j}$.
Then a direct calculation gives 
$$
 T_{-\lambda} = 
 \sum_{d=0}^{\ell(\lambda)} k^{\ell(\lambda)-2 d}
 \sum_{I \subset [1,\ell(\lambda)],\, \# I = d}
 T^+_{I^c} T^-_{I},
$$
where the summation is taken over the all subset $I$ of 
$[1,\ell(\lambda)] := \{1,2,\ldots,\ell(\lambda)\}$.  
We also set $I^c := [1,\ell(\lambda)] \setminus I$.
Noticing that $T^+_{-n}$ and $T^-_{-n}$ only differ 
at the coefficients $z^{\pm}_\mu (t)$, 
we can collect $p_\mu$'s in the above summation.
The result is 
\begin{align}\label{eq:Ttop}
 T_{-\lambda} = 
 t^{(|\lambda|-\ell(\lambda))/2} 
 \sum_{\overline{\nu} \, \vdash \lambda} \dfrac{p_{|\overline{\nu}|}}{z^+_{|\overline{\nu}|}(t)} 
 \sum_{I \subset [1,\ell(\lambda)]} k^{\ell(\lambda)-2 \# I}
 \prod_{i \in I}  (-1)^{\ell(\overline{\nu}^i)} t^{-(\lambda_i-1)}.
\end{align}
Here the running index $\overline{\nu}$ consists of $\ell(\lambda)$ partitions 
$\overline{\nu}=(\overline{\nu}^1,\overline{\nu}^2,\ldots,\overline{\nu}^{\ell(\lambda)})$ 
with $\overline{\nu}^i \vdash \lambda_i$ for each $i$.
The symbol $|\overline{\nu}|$ means the union of the partitions 
$\overline{\nu}^1 \cup \overline{\nu}^2 \cup \cdots \cup \overline{\nu}^{\ell(\lambda)}$,
which itself is a partition of $|\lambda|$. 

Now we want to expand the Macdonald symmetric function 
$J_{(s^r)}(q=1,t)$ in terms of 
$\{T_{-\lambda}.1_{h_{r,s}} \mid \lambda \, \vdash r s\}$.
By \cite[Chap.VI, \S2 (2.14), \S8 (8.6)]{M:1995} 
we have 
\begin{align}\label{eq:Je:t=1}
 J_\lambda(1,t) = e_{\lambda'} \cdot \prod_{s \in \lambda}(1-t^{\ell_\lambda(s)+1}).
\end{align}
Here $e_n$ is the $n$-th elementary symmetric function, 
and for a partition $\mu$ we set $e_{\mu} := e_{\mu_1} e_{\mu_2} \cdots$.
Thus to calculate the normalization factor,
it is enough to determined the matrix element $M(e,T)_{(s^r),(1^{r s})}$
of the transition matrix $M(e,T)$ whose elements appear in the expansion 
$$
 e_\lambda =  \sum_{\mu\,\vdash |\lambda|} 
 M(e,T)_{\lambda,\mu} \cdot T_{-\mu}.1_{h_{r,s}}. 
$$

The transition matrix $M(e,T)$ has a factorization 
$M(e,T) = M(e,p)M(T,p)^{-1}$,
where $M(e,p)$ is the matrix appearing in 
$$
 e_\lambda =  \sum_{\mu \,\vdash |\lambda|} 
 M(e,p)_{\lambda,\mu} \cdot p_{\mu},
$$ 
and $M(T,p)$ is the one in 
$$
 T_{-\lambda}.1_{h_{r,s}} =  \sum_{\mu\,\vdash |\lambda|} 
 M(T,p)_{\lambda,\mu} \cdot p_\mu. 
$$
By \cite[Chap.I \S6]{M:1995} we know 
\begin{align}\label{eq:Mep}
M(e,p) = L' Z^{-1} \ve,
\end{align}
where $L'$ is the transposed matrix of $(L_{\lambda,\mu})$ with 
$$
 L_{\lambda,\mu} = 
 \#\bigl\{f:[1,\ell(\lambda)] \to \bbZ \mid 
   \forall \, i \ \mu_i = \textstyle{\sum_{i=f(j)}} \lambda_j  \bigr\},
$$
and $Z=\diag(z_\lambda)$ (resp.\ $\ve = \diag(\ve_\lambda)$) 
is the diagonal matrix whose elements are given by 
$z_\lambda$ (see \eqref{eq:z_lambda} for the definition) 
(resp.\ $\ve_\lambda := (-1)^{|\lambda|-\ell(\lambda)}$).
The matrix element $M(T,p)_{\lambda,\mu}$ can be read from \eqref{eq:Ttop}.
The result is 
\begin{align}\label{eq:MTp}
 M(T,p)_{\lambda,\mu} = 
 t^{(|\lambda|+\ell(\lambda) r)/2}
 \sum_{\overline{\mu} \, \vdash \lambda} \prod_{i=1}^{\ell(\lambda)}
 \dfrac{1+t^{-r}\prod_{j=1}^{\ell(\overline{\mu}^i)}(-t^{-\overline{\mu}^i_j})}
 { z^+_{\overline{\mu}^i}(t) }.
\end{align}

Finally, by a direct calculation using \eqref{eq:Mep} and \eqref{eq:MTp},
we get 
\begin{align*}
M(e,T)_{(s^r),(1^{r s})}
=\prod_{i=1}^{r}\left(\dfrac{t^i}{(1-t^i)^2}\right)^s.
\end{align*}
Using \eqref{eq:Je:t=1} we have
$$
 v_{r,s} = J_{(s^r)}(1,t) \cdot \prod_{i=1}^{r}\left(\dfrac{1-t^i}{t^i}\right)^s,
$$ 
which is the desired result.


\end{document}